\documentclass[12pt]{article} 

\usepackage[margin=1in]{geometry}
\usepackage{amsfonts,latexsym,amsthm,amssymb,amsmath,amscd,euscript,esint, dsfont}	
\usepackage{graphicx}		
\usepackage{hyperref}
\usepackage{cases}			
\usepackage{textcomp, gensymb}
\usepackage{xcolor}
\usepackage{tabularx}

\usepackage{quiver,tikz}

\usepackage[margin=10pt,font=small,labelfont=bf]{caption}

\makeatletter
\def\namedlabel#1#2{\begingroup
	\def\@currentlabel{#2}
	\label{#1}\endgroup
}
\makeatother

\usepackage[parfill]{parskip}
\usepackage{lastpage}
\usepackage{fancyhdr}

\newcommand{\EE}{\mathbb{E}}

\newcommand{\HH}{\mathbb{H}}

\newcommand{\QQ}{\mathbb{Q}}
\newcommand{\RR}{\mathbb{R}}

\newcommand{\TT}{\mathbb{T}}

\newcommand{\ZZ}{\mathbb{Z}}

\newcommand{\cS}{\mathcal{S}}
\newcommand{\cT}{\mathcal{T}}

\newcommand{\ks}{\mathfrak{s}}

\newcommand{\Deck}{\operatorname{Deck}}

\newcommand{\Id}{\operatorname{Id}}

\newcommand{\Aut}{\operatorname{Aut}}

\newcommand{\Out}{\operatorname{Out}}

\newcommand{\GL}{\operatorname{GL}}

\newcommand{\SO}{\operatorname{SO}}

\newcommand{\Diff}{\operatorname{Diff}}
\newcommand{\Isom}{\operatorname{Isom}}

\newcommand{\orb}{\text{orb}}

\newcommand{\Orb}{\operatorname{Orb}}

\newcommand{\ch}{\operatorname{ch}}

\newtheorem{theorem}{Theorem}[section]
\newtheorem{corollary}[theorem]{Corollary}
\newtheorem{lemma}[theorem]{Lemma}

\newtheorem{proposition}[theorem]{Proposition}

\newtheorem{conjecture}[theorem]{Conjecture}

\theoremstyle{definition}

\newtheorem{example}[theorem]{Example}
\newtheorem{remark}[theorem]{Remark}
\newtheorem{problem}[theorem]{Problem}

\usepackage{hyperref}
\hypersetup{
    colorlinks=true,
    linkcolor=blue,
    filecolor=magenta,      
    urlcolor=cyan,
    pdftitle={Overleaf Example},
    pdfpagemode=FullScreen,
}

\usepackage[shortlabels]{enumitem}

\usepackage[
backend=bibtex,
style=alphabetic,
sorting=nyt
]{biblatex}
\graphicspath{ {./figure/} }

\title{Isometric free finite group actions on non-positively curved 3-manifolds}
\author{Zhengyu Zou}
\date{July 2026}

\begin{document}

\maketitle

\begin{abstract}
    Let $M$ be a closed orientable $3$-manifold admitting a metric of nonpositive sectional curvature (an NPC metric), and let $G$ be a finite group acting freely on $M$ by orientation-preserving diffeomorphisms. Previous results showed that $M$ admits a $G$-invariant NPC metric except possibly when $M$ is a graph manifold. In this paper, we resolve the remaining case by proving that $M$ also admits a $G$-invariant NPC metric when $M$ is a graph manifold. This result advances our understanding in dimension $3$ of the question posed by Schoen--Yau about Nielsen realization for NPC $3$-manifolds.
\end{abstract}

\section{Introduction} \label{sec:intro}

Riemannian metrics of nonpositive sectional curvature (NPC metrics) are rigid enough to impose strong topological restrictions on $3$-manifolds, yet flexible enough to occur on many closed $3$-manifolds beyond the locally homogeneous setting. Schoen--Yau \cite{schoen-yau-1979-compact-group-actions} proposed the following analog of the Nielsen realization problem for manifolds admitting NPC metrics.

\medskip

\begin{problem}[Schoen--Yau] \label{prob:Schoen-Yau}
    Let $M$ be a compact manifold admitting an NPC metric, and let $G$ be a finite subgroup of $\Out(\pi_1(M))$. Does there exist an NPC metric on $M$ such that $G$ is realized by a finite subgroup of $\Isom(M)$?
\end{problem}

There has been partial progress on Problem~\ref{prob:Schoen-Yau}. In dimension $2$, it is answered by Kerckhoff's solution to the classical Nielsen realization problem \cite{Kerckhoff1983Nielsen}. When $M$ is hyperbolic and $\dim M \geq 3$, Problem~\ref{prob:Schoen-Yau} is answered by Mostow rigidity. On the other hand, in dimension $n \geq 7$, Farrell--Jones \cite[Theorem~1]{FarrellJones1990} constructed exotic smooth structures on closed manifolds $M^n$ that are homeomorphic to hyperbolic manifolds. For these exotic smooth structures, the natural map $\Diff(M^n) \rightarrow \Out(\pi_1(M^n))$ is not surjective, even though $M^n$ admits metrics of negative sectional curvature. Thus Problem~\ref{prob:Schoen-Yau} is false in dimensions at least $7$. 

Our work is motivated by the dimension-$3$ case of Problem~\ref{prob:Schoen-Yau}. Earlier work of Zimmermann showed that, when $M^3$ is Haken and not Seifert fibered, every finite subgroup of $\Out(\pi_1(M))$ can be lifted to $\Diff(M)$ \cite[Satz~0.1]{Zimmermann1982}. However, when $M$ is Seifert fibered, Heil--Tollefson found counterexamples to Problem~\ref{prob:Schoen-Yau} by analyzing obstructions to realizing extensions of finite subgroups of $\Out(\pi_1(M))$ by $\pi_1(M)$ \cite[Example~2.8]{HeilTollefson1978}. Now let $M$ be a closed, orientable $3$-manifold admitting an NPC metric. Then $M$ is irreducible. If the JSJ decomposition of $M$ contains more than one component, then $M$ contains an incompressible embedded torus; hence $M$ is Haken and is not Seifert fibered. In this setting, Problem~\ref{prob:Schoen-Yau} and Zimmermann's smooth realization result motivate the following geometric realization problem.

\medskip

\begin{problem}\label{prob:npc_isometric}
Let $M$ be a closed, orientable $3$-manifold admitting an NPC metric, and let $G$ be a finite group acting on $M$ by orientation-preserving diffeomorphisms. Does $M$ admit a $G$-invariant NPC metric?
\end{problem}

In this paper, we study Problem~\ref{prob:npc_isometric} under the additional assumption that $G$ acts freely on $M$. Under this assumption, the problem is already known to have a positive answer when $M$ is Seifert fibered \cite[Theorem~2.1]{MeeksScott1986FiniteActions3Manifolds}, when $M$ is atoroidal but not Seifert fibered \cite{Perelman2002,Perelman2003Surgery,Perelman2003Extinction,DinkelbachLeeb2009}, and more generally when the JSJ decomposition of $M$ contains at least one atoroidal piece \cite[Theorem~3.3]{Leeb1995NPC3Manifolds}. The remaining case is therefore the case of \emph{graph manifolds}, namely closed $3$-manifolds whose JSJ pieces are all Seifert fibered. This case is genuinely subtle: even if each individual Seifert fibered piece admits an NPC metric, the resulting graph manifold need not admit one. Our main result addresses precisely this remaining case.

\medskip

\begin{theorem}\label{thm:npc_metric_free_action_invariant}
Let $M$ be a closed, orientable graph manifold admitting an NPC metric, and let $G$ be a finite group acting freely on $M$ by orientation-preserving diffeomorphisms. Then the quotient manifold $M/G$ admits an NPC metric. In particular, $M$ admits a $G$-invariant NPC metric obtained by pulling back an NPC metric on $M/G$.
\end{theorem}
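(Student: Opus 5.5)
Since $G$ acts freely and preserves orientation, $N := M/G$ is a closed orientable $3$-manifold finitely covered by $M$. My plan is to show that $N$ is again a graph manifold and then apply the Buyalo--Svetlov (equivalently Kapovich--Leeb) criterion for NPC metrics on graph manifolds. Once $N$ carries an NPC metric, its pullback to $M$ is $G$-invariant and NPC, because curvature is local; so the whole task is to prove that $N$ admits an NPC metric.

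\textbf{Step 1: $N$ is a graph manifold.} $N$ is irreducible, since it is finitely covered by the irreducible, aspherical $M$; this uses the equivariant sphere theorem, or geometrization. By the equivariant torus theorem (Meeks--Scott), the JSJ tori of $M$ can be isotoped to be $G$-invariant. They then descend to a family of incompressible tori in $N$. Because the action is free, no Klein bottles arise, so these tori form the JSJ decomposition of $N$. Each JSJ piece of $N$ is the quotient of a Seifert piece of $M$ by its stabilizer. That stabilizer acts freely, so by Meeks--Scott it preserves a Seifert fibration, and the quotient piece is Seifert fibered. Hence $N$ is a graph manifold. If $M$ happens to be Seifert fibered, the statement is already covered by \cite[Theorem~2.1]{MeeksScott1986FiniteActions3Manifolds}, so we may assume the JSJ decomposition of $M$ is nonempty.

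\textbf{Step 2: the combinatorial NPC criterion.} The Buyalo--Svetlov theorem characterizes which graph manifolds with nonempty JSJ decomposition admit NPC metrics. Such an $N$ is NPC if and only if the following linear system has a solution. Each JSJ torus $T$ comes with an intersection number $b_T \neq 0$ of the fibers of the two adjacent pieces. Each end of a piece $v$ comes with a gluing twist. Together these define a weighted "charge" at each vertex $v$, denoted $k_v$. The condition is that there exist positive weights $w_v > 0$ on the vertices satisfying a system of inequalities involving the $k_v$, the $w_v$, and the $b_T$; roughly, $|k_v| w_v \le \sum_{T \ni v} w_{v'}/|b_T|$, with care needed about self-adjacent tori and signs. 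By hypothesis, $M$ admits a solution $(w_v)$ of this system.

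\textbf{Step 3: descent of the criterion (main obstacle).} The covering $M \to N$ induces a map of JSJ graphs. Each vertex $u$ of $N$ is covered by the $G$-orbit of vertices of $M$ above it. Two things must be checked.
\begin{enumerate}[(a)]
\item The quantities $b$ and $k$ transform predictably under finite covers. Over each piece, the covering is a fiber-preserving covering of Seifert manifolds. If it has fiber degree $d$ and base degree $e$, one computes that $b_{\tilde T} = b_T\cdot(\text{products of fiber degrees})/(\text{degree on the torus})$, and that the charge $k$ scales accordingly, much as the Euler number does.
\item Averaging a solution $(w_v)$ over $G$ gives a $G$-invariant solution, since the system of inequalities is convex and $G$-equivariant. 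A $G$-invariant solution can then be pushed down and rescaled by the local degrees to produce a solution for $N$.
\end{enumerate}
The hard part is the bookkeeping in (a). The descended inequality at $u$ must be shown to follow from the sum of the inequalities at the vertices above $u$. This needs the normalization of $k$ and $b$ to be exactly compatible with covering degrees. I would first test this on a local model: a single piece with its neighbors, covered by a product-type fiberwise cover. Then I would handle the general case by factoring the cover $M \to N$, restricted to each piece, into a fiber-direction cover and a base-direction cover.

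Alternatively, I could try to avoid the bookkeeping altogether. Leeb's criterion reformulates NPC for graph manifolds as the existence of a "nonpositively curved" choice of flat metrics on the JSJ tori, compatible with orbifold hyperbolic metrics on the bases. One could average such choices over $G$ and then build $G$-invariant Leeb-type metrics piece by piece, using a $G$-invariant hyperbolic metric on each base orbifold; such metrics exist by Nielsen realization in dimension 2. This gives an equivariant construction directly on $M$.
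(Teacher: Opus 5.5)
\textbf{There are genuine gaps.} The first is in Step 1.

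\emph{Klein bottles do arise.} The claim that ``because the action is free, no Klein bottles arise'' is false. An element of $G$ can preserve a JSJ torus $T$ of $M$ while exchanging its two sides. Since it preserves the orientation of $M$, it must reverse the orientation of $T$. Since it acts freely, the image of $T$ in $N$ is a one-sided Klein bottle, and its regular neighborhood is a $K\tilde\times I$ JSJ piece of $N$. The example of Figure~\ref{fig:twisted_klein_bottle} is exactly this situation.

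Consequently:
\begin{itemize}
\item the JSJ graph of $N$ is not the quotient of the JSJ graph of $M$;
\item not every JSJ piece of $N$ is a quotient of a Seifert piece of $M$;
\item the ``map of JSJ graphs'' on which Step~3 rests does not exist as you describe it.
\end{itemize}
The paper goes the other way. It lifts the JSJ tori of $N$ to $M$ (Lemma~\ref{lem:JSJ_of_quotient_pulls_back_to_JSJ_of_original}), allowing parallel lifts that bound $T^2\times I$. It then uses Leeb--Scott to choose an NPC metric on $M$ in which all these lifts are flat and totally geodesic (Corollary~\ref{cor:geometric_JSJ}). The stabilizer of each lifted torus then acts on it by covering translations, hence trivially on homology.

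\emph{The descent step is not carried out.} The step that carries all the content is showing that a $G$-invariant solution on $M$ yields one on $N$. You defer this as ``bookkeeping'' in Step~3(a) and never do it.

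It also cannot be done for the criterion as you state it, because that criterion is wrong. Take two Seifert pieces $u,v$, each with a single boundary torus, glued to each other. Let $b$ be the intersection number of their fibers and $k_u,k_v$ the framing-independent charges. By Proposition~\ref{prop:leeb_condition}, each side separately forces $\sigma(f_u,f_v)=\pm b\,k_u l_u^2$ and $\sigma(f_u,f_v)=\pm b\,k_v l_v^2$. So $k_u=0$ forces $k_v=0$, yet your vertexwise inequalities are satisfiable with $k_u=0\neq k_v$. The correct condition couples neighboring pieces through these shared inner products and includes equalities.

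So ``convexity in $w$'' is not what makes averaging work. What does is the choice of variables: in terms of the inner products $\sigma_g$ on $H_1(T;\RR)$, positive-definiteness is a convex cone condition and MEC is linear.

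Even then, pushing down needs unitary framings on the pieces of $M$ and $N$ that correspond under $p$. In general these do not exist, because of shearing on the boundary tori. The paper first passes to a regular characteristic cover (Remark~\ref{rmk:generality_of_characteristic_coverings}). There the $G$-invariant lifted framings satisfy $p_*[b_j]=s[b'_k]$, $p_*[f_j]=s[f'_k]$ and $\ch(X,B)=\frac{d}{s^2}\ch(X',B')$ (Proposition~\ref{prop:charge_and_characteristic_covering}). With exactly these factors, MEC on a piece of $M$ converts into MEC on the piece of $N$ below it.

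\emph{Your alternative route.} It is closer to the paper, but it skips the same points:
\begin{itemize}
\item A sum of flat metrics on a torus is generally not flat; the paper first aligns translation subgroups (Proposition~\ref{prop:isometric_finite_group_free_action_on_tori}).
\item Building the extension equivariantly over a piece amounts to verifying MEC on the quotient piece, which is the same charge computation.
\end{itemize}
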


Combining Theorem~\ref{thm:npc_metric_free_action_invariant} with the previously known cases gives the following corollary.

\medskip

\begin{corollary}\label{cor:npc_metric_free_action_invariant}
Let $M$ be a closed, orientable $3$-manifold admitting an NPC metric, and let $G$ be a finite group acting freely on $M$ by orientation-preserving diffeomorphisms. Then $M$ admits a $G$-invariant NPC metric.
\end{corollary}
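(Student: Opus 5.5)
The plan is a case analysis on the JSJ decomposition of $M$, sending each case either to one of the previously known results quoted in the introduction or to Theorem~\ref{thm:npc_metric_free_action_invariant}. In every case I will aim to put an NPC metric on the quotient $N = M/G$ and pull it back to $M$. This works because $G$ acts freely, so the projection $M \to N$ is a finite regular covering and $G$ acts on $M$ by deck transformations. Since $G$ preserves orientation, $N$ is a closed orientable $3$-manifold, and the pulled-back metric is $G$-invariant with the same sectional curvatures.

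\textbf{Preliminaries.} First I record that $M$ is irreducible. By Cartan--Hadamard the universal cover of $M$ is diffeomorphic to $\RR^3$, so $M$ is aspherical; with the sphere theorem this gives irreducibility. Consequently $N$ is also irreducible and aspherical, since it is finitely covered by $M$. Hence both $M$ and $N$ have well-defined JSJ decompositions.

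\textbf{Reducing to a case split on $N$.} I compare the two decompositions through the equivariant JSJ theory of Meeks--Scott. The JSJ tori of $M$ can be isotoped to be $G$-invariant, and their images are the JSJ tori of $N$. Because a piece of $M$ finitely covers a piece of $N$, a piece of $M$ is Seifert fibered (respectively hyperbolic) exactly when its image piece in $N$ is. I then split into four cases.
\begin{enumerate}[(i)]
\item If $M$ is Seifert fibered, then \cite[Theorem~2.1]{MeeksScott1986FiniteActions3Manifolds} applies directly.
\item If $M$ is atoroidal and not Seifert fibered, then geometrization \cite{Perelman2002,Perelman2003Surgery,Perelman2003Extinction} together with \cite{DinkelbachLeeb2009} shows that $N$ is hyperbolic. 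A hyperbolic metric on $N$ is NPC, and I pull it back.
\item If $M$ has nontrivial JSJ decomposition and at least one piece is atoroidal (hence hyperbolic), then \cite[Theorem~3.3]{Leeb1995NPC3Manifolds} applies.
\item Otherwise every JSJ piece of $M$ is Seifert fibered, so $M$ is a graph manifold admitting an NPC metric. Theorem~\ref{thm:npc_metric_free_action_invariant} then gives an NPC metric on $N$, and I pull it back.
\end{enumerate}

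\textbf{Expected obstacle.} The only real work is bookkeeping: checking that the hypotheses of the cited results match the setting, in particular that they are stated for $N$ rather than for $M$. The main instance is case (iii). Leeb's criterion asks that $N$ be Haken with a non-Seifert JSJ piece, and I must confirm that a hyperbolic piece of $M$ projects to a hyperbolic piece of $N$. This is where the equivariant JSJ argument above is used: finite covers preserve the geometric type of a piece. A second point is that the Seifert case (i) is formally also a graph manifold, so cases (i) and (iv) overlap. That overlap is harmless, because either argument gives the conclusion.
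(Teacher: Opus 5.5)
Your case split is the paper's own argument. The corollary is obtained by combining Theorem~\ref{thm:npc_metric_free_action_invariant} with the cited Seifert, hyperbolic and Leeb cases, each applied to $N=M/G$ and then pulled back. You are also right that case (iii) silently needs $N$ itself to be Haken with a hyperbolic JSJ piece.

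\textbf{The gap.} The justification you give for case (iii) is false. Making the JSJ tori of $M$ $G$-invariant does not make their images the JSJ tori of $N$. An element of $G$ may stabilize a JSJ torus $T$ while exchanging its two sides, so it acts on $T$ by an orientation-reversing map and $p(T)$ is a one-sided Klein bottle. This is exactly the example after Lemma~\ref{lem:JSJ_of_quotient_pulls_back_to_JSJ_of_original} (Figure~\ref{fig:twisted_klein_bottle}). There the JSJ tori of $N$ are the boundaries of $K\tilde\times I$ neighborhoods of these Klein bottles, and they lift to parallel pairs in $M$. Moreover, the JSJ piece $X_+$ of $M$ covers no JSJ piece of $N$: it maps onto all of $N$, folding $\partial X_+$ two-to-one onto the Klein bottles. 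So ``a piece of $M$ finitely covers a piece of $N$'' cannot carry case (iii) as written.

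\textbf{The repair.} The conclusion is still true if you argue in the other direction.
\begin{enumerate}
\item[(1)] Lift the JSJ tori $\cT'$ of $N$ to the family $\cT$ of Lemma~\ref{lem:JSJ_of_quotient_pulls_back_to_JSJ_of_original}. Each component of $M\mid\cT$ finitely covers a component of $N\mid\cT'$.
\item[(2)] By uniqueness of the minimal such family, a JSJ collection of $M$ is isotopic to a minimal subfamily of $\cT$.
\item[(3)] Let $Y$ be a hyperbolic JSJ piece of $M$. The tori of $\cT$ lying inside $Y$ are incompressible, hence boundary-parallel because $Y$ is atoroidal. So $Y$ is a single component $X$ of $M\mid\cT$ plus product collars.
\item[(4)] $X$ is not Seifert fibered, and a finite cover of a Seifert fibered space is Seifert fibered. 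Hence the piece of $N$ that $X$ covers is not Seifert fibered, so it is hyperbolic.
\item[(5)] Since $\partial X\neq\emptyset$, also $\cT'\neq\emptyset$. So $N$ is Haken, and Leeb's Theorem~3.3 applies to it.
\end{enumerate}
A shortcut: Gromov's simplicial volume satisfies $\|M\|=|G|\,\|N\|$. For closed irreducible orientable $3$-manifolds it is positive exactly when some JSJ piece is hyperbolic, which gives the same conclusion.

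\textbf{A minor point.} Irreducibility of $M$ and $N$ does not follow from the sphere theorem, which goes the other way. Use the lifting argument of Proposition~\ref{prop:aspherical_implies_irreducible} instead: an embedded sphere lifts to $\RR^3$, bounds a ball there, and that ball projects injectively.
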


Problem~\ref{prob:npc_isometric} is also of independent interest. More generally, one may ask whether a finite subgroup of $\Diff(M)$ must preserve some distinguished geometric structure on $M$. In dimension $2$, every finite group action on a closed surface preserves a metric of constant curvature. In dimension $3$, Meeks--Scott \cite{MeeksScott1986FiniteActions3Manifolds,} and Dinkelbach--Leeb \cite{DinkelbachLeeb2009} showed that finite group actions on closed manifolds modeled on one of Thurston's eight geometries can likewise be realized by isometries of a suitable geometric metric. 

\subsection{Proof Roadmap}\label{subsec:roadmap}

Theorem \ref{thm:npc_metric_free_action_invariant} assumes that $G$ acts freely on $M$. Set $M' = M/G$. Then $M'$ is again a closed, orientable $3$-manifold, and it suffices to prove that $M'$ admits an NPC metric.

Let $p : M \rightarrow M'$ be the covering projection. To construct an NPC metric on $M'$, we begin with the JSJ decomposition of $M'$ \cite{JacoShalen1979,Johannson1979}, which splits $M'$ along a collection of JSJ tori $\cT'$. Our main strategy is to choose flat metrics on the tori in $\cT'$ and then extend these metrics over each component of $M' \mid \cT'$ (Here, $M' \mid \cT'$ denote the manifold obtained by removing the interiors of tubular neighborhoods of the tori in $\cT'$) using a criterion by Leeb \cite{Leeb1995NPC3Manifolds}, which we restate in the language of charges of framed Seifert fibered spaces \cite{BuyaloSvetlov2005,LueckeWu1997}. We call this criterion the \emph{metric extension criterion}, abbreviated MEC.

To choose flat metrics on $\cT'$ satisfying MEC on each component of $M' \mid \cT'$, we lift the tori in $\cT'$ to a collection of incompressible tori $\cT$ in $M$ and construct a $G$-invariant flat metric $\bar g$ on the tori in $\cT$ satisfying MEC on each component of $M \mid \cT$. Since $\bar g$ is $G$-invariant, it descends to a flat metric $p_*\bar g$ on $\cT'$. We then verify that $p_*\bar g$ satisfies MEC on each component of $M' \mid \cT'$.

It remains to construct the metric $\bar g$ on the tori in $\cT$. We first choose an NPC metric $g$ on $M$ whose restriction to each torus in $\cT$ is flat. We then apply a modified averaging argument to the flat metric $g|_{\cT}$ with respect to the action of $G$. The resulting metric $\bar g$ is $G$-invariant and remains flat. Moreover, since MEC is a linear condition and $g|_{\cT}$ satisfies MEC on each component of $M \mid \cT$, the averaged metric $\bar g$ also satisfies MEC. Therefore $p_*\bar g$ satisfies MEC on $M'$, so the flat metrics on $\cT'$ extend over the JSJ pieces of $M'$ to give an NPC metric on $M'$.

\subsection{Future Directions}

One may ask whether our approach to Theorem \ref{thm:npc_metric_free_action_invariant} can be generalized to prove the analogous statement without the freeness assumption on the group action.

\medskip

\begin{conjecture} \label{conj:npc_dimension_3}
    Let $M$ be a closed orientable $3$-manifold admitting an NPC metric, and let $G$ be a finite group acting on $M$ by orientation-preserving diffeomorphisms. Then $M$ admits a $G$-invariant NPC metric.
\end{conjecture}

When the group $G$ in Conjecture \ref{conj:npc_dimension_3} does not act freely on $M$, the quotient $M' = M/G$ is an orientable $3$-orbifold rather than a $3$-manifold. There is a decomposition theorem for such orbifolds \cite[Theorem 3.15]{BoileauMaillotPorti2003} analogous to the JSJ decomposition of irreducible $3$-manifolds: an irreducible $3$-orbifold can be canonically split along a collection of incompressible, mutually disjoint, and mutually nonparallel Euclidean $2$-suborbifolds. The resulting components are either atoroidal or Seifert fibered $3$-orbifolds. This suggests a way to transform Conjecture \ref{conj:npc_dimension_3} into a topological problem about the orbifold JSJ decomposition. 

\subsection{Paper Organization}

This paper is organized as follows. 
In \S\ref{sec:npc}, we review some background concepts in NPC graph manifolds that are used in this paper. 
In \S\ref{sec:proof}, we prove Theorem \ref{thm:npc_metric_free_action_invariant} as outlined above. 

\subsection{Acknowledgements} \label{subsec:acknowledgements}

I would like to thank Bena Tshishiku for introducing me to Problem \ref{prob:npc_isometric}. This work would not have been possible without the many hours of fruitful and enthusiastic discussions we shared. I am especially grateful for his guidance on the proofs of Proposition \ref{prop:aspherical_implies_irreducible}, Lemma \ref{lem:faithful_torus_action_is_free_transitive}, and Lemma \ref{lem:properties_of_diff_of_torus}. I would also like to thank Ethan Dlugie and Eduardo Ventilari Sodr\'e for helpful discussions about the material in $\S$\ref{subsec:proof_of_main_2}.
\section{3-Manifolds with NPC Metrics} \label{sec:npc}

In this section, we review several properties of $3$-manifolds admitting NPC metrics that will be used in the proof of Theorem~\ref{thm:npc_metric_free_action_invariant}. We begin in $\S$\ref{subsec:JSJ_decomposition} by recalling the JSJ decomposition of irreducible $3$-manifolds. In particular, we show that every closed NPC $3$-manifold is irreducible and therefore admits a JSJ decomposition. Then, in $\S$\ref{subsec:framing_and_characteristic_covers}, we introduce unitary framings for Seifert fibered spaces with nonempty boundary. We also recall the notion of a characteristic covering of Seifert fibered spaces and show that unitary framings lift across such covers. Finally, in $\S$\ref{subsec:leeb's_condition}, we recall Leeb's metric extension criterion and provide a self-contained proof using the language of unitary framings and charges. 

We assume familiarity with the basic theory of surfaces and $2$-orbifolds. For a detailed introduction to $2$-orbifolds, we refer the reader to \cite{scott3manifolds,BoileauMaillotPorti2003}.

\subsection{JSJ Decomposition of an Irreducible 3-Manifold} \label{subsec:JSJ_decomposition}

Let $M$ be a closed orientable $3$-manifold. A smoothly embedded sphere $\phi: S^2 \hookrightarrow M$ is \emph{essential} if it does not bound a $3$-ball in $M$. A $3$-manifold $M$ is \emph{irreducible} if it does not contain any essential spheres.
An embedded torus $T^2 \subset M$ is \emph{incompressible} if the map $\pi_1(T^2) \hookrightarrow \pi_1(M)$ is injective. Geometrically, this means that any embedded disk $D^2$ in $M$ with $\partial D^2 \subset T^2$ bounds a disk in $T^2$. Given two embedded tori $T_1, T_2 \subset M$, we say that $T_1$ and $T_2$ are \emph{parallel} if there exists an embedded $T^2 \times I \subset M$ such that $T^2 \times \{0\} = T_1$ and $T^2 \times \{1\} = T_2$. We say that $T_1$ and $T_2$ are \emph{nonparallel} otherwise. Given a finite collection of mutually disjoint embedded tori $\cT = \{T_1, \ldots, T_n\}$ in $M$, let $N(T_i) \cong T_i \times I \subset M$ be a collection of mutually disjoint tubular neighborhoods of $T_i$ in $M$. We denote by $M \mid \cT$ the \emph{splitting of $M$ along $\cT$}, where
\begin{equation*}
    M \mid \cT = M \setminus \bigcup_{i=1}^n N(T_i).
\end{equation*}
If an orientable $3$-manifold is irreducible, then one can decompose $M$ along a finite collection of mutually disjoint, mutually nonparallel embedded tori $\cT$. The following theorem is due to Jaco--Shalen \cite{JacoShalen1979} and Johannson \cite{Johannson1979} and appears frequently in the literature as the \emph{JSJ decomposition}. See \cite[Theorem 1.9]{hatcher3manifolds} for a more modern proof.

\medskip

\begin{theorem}[JSJ Decomposition] \label{thm:JSJ_decomposition}
    Given a compact irreducible orientable $3$-manifold $M$, there exists a collection $\cT$ of finitely many mutually disjoint, mutually nonparallel incompressible tori such that each component of $M \mid \cT$ is either atoroidal or a Seifert fibered space. Such collection $\cT$ is unique up to isotopy.
\end{theorem}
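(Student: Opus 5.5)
This statement is the classical JSJ theorem, which the paper cites rather than proves. I would follow the standard route of Jaco--Shalen/Johannson in the streamlined form of \cite[Theorem 1.9]{hatcher3manifolds}. The argument splits into existence, obtained from a finiteness bound, and uniqueness, obtained from an intersection-minimizing argument on Seifert fibered pieces.

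\textbf{Existence.} First I would invoke Kneser--Haken finiteness. In a compact irreducible orientable $M$, there is a bound $N(M)$ on the number of mutually disjoint, mutually nonparallel incompressible tori. The proof fixes a triangulation and puts each torus in normal form. Then, outside a bounded number of tetrahedra pieces, adjacent tori cobound products $T^2\times I$. Hence some collection $\cT$ of disjoint, nonparallel incompressible tori is maximal with respect to the property that every complementary component is either atoroidal or Seifert fibered. The family is nonempty, since it contains a maximal family of disjoint nonparallel incompressible tori. In that case each component of $M\mid\cT$ contains no essential tori not parallel to the boundary, so it is atoroidal.

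Among all collections with this property, I would choose one of minimal cardinality. Minimality forces the following condition. For any $T\in\cT$, removing $T$ from $\cT$ must fail, so the union of the two pieces adjacent to $T$ is neither atoroidal nor Seifert fibered. In particular, whenever both adjacent pieces are Seifert fibered, their fibrations do not match along $T$.

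\textbf{Uniqueness.} Let $\cT,\cT'$ be two minimal collections. Isotope them to intersect transversely and minimally. I would show that each $T'\in\cT'$ can be isotoped off $\cT$. Each component of $T'\cap P$, for $P$ a piece of $M\mid\cT$, is an incompressible, $\partial$-incompressible annulus or torus. Atoroidal pieces admit no essential annuli except in trivial cases, so the intersection lies in Seifert pieces. In a Seifert piece, essential annuli and tori are isotopic to vertical ones; this uses Waldhausen's classification. Vertical annuli on both sides of a torus $T\in\cT$ then force the fibrations of adjacent Seifert pieces to match along $T$, contradicting minimality. Hence $T'\cap\cT=\emptyset$ after isotopy. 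Now $T'$ lies in a single piece $P$, where it is either boundary parallel or vertical in a Seifert $P$. In the vertical case, minimality of $\cT'$ lets us push it to a torus of $\cT$. By symmetry, each torus of one collection is isotopic to a torus of the other. Nonparallelism then gives a bijection, and the isotopies can be assembled.

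The main obstacle is the uniqueness step. It needs the analysis of essential annuli and tori in Seifert fibered spaces (vertical versus horizontal), including the small exceptional cases: $T^2\times I$, the twisted $I$-bundle over the Klein bottle, and the solid torus. These must be handled separately, or excluded via the nonparallel and incompressible hypotheses. I would treat them first by direct inspection and then run the general argument.
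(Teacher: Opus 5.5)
The paper does not prove this statement; it cites Jaco--Shalen, Johannson and Hatcher's Theorem~1.9, and your outline follows Hatcher's argument. Your existence half is fine. The genuine gap is at the very end of your uniqueness step.

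Suppose $\cT'\cap\cT=\emptyset$ and some $T'\in\cT'$ is vertical but not boundary parallel in a Seifert piece $P$ of $M\mid\cT$. Then $T'$ cannot be ``pushed to a torus of $\cT$''. Disjoint isotopic incompressible tori cobound a product region, and here that region would lie in $P$, making $T'$ boundary parallel. So this case has to be shown \emph{impossible}, and the phrase ``minimality of $\cT'$'' stands in for an argument that is not given. Here is one way to supply it.
\begin{itemize}
\item After isotoping $\cT'\cap P$ to be vertical, let $R_1,R_2$ be the components of $P$ cut along these tori that are adjacent to $T'$. Let $Q_1,Q_2$ be the pieces of $M\mid\cT'$ containing them. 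Each $Q_i$ is $R_i$ with further regions glued on along tori of $\cT$.
\item If $Q_i$ is atoroidal, those tori are boundary parallel in $Q_i$. The product regions cannot contain $R_i$, since otherwise $T'$ would be boundary parallel in $P$. So $Q_i$ is just $R_i$ plus collars.
\item If $Q_i$ is Seifert fibered, those tori can be made vertical, so $R_i$ is a fibered sub-piece of $Q_i$. Seifert fibrations of $R_i$ are unique up to isotopy, except when $R_i\cong T^2\times I$ or $K\tilde\times I$, which are handled directly. Hence $Q_i$ and $P$ have the same fiber slope on $T'$.
\end{itemize}
Either way $Q_1\cup_{T'}Q_2$ is Seifert fibered. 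So $T'$ could be deleted from $\cT'$, contradicting minimality. Your outline never invokes uniqueness of Seifert fibrations, and without it this step does not go through.

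There are two smaller points.
\begin{itemize}
\item You are right to prove uniqueness only for minimal collections, and this is not optional. As literally stated, uniqueness is false: in $S^1\times\Sigma_2$, both $\emptyset$ and a single vertical torus $S^1\times c$ with $c$ nonseparating satisfy the conditions.
\item In the disjointness step, atoroidal pieces certainly can contain essential annuli. Examples are $S^1$ times a pair of pants, torus-knot exteriors, and cable spaces, so this is not a matter of ``trivial cases.'' The needed input is the lemma that a compact irreducible atoroidal manifold containing an essential annulus with boundary on torus components is Seifert fibered. It is proved by examining the frontier tori of a regular neighborhood of the annulus together with the boundary tori it meets. Then the annulus is vertical for \emph{some} fibration of that piece (for $K\tilde\times I$, possibly the other one), which is exactly what your fiber-matching argument at a torus of $\cT$ uses.
\end{itemize}
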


We say that a collection of incompressible tori $\cT$ in $M$ is \emph{JSJ collection} if $\cT$ satisfies the conditions in Theorem \ref{thm:JSJ_decomposition}. We call the corresponding decomposition $M \mid \cT$ a \emph{JSJ decomposition} of $M$. By geometrization of 3-manifolds, each piece arising in the JSJ decomposition is geometric: if a component $X_i$ of $M \mid \cT$ is Seifert fibered, then it is modeled on one of the six Thurston geometries (cf.\ \cite{Thurston1997}); otherwise, $X_i$ is atoroidal, and the interior of $X_i$ is a finite-volume hyperbolic cusped manifold (cf.\ \cite{BessieresBessonBoileauMaillotPorti2010}). 

A crucial part of our proof of Theorem \ref{thm:npc_metric_free_action_invariant} is understanding how the incompressible tori in the JSJ decomposition of the quotient manifold $M/G$ relate to those in the JSJ decomposition of $M$. The following lemma shows that a JSJ collection of tori in $M/G$ always pulls back to a collection of incompressible tori in $M$. However, the pullback tori need not be mutually non-parallel. In this case, the corresponding splitting of $M$ may contain pieces diffeomorphic to $T^2 \times I$. This phenomenon is related to the presence of twisted $I$-bundles over the Klein bottle, denoted $K \tilde\times I$, in the JSJ decomposition of $M/G$.

\medskip

\begin{lemma} \label{lem:JSJ_of_quotient_pulls_back_to_JSJ_of_original}
    Let $M'$ be a closed orientable irreducible $3$-manifold that admits a finite-sheeted cover by a closed $3$-manifold $M$, with covering projection $p: M \rightarrow M'$. Let $\cT'$ be the collection of JSJ tori in $M'$ as in Theorem \ref{thm:JSJ_decomposition}. Let $\cT$ be the collection of connected components of $p^{-1}(T')$ over all $T' \in \cT'$. Then $\cT$ is a collection of incompressible tori in $M$ such that $M \mid \cT$ consists of Seifert fibered or atoroidal components. In particular, $\cT$ is a JSJ collection of $M$ if $M' \mid \cT'$ contains no components diffeomorphic to $K \tilde\times I$.
\end{lemma}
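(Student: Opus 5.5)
The argument will have three parts, and I will treat them in order.

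\emph{Components of $p^{-1}(T')$ are incompressible tori.} Fix $T' \in \cT'$. Since $p$ is a finite-sheeted covering, each component $T$ of $p^{-1}(T')$ is a closed surface. The restriction $p|_T : T \to T'$ is a finite covering, so $\chi(T) = d\,\chi(T') = 0$. Because $M'$ is orientable, $p$ preserves local orientations; $T'$ is two-sided and orientable, so $T$ is orientable, and hence $T$ is a torus. For incompressibility, consider the composite $\pi_1(T) \to \pi_1(T') \to \pi_1(M')$. The first map is injective because $p|_T$ is a covering. The second is injective because $T'$ is incompressible. This composite equals $\pi_1(T) \to \pi_1(M) \xrightarrow{p_*} \pi_1(M')$, so $\pi_1(T) \to \pi_1(M)$ is injective. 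The tori in $\cT$ are pairwise disjoint because the tori of $\cT'$ are. We may also take the tubular neighborhoods to be $p^{-1}$ of those in $M'$.

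\emph{Pieces of $M \mid \cT$ are Seifert fibered or atoroidal.} Each component $X$ of $M\mid\cT$ is a component of $p^{-1}(X')$ for some component $X'$ of $M'\mid\cT'$, and $p|_X : X \to X'$ is a finite covering.

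If $X'$ is Seifert fibered, pull back the Seifert fibration. Locally the covering is a diffeomorphism onto fibered solid tori, and a finite cover of a fibered solid torus is again a union of fibered solid tori. So $X$ inherits a Seifert fibration. Alternatively, one can invoke the geometrization statement already quoted: $X'$ carries one of the six Seifert geometries, and $X$ carries the pulled-back geometry.

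If $X'$ is atoroidal and not Seifert fibered, its interior is a finite-volume hyperbolic manifold, as recalled after Theorem~\ref{thm:JSJ_decomposition}. Then the interior of $X$ is also finite-volume hyperbolic. Hence $X$ is atoroidal: any incompressible torus in $X$ is boundary parallel, because $\ZZ^2$ subgroups of a finite-volume Kleinian group are peripheral.

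\emph{The JSJ property when there is no $K\tilde\times I$ piece.} By the uniqueness in Theorem~\ref{thm:JSJ_decomposition}, it remains to show that the tori of $\cT$ are mutually nonparallel, and that no torus of $\cT$ can be deleted while keeping the pieces Seifert fibered. This is the main obstacle. I would argue by contradiction in two cases.

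Suppose first that $T_1, T_2 \in \cT$ are parallel. Then there is a product region $T^2\times I$ between them. Because $M$ is irreducible and the tori are incompressible, this region can be isotoped so that it is a component of $M\mid\cT$. That component covers some piece $X'$ of $M'\mid\cT'$. By standard classification, a compact $3$-manifold finitely covered by $T^2\times I$ is either $T^2\times I$ or $K\tilde\times I$. The piece $X'$ cannot be $T^2 \times I$, since then two tori of $\cT'$ would be parallel. It cannot be $K\tilde\times I$, since we have excluded such components.

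Suppose instead that two adjacent Seifert fibered pieces of $M$ along some $T\in\cT$ have fibrations that match up to isotopy. Then the corresponding adjacent pieces of $M'$ would have matching fibrations as well. This uses uniqueness of Seifert fibrations on pieces that are not $T^2\times I$, $K\tilde\times I$, or a solid torus, together with $G$-invariance of fibers up to isotopy, which comes from the Meeks--Scott results already cited. That would contradict the minimality of $\cT'$.

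I expect the delicate point to be this fiber-matching descent. The plan for it is to use that the fiber slope on $T'$ is determined by the (unique up to isotopy) fibration of each adjacent piece of $M'$, and that slopes pull back under $p|_T$.
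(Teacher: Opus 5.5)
Your first two parts, and your handling of parallel tori, follow the paper's proof essentially verbatim. Incompressibility comes from the injective composite $\pi_1(T)\to\pi_1(T')\to\pi_1(M')$. Finite covers of Seifert fibered or hyperbolic pieces are of the same type. A $T^2\times I$ component of $M\mid\cT$ would cover a piece of $M'\mid\cT'$ that is $T^2\times I$ or $K \tilde\times I$.

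The real difference is your second case. The paper stops at non-parallelism, because its Theorem~\ref{thm:JSJ_decomposition} omits the minimality clause, so under its definition of a JSJ collection nothing more is checked. But uniqueness up to isotopy in the actual JSJ theorem needs minimality, and the paper uses that uniqueness later: Corollary~\ref{cor:geometric_JSJ} matches each $T_{i,j}$ with a JSJ torus of $M$. Your fiber-slope descent is the right way to supply minimality.

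That step can be done without Meeks--Scott:
\begin{itemize}
\item No piece of $M\mid\cT$ or $M'\mid\cT'$ is a solid torus, $T^2\times I$ or $K \tilde\times I$. So in each Seifert fibered piece the fiber generates the unique maximal infinite cyclic normal subgroup of $\pi_1$, and the fiber slope on each boundary torus is a topological invariant.
\item $X_i'$ is Seifert fibered, since otherwise $X_i'$, and hence $X_i$, would be hyperbolic.
\item Therefore the given fibration of $X_i$ has the same boundary slopes as the pullback of the fibration of $X_i'$.
\item Since $p_*\colon H_1(T;\QQ)\to H_1(T';\QQ)$ is an isomorphism, matching slopes on $T$ force matching slopes on $T'$.
\end{itemize}

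One caveat, which the paper shares. Your claim that $X'\not\cong T^2\times I$ ``since then two tori of $\cT'$ would be parallel'' misses the case where both ends of $X'$ are glued to the same torus. This happens when $M'$ is a Sol torus bundle whose JSJ collection is a single fiber. Its $k$-fold cyclic cover along the base circle, for $k\ge 2$, has $k$ parallel lifts of that fiber, so the lemma fails as stated there. This cannot occur for the NPC graph manifolds the paper needs, but you should exclude the case explicitly.
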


\begin{proof}
    We first note that every connected component of $p^{-1}(T')$ is a torus. Since $T'$ is incompressible in $M'$, each component of $p^{-1}(T')$ is incompressible in $M$.
    The components of $M \mid \cT$ are connected components of $p^{-1}(X')$, where $X'$ is a component of $M' \mid \cT'$. Since each $X'$ is either Seifert fibered or atoroidal, every finite cover of $X'$ is again Seifert fibered or atoroidal. Hence every component of $M \mid \cT$ is Seifert fibered or atoroidal.

    It remains to prove the mutually non-parallel statement. The only possible obstruction to $\cT$ being mutually non-parallel is that $M \mid \cT$ contains a product region $T^2 \times I$ between two parallel tori of $\cT$. Such a component covers a component of $M' \mid \cT'$ that is finitely covered by $T^2 \times I$. By the classification of orientable irreducible $3$-manifolds finitely covered by $T^2 \times I$, this component must be diffeomorphic to either $T^2 \times I$ or $K \tilde\times I$. 
    Since $\cT'$ is a JSJ collection of $M'$, no component of $M' \mid \cT'$ is diffeomorphic to $T^2 \times I$. By assumption, no component of $M' \mid \cT'$ is diffeomorphic to $K \tilde\times I$ either. Therefore $M \mid \cT$ contains no $T^2 \times I$ components, so no torus in $\cT$ is redundant. Hence $\cT$ is a JSJ collection of $M$.
\end{proof}

\medskip

\begin{example}
    Figure \ref{fig:twisted_klein_bottle} illustrates an example of a covering map $M \rightarrow M'$ where the JSJ tori of $M'$ lifts to parallel tori. On the left, the graph manifold $M$ is obtained by gluing two diffeomorphic copies $X_+$ and $X_-$ of a Seifert fibered space $X$ with two torus boundary components. The gluing is performed along an orientation-reversing involution $\phi: T^2 \rightarrow T^2$ such that $T^2/\langle \phi \rangle$ is the Klein bottle. The JSJ decomposition of $M$ consists of the two black tori $T_1$ and $T_2$. On the right, the quotient $M'$ is obtained from $M$ by a free involution that exchanges the pieces $X_+$ and $X_-$, sending $x \in \partial X_+$ to $\phi(x) \in \partial X_-$ and vice versa. The green tori $T'_1$ and $T'_2$ are the JSJ tori of $M'$, and each lifts to two tori $T_{i+}$ and $T_{i-}$ in $M$. Notice that each $T'_i$ bounds a copy of $K \tilde\times I$ in $M'$, which lifts to $T^2 \times I$ in $M$. Thus, for each $i=1,2$, the pair $T_{i+}$ and $T_{i-}$ consists of parallel tori isotopic to the JSJ torus $T_i$.
\end{example}

\begin{figure}[ht]
    \centering
    \footnotesize
    
    \def\svgwidth{0.9\columnwidth}
    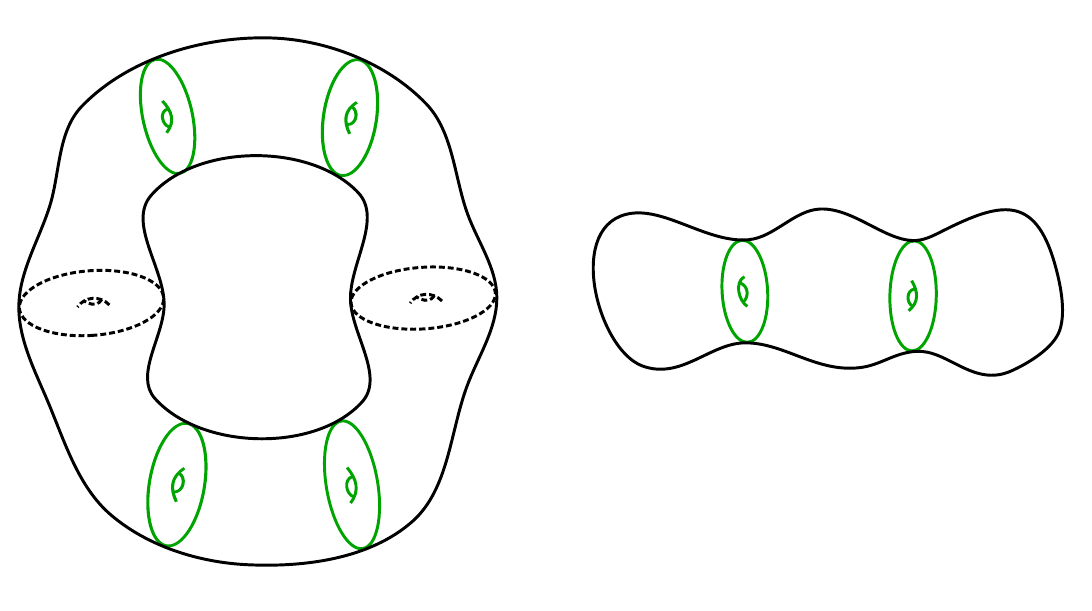

    \caption{An example in which the lifts of the JSJ tori of the quotient contains parallel tori.}
    \label{fig:twisted_klein_bottle}
\end{figure}

To conclude this section, we show that every NPC $3$-manifold is irreducible, so we can apply Theorem \ref{thm:JSJ_decomposition} on it. We first prove a general theorem about the relationship between manifolds $M$ whose universal cover is $\RR^n$ and embedded spheres in $M$.

\medskip

\begin{proposition} \label{prop:aspherical_implies_irreducible}
    Let $M$ be a connected, closed, orientable $n$-manifold whose universal cover is $\RR^n$, where $n \geq 3$. Then any smooth embedding of $S^{n-1}$ into $M$ bounds a topological $B^n$.
\end{proposition}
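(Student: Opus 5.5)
We prove Proposition~\ref{prop:aspherical_implies_irreducible} by lifting the sphere to the universal cover and applying the Schoenflies theorem there. Let $\phi: S^{n-1} \hookrightarrow M$ be a smooth embedding with image $S$, and let $\pi: \RR^n \to M$ be the universal covering. Since $S^{n-1}$ is simply connected for $n \geq 3$, the inclusion $S \hookrightarrow M$ lifts to an embedding $\tilde\phi: S^{n-1} \hookrightarrow \RR^n$ with image $\tilde S$. The restriction $\pi|_{\tilde S}$ is injective, and the full preimage $\pi^{-1}(S)$ is the disjoint union of the translates $\gamma \tilde S$, $\gamma \in \pi_1(M)$. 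Since $\tilde S$ is smooth (hence locally flat), the generalized Schoenflies theorem (Brown--Mazur) shows that $\tilde S$ bounds a topological ball $\tilde B \subset \RR^n$, namely the closure of the bounded component of $\RR^n \setminus \tilde S$.

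Next we show that $\pi|_{\tilde B}$ is injective. The key claim is that $\gamma \tilde B \cap \tilde B = \emptyset$ for every $\gamma \neq 1$. First, $\gamma \tilde S$ is disjoint from $\tilde S$ for $\gamma \neq 1$. Second, $\pi_1(M)$ is torsion-free because $M$ is aspherical and finite dimensional, and it acts freely. Suppose $\gamma \tilde B \cap \tilde B \neq \emptyset$. Since the boundary spheres are disjoint, either one ball contains the other, or they are disjoint. If $\gamma \tilde B \subseteq \tilde B$, or the reverse inclusion holds, then Brouwer's fixed point theorem applied to $\gamma: \tilde B \to \tilde B$ gives a fixed point of $\gamma$, contradicting freeness. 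A fixed point is not even needed here: a proper inclusion $\gamma\tilde B \subsetneq \tilde B$ is already impossible, since $\gamma$ preserves the Riemannian volume pulled back from $M$. Brouwer is simpler, and it shows that even $\gamma \tilde B \subseteq \tilde B$ forces $\gamma = 1$. Hence the translates of $\tilde B$ are pairwise disjoint.

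It follows that $\pi|_{\tilde B}$ is injective and continuous from a compact space into a Hausdorff space, so it is an embedding. Its image $B = \pi(\tilde B)$ is a topological $n$-ball with $\partial B = \pi(\tilde S) = S$. This shows that $S$ bounds a topological $B^n$. If a smooth ball is wanted for the irreducibility application in dimension $3$, we can upgrade it using Alexander's theorem in $\RR^3$: a smooth $2$-sphere in $\RR^3$ bounds a smooth ball.

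The main obstacle is the disjointness argument in the second paragraph, in particular ruling out nesting of translates. The Brouwer fixed point argument handles this cleanly once we know the action is free. The Schoenflies step is quoted rather than proved, and it is where the hypothesis $n \geq 3$, together with smoothness giving local flatness, enters along with the lifting step.
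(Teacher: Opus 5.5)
Your proposal is correct and follows the paper's proof essentially step for step. You lift the sphere using simple connectivity of $S^{n-1}$, apply the generalized Schoenflies theorem in $\RR^n$, use disjointness of the translates $\gamma\tilde S$ to force $\gamma\tilde B$ and $\tilde B$ to be either disjoint or nested, and rule out nesting with Brouwer's fixed point theorem against freeness of the deck action. The side remarks on torsion-freeness, volume, and Alexander's theorem are unnecessary. Also, $n \geq 3$ is used only in the lifting step, since the Brown--Mazur Schoenflies theorem holds in every dimension for locally flat spheres.
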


\begin{proof}
    We need to show that given any smooth embedding of an $(n-1)$-sphere $\phi: S^{n-1} \hookrightarrow M$, there exists an embedding of a closed $n$-ball $\psi: B^n \hookrightarrow M$ such that $\psi|_{\partial B^n} = \phi$. Let $\tilde p: \RR^n \rightarrow M$ be the universal covering projection. Since $S^{n-1}$ is simply connected for $n \geq 3$, the lifting property of covering maps implies that $\phi$ lifts to a smooth map $\tilde \phi: S^{n-1} \rightarrow \RR^n$. Because $\phi$ is an embedding and $\tilde p$ is a covering projection, $\tilde \phi$ is also a smooth embedding. By the generalized Schoenflies theorem, there exists a topological embedding $\tilde\psi: B^n \hookrightarrow \RR^n$ such that $\tilde\psi|_{\partial B^n} = \tilde\phi$.

    Consider the map $\psi = \tilde p \circ \tilde\psi$. By construction, $\psi|_{\partial B^n} = \phi$. To see that $\psi(B^n)$ is an embedded ball, it suffices to show that $\psi$ is injective. Assume for contradiction that $\psi$ is not injective. Then there exist two distinct points in $\tilde\psi(B^n)$ with the same image in $M$ under $\tilde p$, so there exists a nontrivial deck transformation $\gamma \in \Deck(\RR^n \xrightarrow{\tilde p} M)$ sending one to the other. In particular,
    \begin{equation*}
        \gamma(\tilde\psi(B^n)) \cap \tilde\psi(B^n) \neq \emptyset.
    \end{equation*}
    Since $\tilde\psi|_{\partial B^n} = \tilde\phi$, and since $\tilde\phi$ is a lift of the embedding $\phi$, we have
    \begin{equation*}
        \gamma(\tilde\phi(S^{n-1})) \cap \tilde\phi(S^{n-1}) = \emptyset.
    \end{equation*}
    Hence $\gamma(\tilde\phi(S^{n-1}))$ is disjoint from the boundary of $\tilde\psi(B^n)$. Since $\tilde\phi(S^{n-1})$ separates $\RR^n$ into two components and $\tilde\psi(B^n)$ is the closure of the bounded one, it follows, after replacing $\gamma$ by $\gamma^{-1}$ if necessary, that
    \begin{equation*}
        \gamma(\tilde\psi(B^n)) \subset \tilde\psi(B^n).
    \end{equation*}
    Since $\tilde\psi(B^n) \cong B^n$, the Brouwer fixed-point theorem implies that $\gamma$ has a fixed point in $\tilde\psi(B^n)$. This contradicts the fact that a nontrivial deck transformation acts freely.
\end{proof}

\medskip

\begin{corollary} \label{cor:aspherical_implies_irreducible}
    Every connected closed orientable $3$-manifold $M$ that admits an NPC metric is irreducible.
\end{corollary}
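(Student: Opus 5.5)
The plan is to reduce the corollary to Proposition~\ref{prop:aspherical_implies_irreducible} with $n=3$, by way of the Cartan--Hadamard theorem. Let $M$ be a connected closed orientable $3$-manifold with an NPC metric $g$. Since $M$ is closed, $(M,g)$ is complete. The Cartan--Hadamard theorem then applies: for any $x \in M$, the exponential map $\exp_{\tilde x}: T_{\tilde x}\tilde M \to \tilde M$ of the pulled-back metric $\tilde g$ on the universal cover $\tilde M$ is a diffeomorphism. Hence $\tilde M \cong \RR^3$.

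With this in hand, the hypotheses of Proposition~\ref{prop:aspherical_implies_irreducible} hold for $n = 3$: $M$ is connected, closed and orientable, and its universal cover is $\RR^3$. Now let $\phi: S^2 \hookrightarrow M$ be any smoothly embedded sphere. The proposition gives a topological embedding $\psi: B^3 \hookrightarrow M$ with $\psi|_{\partial B^3} = \phi$, so $\phi(S^2)$ bounds a topological $3$-ball.

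The only subtle step is matching the paper's definition of irreducibility, which asks that every smoothly embedded sphere bound a $3$-ball, presumably a smooth one. I would close this gap with the fact that in dimension $3$ topological and smooth categories agree. Concretely, $\psi(B^3)$ is a compact smooth $3$-manifold with boundary $\phi(S^2)$, being the closure of a component of $M \setminus \phi(S^2)$. It is homeomorphic to $B^3$, so by Moise's theorem (uniqueness of smooth structures on $3$-manifolds, together with the smooth Schoenflies theorem in $\RR^3$) it is diffeomorphic to $B^3$. Alternatively, one can lift to $\RR^3$ and apply Alexander's theorem that every smooth $2$-sphere in $\RR^3$ bounds a smooth ball, then descend using the same injectivity argument as in the proof of the proposition. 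Either way no sphere in $M$ is essential, so $M$ is irreducible.
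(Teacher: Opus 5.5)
Your proposal is correct and follows the paper's proof exactly: Cartan--Hadamard shows the universal cover is $\RR^3$, and Proposition~\ref{prop:aspherical_implies_irreducible} with $n=3$ then finishes the argument. Your extra step upgrading the topological ball to a smooth one, via Alexander's theorem or uniqueness of smooth structures in dimension $3$, is a valid refinement that the paper leaves implicit, since its definition of an essential sphere does not specify the category of the bounding ball.
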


\begin{proof}
    The Cartan--Hadamard theorem implies that the universal cover of $M$ is $\RR^3$. The corollary then follows from Proposition \ref{prop:aspherical_implies_irreducible}.
\end{proof}

\subsection{Framings and Characteristic Coverings} \label{subsec:framing_and_characteristic_covers}

To study when a flat metric on the boundary of an orientable Seifert fibered space extends to an NPC metric on the whole space, we recall the notion of a framed Seifert fibered space.
Let $X \xrightarrow{\pi} O$ be an aspherical, orientable Seifert fibered space with nonempty boundary. Denote by $\partial X = T_1 \sqcup \cdots \sqcup T_n$ the torus components of the boundary of $X$. We orient the $T_j$'s by the orientations induced from the orientation of $X$. This gives intersection forms
\begin{equation*}
    \wedge_j: H_1(T_j; \ZZ) \times H_1(T_j; \ZZ) \rightarrow \ZZ.
\end{equation*}

We define a \emph{unitary framing} of $X$, which is a special type of framing as introduced in \cite{LueckeWu1997}. For each boundary torus $T_j$, let $[f_j]$ be the class in $H_1(T_j;  \ZZ)$ represented by an oriented regular fiber. (We will henceforth denote by $[c]$ the homology class of the (multi)curve $c$.) A \emph{framing} $B$ of $X$ is a union of oriented simple closed curves $B = b_1 \cup \ldots \cup b_n$, where $b_j \subset T_j$ and $[b_j] \wedge_j [f_j] \neq 0$. 
We call the pair $(X, B)$ a \emph{framed Seifert fibered space}. 
A framing is \emph{unitary} if we further require $[b_j] \wedge_j [f_j] = 1$. Equivalently, the ordered pair $([b_j], [f_j])$ is an oriented basis for $H_1(T_j; \ZZ)$. 

Let $i_j: T_j \hookrightarrow X$ be the inclusion map. The \emph{charge} of the framed Seifert fibered space $(X,B)$ is the rational number $\ch(X,B)$ satisfying
\begin{equation} \label{eqn:relative_euler_number}
    \sum_{j=1}^n (i_j)_* [b_j] = \ch(X,B) \, [f].
\end{equation}
Here, we work in $H_1(X;\QQ)$, and $[f]$ is the class of a regular fiber in $H_1(X;\QQ)$. See \cite[Lemma~1.1]{BuyaloSvetlov2005} for well-definedness of $\ch(X, B)$.

We now relate the charge $\ch(X, B)$ to the \emph{relative Euler number} $e(X,B)$ of a framed Seifert fibered space $(X,B)$ and to the Seifert invariants of the exceptional fibers of $X$. Suppose that $X$ has $m$ exceptional fibers with Seifert invariants $(\alpha_k,\beta_k)_{k=1}^m$. Let $c_k$ be a horizontal curve around the $k$-th exceptional fiber such that $\alpha_k [c_k] + \beta_k [f] = 0$ in $H_1(X; \ZZ)$. The existence of $c_k$ follows from the interpretation of Seifert invariants in \cite{LueckeWu1997}. There exists $e_0 \in \ZZ$ that satisfies
\begin{equation*}
    \sum_{j=1}^n (i_j)_* [b_j] + \sum_{k=1}^m [c_k] = e_0 [f].
\end{equation*}
Working in $H_1(X; \QQ)$ and replacing $[c_k]$ by $-\frac{\beta_k}{\alpha_k} [f]$, we obtain
\begin{equation*}
    \ch(X,B) [f] 
    = \sum_{j=1}^n (i_j)_* [b_j] 
    = \left(e_0 + \sum_{k=1}^m \frac{\beta_k}{\alpha_k}\right) [f].
\end{equation*}
The coefficient of $[f]$ on the right-hand side is precisely $-e(X(B))$, where $X(B)$ is the closed Seifert fibered space obtained from $X$ by Dehn filling each boundary torus $T_j$ so that $b_j$ is the meridian of the corresponding filling solid torus. The quantity $e(X,B) = e(X(B))$ is called the \emph{relative Euler number} of the framed Seifert fibered space $(X,B)$; cf.\ \cite{LueckeWu1997}. Thus, with these conventions,
\begin{equation} \label{eqn:charge_is_negative_rel_euler}
    \ch(X,B) = -e(X,B).
\end{equation}
We remark that Equation \eqref{eqn:charge_is_negative_rel_euler} was observed by Buyalo--Svetlov in the discussion following \cite[Lemma 1.1]{BuyaloSvetlov2005}.

We would like to study how unitary framings behave under finite covers of Seifert fibered spaces. We recall from \cite{LueckeWu1997} that, given framed Seifert fibered spaces $(X,B)$ and $(X',B')$, a map $p: X \rightarrow X'$ is called a \emph{covering of framed Seifert fibered spaces} if $p$ is fiber-preserving and each component of $B$ is a component of $p^{-1}(B')$. We often abbreviate this by saying that $p: (X,B) \rightarrow (X',B')$ is a covering.

Luecke--Wu studied the naturality of $e(X, B)$ with respect to finite covers of framed Seifert fibered spaces. A covering projection $p: X \rightarrow X'$ of Seifert fibered spaces is a \emph{$(u,v)$-cover} if each regular fiber of $X'$ is covered by $u$ regular fibers of $X$, and the restriction of $p$ to each regular fiber has degree $v$. Luecke--Wu showed that if $p: (X, B) \rightarrow (X',B')$ is a $(u,v)$-cover of framed Seifert fibered spaces, then
\begin{equation*}
    e(X,B) = \frac{u}{v} \, e(X',B').
\end{equation*}
Equation \eqref{eqn:charge_is_negative_rel_euler} then gives us
\begin{equation} \label{eqn:naturality_of_charge}
    \ch(X,B) = \frac{u}{v} \, \ch(X',B').
\end{equation}
This multiplicative property of $\ch(X, B)$ is convenient when we consider coverings of the form $X \rightarrow X/G$, where $G$ acts on $X$ freely by orientation-preserving, fiber-preserving diffeomorphisms. However, there is one issue: given a fiber-preserving cover $p: X \rightarrow X'$, there need not exist unitary framings $B$ and $B'$ on $X$ and $X'$, respectively, such that $p: (X,B) \rightarrow (X',B')$ is a covering of framed Seifert fibered spaces. The obstruction comes from possible shearing in the induced coverings of the boundary tori.

The issue described above does not arise for characteristic coverings. Let $p: T \rightarrow T'$ be a covering of tori. After choosing basepoints, let $\pi' = \pi_1(T')$ and let $\pi < \pi'$ be the finite-index subgroup corresponding to $\pi_1(T)$. We recall from \cite{LueckeWu1997} that $p: T \rightarrow T'$ is an \emph{$s$-characteristic covering} if $\pi = s\pi'$, where $s\pi'$ denotes the unique $s$-characteristic subgroup of $\pi' \cong \ZZ^2$. Equivalently, after identifying $\pi'$ with $\ZZ^2$, the subgroup $\pi$ is $(s\ZZ)^2$. A fiber-preserving covering of Seifert fibered spaces $p: X \rightarrow X'$ is \emph{$s$-characteristic} if its restriction to each boundary torus of $X$ is an $s$-characteristic covering. 

We show that unitary framings can always be lifted across characteristic covers.

\medskip
\begin{proposition} \label{prop:charge_and_characteristic_covering}
    Let $p: X \rightarrow X'$ be an $s$-characteristic degree-$d$ fiber-preserving covering of oriented Seifert fibered spaces. Let $B'$ be a unitary framing of $X'$. Then the following statements hold.
    \begin{enumerate}
        \item[\textnormal{(a)}] There exists a unitary framing $B$ of $X$ such that $p: (X,B) \rightarrow (X',B')$ is a covering, and
        \begin{equation} \label{eqn:charge_and_characteristic_covering}
        \ch(X,B) = \frac{d}{s^2} , \ch(X',B').
        \end{equation}

        \item[\textnormal{(b)}] If $p: X \rightarrow X'$ is regular, and $G = \Deck(X \rightarrow X')$ is the finite deck group such that $X' \cong X/G$, then $B$ can be chosen so that $\phi_*[B] = [B]$ for every $\phi \in G$, and $p_*[B] = \frac{d}{s}[B']$.
        Here $[B]$ and $[B']$ denote the sums of the homology classes of the framing curves in $H_1(X;\ZZ)$ and $H_1(X';\ZZ)$, respectively. Moreover, componentwise, if $p(T_j)=T'_k$, then
        \begin{equation*}
            p_*[b_j] = s[b'_k].
        \end{equation*}
    \end{enumerate}
\end{proposition}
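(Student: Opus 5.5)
The plan is to build the framing torus by torus and then read off the charge from the Luecke--Wu formula \eqref{eqn:naturality_of_charge}. Fix a boundary torus $T_j$ of $X$ with $p(T_j) = T'_k$. Because $p|_{T_j}$ is $s$-characteristic, after choosing basepoints we have $p_*\pi_1(T_j) = s\,\pi_1(T'_k)$. Hence $p_*$ identifies $H_1(T_j;\ZZ)$ with $sH_1(T'_k;\ZZ)$, and the restriction of $p$ to $T_j$ has degree $s^2$.

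\textbf{Choosing the framing curves.} Since $s[b'_k]$ and $s[f'_k]$ lie in the image of $p_*$, there are unique classes $\beta_j, \varphi_j \in H_1(T_j;\ZZ)$ with $p_*\beta_j = s[b'_k]$ and $p_*\varphi_j = s[f'_k]$. Because $p$ is fiber preserving, $\varphi_j$ is the class of a regular fiber of $X$ in $T_j$, so $[f_j] = \varphi_j$. This is consistent with the fact that a characteristic cover has $v = s$ on fibers. We take $b_j$ to be a simple closed curve representing $\beta_j$; one exists because $\beta_j$ is primitive, being part of a basis.

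\textbf{Checking the required properties.}
\begin{enumerate}
\item[(i)] \emph{Unitarity.} Since $p$ is orientation preserving on $T_j$ with degree $s^2$, we have $s^2(\beta_j \wedge_j \varphi_j) = p_*\beta_j \wedge p_*\varphi_j = s^2([b'_k] \wedge [f'_k]) = s^2$. Hence $\beta_j \wedge_j \varphi_j = 1$, so $B$ is a unitary framing.
\item[(ii)] \emph{Covering of framed spaces.} The preimage $p^{-1}(b'_k) \cap T_j$ consists of closed curves, each mapping with degree $s$ onto $b'_k$. Indeed, the lift of $s\,b'_k$ closes up, and no smaller multiple lies in $s\pi_1(T'_k)$. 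So each component of $p^{-1}(b'_k)$ in $T_j$ represents $\beta_j$, and we may take $b_j$ to be one of them. Thus $p : (X,B) \to (X',B')$ is a covering.
\item[(iii)] \emph{Charge.} Since $p$ is a $(u,v)$-cover with $v = s$ and $uv = d$, equation \eqref{eqn:naturality_of_charge} gives $\ch(X,B) = \frac{u}{v}\ch(X',B') = \frac{d}{s^2}\ch(X',B')$, which is \eqref{eqn:charge_and_characteristic_covering}.
\end{enumerate}
The claim $v = s$ needs justification: a fiber circle in $T_j$ maps onto $f'_k$ with degree $s$, because $p_*[f_j] = s[f'_k]$. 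Moreover, every regular fiber has the same degree $v$ for a fiber-preserving cover of a connected space.

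\textbf{Part (b).} Let $\phi \in G$ with $\phi(T_j) = T_{j'}$. Then $p_*\phi_*\beta_j = p_*\beta_j = s[b'_k]$, and the classes in $H_1(T_{j'})$ with this image are unique. Hence $\phi_*\beta_j = \beta_{j'}$, and therefore $\phi_*[B] = [B]$ in $H_1(X;\ZZ)$. The number of boundary tori over $T'_k$ is $d/s^2$, since each covers with degree $s^2$. So
\[
p_*[B] = \sum_k \frac{d}{s^2}\, s\,[b'_k] = \frac{d}{s}[B'].
\]

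\textbf{Main obstacle.} The delicate point is the bookkeeping of orientations and degrees: the component $b_j$ must be homologous to the chosen lift $\beta_j$, and the fiber class $[f_j]$ must agree with $\varphi_j$ rather than $-\varphi_j$. I would settle both by using that $p$ preserves the orientations of $X$, and hence of its boundary tori and of the fibers.
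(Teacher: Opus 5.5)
Your proof is correct. Part (a) follows the paper's proof essentially step for step: you lift $s[b'_k]$ and $s[f'_k]$ through the $s$-characteristic boundary covers, take $b_j$ to be a component of $p^{-1}(b'_k)\cap T_j$, and read off the charge from the Luecke--Wu formula with $(u,v)=(d/s,s)$. Two steps that the paper only asserts you actually prove. Unitarity follows from $p_*a\wedge p_*b=\deg(p|_{T_j})\,(a\wedge b)=s^2(a\wedge b)$. That each component of $p^{-1}(b'_k)$ has degree exactly $s$ follows from your closing-up argument. One small point: your justification of $v=s$ is slightly circular as written. The clean reason is the same closing-up argument from (ii), applied to $f'_k$ instead of $b'_k$.

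For the invariance in part (b) your route differs from the paper's.
\begin{itemize}
\item The paper fixes coset representatives $\phi_{k,j}$ carrying $T_{k,1}$ to $T_{k,j}$. It uses that the stabilizer $S_k\cong(\ZZ/s\ZZ)^2$ is the deck group of $T_{k,1}\to T'_k$ and acts by translations, hence trivially on $H_1(T_{k,1})$. It then rewrites $\phi\phi_{k,j}=\phi_{k,\sigma(j)}\psi_j$ to see that $\phi_*$ permutes the classes $[b_{k,j}]$.
\item You instead use that $p\circ\phi=p$ and that $p_*$ is injective on $H_1$ of each boundary torus. This forces $\phi_*[b_j]$ to be the unique lift $\beta_{\phi(j)}$ of $s[b'_k]$.
\end{itemize}
Your argument is shorter and needs no stabilizer identification or coset bookkeeping. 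It also shows that any framing built as in (a) is automatically invariant under every deck transformation. Regularity of $p$ is used neither for $\phi_*[B]=[B]$ nor for $p_*[B]=\frac{d}{s}[B']$; only connectedness of $X$ is needed, to count $d/s^2$ tori over each $T'_k$. The paper's version spells out how $G$ permutes the boundary tori, which is the viewpoint it reuses later when averaging metrics. Logically, though, your uniqueness-of-lifts observation suffices, and the paper uses it implicitly when it asserts that $\phi_{k,j}(b_{k,1})$ is parallel to $b_{k,j}$.
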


\begin{proof}
    We first prove \textnormal{(a)}. We construct the unitary framing $B$ as follows. Let $T_j \subset \partial X$ be a boundary torus, and let $T'_k \subset \partial X'$ be the boundary torus such that $p(T_j)=T'_k$. Write $b'_k = B' \cap T'_k$, and let $f'_k$ be the class of an oriented regular fiber on $T'_k$. Since $B'$ is unitary, the classes $[b'_k]$ and $[f'_k]$ form an oriented basis for the lattice $\Gamma'_k \cong H_1(T'_k;\ZZ)$ and satisfy
    \begin{equation*}
    [b'_k] \wedge'_k [f'_k] = 1.
    \end{equation*}

    Since the restriction $p|_{T_j}: T_j \rightarrow T'_k$ is an $s$-characteristic cover, the corresponding sublattice $\Gamma_j < \Gamma'_k$ is
    \begin{equation*}
        \Gamma_j
        =
        s\ZZ \langle [b'_k] \rangle
        \oplus
        s\ZZ \langle [f'_k] \rangle .
    \end{equation*}
    Since $p$ is fiber-preserving, the class $[f_j]$ of an oriented regular fiber of $X$ on $T_j$ satisfies $p_*[f_j] = s[f'_k]$. 
    Choose $b_j$ to be a component of $p^{-1}(b'_k) \cap T_j$, oriented so that $p_*[b_j] = s[b'_k]$ in the sublattice $\Gamma_j < \Gamma'_k$. Then $b_j$ is primitive in $\Gamma_j$, and with respect to the induced orientation on $T_j$ we have
    \begin{equation*}
        [b_j] \wedge_j [f_j] = 1.
    \end{equation*}
    Doing this for every boundary component of $X$ defines a unitary framing $B$. Moreover, each $b_j$ is a component of $p^{-1}(B')$, so $p: (X,B) \rightarrow (X',B')$ is a covering of framed Seifert fibered spaces.

    It remains to compute the scaling factor. Since $p$ is $s$-characteristic on each boundary torus, the restriction of $p$ to each regular fiber has degree $s$. Hence $p$ is a $(d/s,s)$-cover in the sense of Luecke--Wu. Equation~\eqref{eqn:charge_and_characteristic_covering} then follows from Equation~\eqref{eqn:naturality_of_charge}.

    We now prove \textnormal{(b)}. Assume that $p$ is regular. Let $\partial X' = T'_1 \sqcup \cdots \sqcup T'_{n'}$.
    For each $k$, write $b'_k = B' \cap T'_k$. Since each boundary component of $X$ covers its image in $\partial X'$ with degree $s^2$, the preimage $p^{-1}(T'_k)$ consists of $l = \frac{d}{s^2}$ 
    many tori. Write $p^{-1}(T'_k) = T_{k,1} \sqcup \cdots \sqcup T_{k,l}$.

    Fix $k$. Since $p: X \rightarrow X'$ is regular, the deck group $G$ acts transitively on the collection of tori $\{T_{k,1},\ldots,T_{k,l}\}$. Let $S_k < G$ be the stabilizer of $T_{k,1}$. Then $S_k$ is the deck group of the restricted cover $p|_{T_{k,1}}: T_{k,1} \rightarrow T'_k$.
    Since this restricted cover is $s$-characteristic, we have $S_k \cong (\ZZ/s\ZZ)^2$.
    In particular, $S_k$ acts by translations on $T_{k,1}$, and therefore every element of $S_k$ acts trivially on $H_1(T_{k,1};\ZZ)$.

    Choose a curve $b_{k,1} \subset T_{k,1}$ as in the proof of \textnormal{(a)}. Thus $p_*[b_{k,1}] = s[b'_k]$. For each $j=1,\ldots,l$, choose an element $\phi_{k,j} \in G$ such that $\phi_{k,j}(T_{k,1}) = T_{k,j}$, and notice that $\phi_{k,j}(b_{k,1})$ is parallel to $b_{k,j} = B \cap T_{k,j}$, so $(\phi_{k,j})_*[b_{k,1}] = [b_{k,j}]$ and 
    \begin{equation*}
        p_*[b_{k,j}] = p_* (\phi_{k,j})_* [b_{k,1}] = s[b'_k].
    \end{equation*}

    We first compute the pushforward of $[B]$ by $p$. For each $k$, define $[B_k] = \sum_{j=1}^l [b_{k,j}]$.
    Then
    \begin{equation*}
        [B] 
        = \sum_{k=1}^{n'} [B_k] 
        = \sum_{k=1}^{n'} \sum_{j=1}^{l} [b_{k,j}].
    \end{equation*}
    Since $p_*[b_{k,j}] = s[b'_k]$ for every $j$, we get
    \begin{equation*}
        p_*[B]
        = \sum_{k=1}^{n'} \sum_{j=1}^l p_*[b_{k,j}]
        = \sum_{k=1}^{n'} \sum_{j=1}^l s[b'_k]
        = ls \sum_{k=1}^{n'} [b'_k]
        = \frac{d}{s}[B'].
    \end{equation*}

    It remains to show that $\phi_*[B]=[B]$ for every $\phi \in G$. Since each deck transformation preserves $p^{-1}(T'_k)$ setwise, it suffices to prove $\phi_*[B_k] = [B_k]$ for every $k$.

    Fix $k$ and $\phi \in G$. The deck transformation $\phi$ permutes the tori $T_{k,1},\ldots,T_{k,l}$. Let $\sigma \in \cS_l$ be the corresponding permutation such that $\phi(T_{k,j}) = T_{k,\sigma(j)}$. 
    For each $j$, the element $\phi\phi_{k,j}$ sends $T_{k,1}$ to $T_{k,\sigma(j)}$. Hence there exists $\psi_j \in S_k$ such that $\phi\phi_{k,j} = \phi_{k,\sigma(j)}\psi_j$.
    Since every element of $S_k$ acts trivially on $H_1(T_{k,1};\ZZ)$, we have $(\psi_j)_*[b_{k,1}] = [b_{k,1}]$. 
    Therefore
    \begin{equation*}
        \phi_*[B_k]
        = \sum_{j=1}^l \phi_*[b_{k,j}]
        = \sum_{j=1}^l (\phi\phi_{k,j})_*[b_{k,1}]
        = \sum_{j=1}^l (\phi_{k,\sigma(j)})_*(\psi_j)_*[b_{k,1}]
        = \sum_{j=1}^l [b_{k,\sigma(j)}]
        = [B_k].
    \end{equation*}
    This proves $\phi_*[B]=[B]$ for every $\phi \in G$, and completes the proof.
\end{proof}

\medskip

\begin{remark} \label{rmk:generality_of_characteristic_coverings}
    Characteristic covers are sufficiently flexible for our purposes. Indeed, any finite cover $M \rightarrow M'$ of graph manifolds can be refined to a finite cover
    \begin{equation*}
    \widetilde M \longrightarrow M \longrightarrow M'
    \end{equation*}
    such that the composite $\widetilde M \rightarrow M'$ is characteristic. The existence of such a characteristic refinement is proved in \cite[Proposition~4.4]{LueckeWu1997}. Moreover, after replacing $\widetilde M$ by the finite cover corresponding to the normal core of $\pi_1(\widetilde M)$ in $\pi_1(M')$, we may assume that the composite cover $\widetilde M \rightarrow M'$ is both regular and characteristic; see \cite[Proposition~4.2]{DerbezLiuWang2015}.

    Let $\widetilde G$ be the deck group of the regular cover $\widetilde M \rightarrow M'$. Then $M$ admits a $G$-invariant NPC metric, where $G=\Deck(M \rightarrow M')$, if and only if $\widetilde M$ admits a $\widetilde G$-invariant NPC metric. Indeed, a $G$-invariant NPC metric on $M$ descends to an NPC metric on $M'$, whose pullback to $\widetilde M$ is $\widetilde G$-invariant. Conversely, a $\widetilde G$-invariant NPC metric on $\widetilde M$ descends to an NPC metric on $M'$, whose pullback to $M$ is $G$-invariant. Therefore, in the proof of Theorem~\ref{thm:npc_metric_free_action_invariant}, we may assume that the covering $M \rightarrow M/G$ is characteristic.
\end{remark}

\subsection{The Metric Extension Criterion (MEC)} \label{subsec:leeb's_condition}

Here we recall the metric extension criterion stated by Leeb \cite[Lemma 2.5]{Leeb1995NPC3Manifolds}. We use the same convention for the boundary components of a compact Seifert fibered space $X \xrightarrow{\pi} O$ as in $\S$\ref{subsec:framing_and_characteristic_covers}. Since $X$ is aspherical, its regular fiber determines an infinite cyclic normal subgroup of $\pi_1(X)$, and $\pi_1(X)$ fits into the short exact sequence
\begin{equation} \label{eqn:ses_of_seifert_fibered_spaces_fundamental_group}
    1 \longrightarrow \langle f \rangle \cong \mathbb Z
    \overset{i_*}{\longrightarrow} \pi_1(X)
    \overset{\pi_*}{\longrightarrow} \pi_1^{\orb}(O)
    \longrightarrow 1,
\end{equation}
where $i: f \hookrightarrow X$ is the inclusion of a regular fiber. In particular, if the base orbifold $O$ is orientable, then $\langle f\rangle$ lies in the center of $\pi_1(X)$. 

A Riemannian metric $g$ on $X$ is \emph{compatible with the Seifert fibration of $X$} if all regular fibers of $X$ are geodesic curves of the same length.
If $g$ is NPC, then the universal Riemannian cover decomposes as a Riemannian product
\begin{equation*}
    (\tilde X, \tilde g) \cong Y \times \RR,
\end{equation*}
where $Y$ is non-positively curved, and $\RR$ is equipped with the standard Euclidean metric, cf.\ \cite[Lemma 1]{Eberlein1982}. The $\RR$-direction projects to the fibers of $X$ in the sense that the following diagram commutes:
% https://q.uiver.app/#q=WzAsNixbMSwwLCJZIFxcdGltZXMgXFxSUiJdLFsyLDAsIlkiXSxbMSwxLCJYIl0sWzIsMSwiTyJdLFswLDEsIlNeMSJdLFswLDAsIlxcUlIiXSxbNSwwLCJcXHRpbGRlIGkiXSxbMCwxLCJcXHRpbGRlXFxwaSJdLFsxLDNdLFs1LDRdLFs0LDIsImkiXSxbMCwyXSxbMiwzLCJcXHBpIl1d
\[\begin{tikzcd}
	\RR & {Y \times \RR} & Y \\
	{S^1} & X & O
	\arrow["{\tilde i}", from=1-1, to=1-2]
	\arrow[from=1-1, to=2-1]
	\arrow["{\tilde\pi}", from=1-2, to=1-3]
	\arrow[from=1-2, to=2-2]
	\arrow[from=1-3, to=2-3]
	\arrow["i", from=2-1, to=2-2]
	\arrow["\pi", from=2-2, to=2-3]
\end{tikzcd}\]
Here, $i$ (resp.\ $\tilde i$) is the inclusion of a fixed $S^1$ (resp.\ $\RR$) fiber into $X$ (resp.\ $Y \times \RR$), and $\pi$ (resp.\ $\tilde\pi$) is the projection to the base orbifold (resp.\ universal cover of the base orbifold).
The representation $\rho: \pi_1(X) \rightarrow \Isom(Y \times \RR)$ associated with the universal Riemannian cover decomposes as a pair of representations $(\rho', \phi)$. Here,
\begin{equation} \label{eqn:fiber_deck_action}
    \phi: \pi_1(X) \rightarrow \Isom(\RR) \cong \RR \rtimes \ZZ/2\ZZ
\end{equation}
is the deck action of $\pi_1(X)$ in the $\RR$-direction. Notice that $\phi(f) \in \RR \setminus \{0\}$, where $f \in \pi_1(X)$ is the regular fiber class. On the other hand, $\rho': \pi_1^{\orb}(O) \rightarrow \Isom(Y)$ is a cocompact discrete action that induces an NPC orbifold metric on $O$. 

To prove MEC, we need to work with the following standard presentations for $\pi_1(X)$. Let $(\alpha_k, \beta_k)_{k=1}^m$ be the Seifert invariants of $X$. Then $O$ has $m$ cone points, say of orders $\alpha_1,\ldots,\alpha_m$. Let $g$ be the genus of $|O|$ after capping the boundary components of $|O|$. Assume first that $O$ is orientable. Then, given any unitary framing $B = b_1 \cup \ldots \cup b_n$ of $X$, using the short exact sequence from Equation \eqref{eqn:ses_of_seifert_fibered_spaces_fundamental_group}, we have the following presentation.
\begin{equation} \label{eqn:orientable_seifert_fundamental_group_pres}
    \pi_1(X) =
    \Big\langle
        x_i, y_i, b_j, c_k, f
        \,\Big|\,
        \text{$f$ central},\;
        \prod_{i=1}^g [x_i,y_i] \prod_{j=1}^n b_j \prod_{k=1}^m c_k f^{-e_0},\;
        c_k^{\alpha_k} f^{\beta_k}
    \Big\rangle.
\end{equation}
Here, the generators $x_i, y_i, c_k$ are lifts of curve classes in $\pi_1^{\orb}(O)$, where $\{x_i, y_i\}_{i=1}^g$ are curve classes that generate the fundamental group of a closed orientable genus-$g$ surface, and $\{c_k\}_{k=1}^k$ are curve classes of the curves around the cone points. The generators $\{b_j\}_{j=1}^n$ are curve classes of the unitary framing after changing basepoints. The generator $f$ is the regular fiber class as in Equation \eqref{eqn:ses_of_seifert_fibered_spaces_fundamental_group}. We call the last relation in Equation \eqref{eqn:orientable_seifert_fundamental_group_pres} involving the product of all generators the \emph{long relation}.

If $O$ is non-orientable, then we may simply replace the generators $\{x_i, y_i\}_{i=1}^g$ with the ``crosscap'' generators $\{z_i\}_{i=1}^g$ that generate the fundamental group a closed non-orientable genus-$g$ surface. This gives us the following presentation.
\begin{equation} \label{eqn:non-orientable_seifert_fundamental_group_pres}
    \pi_1(X) =
    \Big\langle
        z_i, b_j, c_k, f
        \,\Big|\,
        z_i f z_i^{-1}f,\;
        [f,b_j],\; [f,c_k],\;
        \prod_{i=1}^g z_i^2 \prod_{j=1}^n b_j \prod_{k=1}^m c_k f^{-e_0},\;
        c_k^{\alpha_k} f^{\beta_k}
    \Big\rangle.
\end{equation}
The presentations in Equations \eqref{eqn:orientable_seifert_fundamental_group_pres} and \eqref{eqn:non-orientable_seifert_fundamental_group_pres} can be derived from the Seifert--van Kampen theorem by decomposing the base orbifold into a punctured surface together with neighborhoods of the cone points, and then lifting this decomposition to the Seifert fibration. We omit the details and refer the readers to \cite[$\S$2.2]{BoileauMaillotPorti2003} and \cite{Orlik1972,scott3manifolds}.

If $X$ admits an NPC metric with totally geodesic boundary, then the Cartan--Hadamard theorem implies that $Y$ is a simply-connected subspace of $\RR^2$, so the base orbifold $O$ admits a Euclidean or hyperbolic metric with totally geodesic boundary. In particular, $X$ admits a locally homogeneous metric modeled on $\EE^3$ or $\HH^2 \times \RR$. 
However, if we are interested in NPC metrics on $X$ that restrict to specific metrics on the boundary components, which is always the case when we study manifolds by gluing Seifert fibered pieces together, then not every such $X$ admits an NPC metric extending a given collection of flat metrics on the boundary tori, cf.\ \cite{Leeb1995NPC3Manifolds,BuyaloSvetlov2005}. 

Leeb \cite[Lemma 2.5]{Leeb1995NPC3Manifolds} gave a homological criterion for when a flat metric on $\partial X$ can be extended to an NPC metric on $X$. We call this criterion MEC, and we recall and state Leeb's result using the language of unitary framings and charge. 

A flat metric $g$ on a torus $T^2$ induces an inner product $\sigma_g$ on the real vector space $H_1(T^2; \RR)$ by setting $\sigma_g([a], [a])$ to be the squared length of the curve $a$ for any primitive curve classes $[a] \in H_1(T^2; \ZZ)$. 
Given any diffeomorphism $f: T \rightarrow T$, we have
\begin{equation*}
    \sigma_{f^*g} ([a], [b]) = \sigma_g(f_*[a], f_*[b]).
\end{equation*}
In particular, if $f \in \Diff_0(T)$, then the induced maps $f_*$ and $f^*$ on homology are trivial, so $\sigma_{f^*g} = \sigma_g$.

\medskip

\begin{proposition}[MEC] \label{prop:leeb_condition}
    Given a compact orientable Seifert fibered space $X \rightarrow O$ with boundary components $T_1, \ldots, T_n$ such that $X$ admits a totally geodesic NPC metric compatible with its Seifert fibration, let $B = b_1 \cup \ldots \cup b_n$ be any unitary framing of $X$. 
    For each $j = 1, \ldots, n$, let $f_j$ be a regular fiber on $T_j$, and let $g_j$ be a flat metric on $T_j$. Then the following statements hold.
    \begin{enumerate}
        \item[\textnormal{(a)}] The metrics $g_j$ extend to an NPC metric $g$ on $X$ compatible with the Seifert fibration of $X$, for which all the $T_j$ are totally geodesic, if and only if there exists $l \in \RR_{>0}$ such that $\sigma_{g_j}([f_j], [f_j]) = l^2$ for all $j$, and
        \begin{equation} \label{eqn:leeb_condition}
            \sum_{j=1}^n \sigma_{g_j}([f_j], [b_j]) = \ch(X, B) \, l^2.
        \end{equation}
        Here, $l$ is the length of the regular fiber of $(X, g)$.

        \item[\textnormal{(b)}] The extended NPC metric $g$ can be constructed to be flat on a collar neighborhood of the boundary components.
    \end{enumerate}
\end{proposition}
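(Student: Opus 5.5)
Both directions go through the product structure $(\tilde X,\tilde g)\cong Y\times\RR$ and the presentations \eqref{eqn:orientable_seifert_fundamental_group_pres}--\eqref{eqn:non-orientable_seifert_fundamental_group_pres}. The guiding observation is that the fiber-direction component $\phi$ of the holonomy from \eqref{eqn:fiber_deck_action}, evaluated on the long relation, yields \eqref{eqn:leeb_condition}. I treat the orientable base first and then indicate the changes for a non-orientable base.

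\emph{Necessity.} Suppose $g$ is an NPC metric on $X$ compatible with the fibration and with totally geodesic boundary. All regular fibers are closed geodesics of the same length $l$, so $\sigma_{g_j}([f_j],[f_j])=l^2$ for every $j$. Lift to $Y\times\RR$. Each boundary torus $T_j$ lifts to a flat $\gamma_j\times\RR$, where $\gamma_j$ is a geodesic boundary line of $Y$. The element $b_j$ acts on this flat as a translation, with $\RR$-component $\phi(b_j)$ and $Y$-component of length $\ell_j$ along $\gamma_j$. Since $f$ translates by $\phi(f)=l$ purely in the $\RR$-direction, we get $\sigma_{g_j}([f_j],[b_j])=l\,\phi(b_j)$.

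Because $O$ is orientable and $X$ is orientable, $\phi$ takes values in the translation subgroup $\RR$, so $\phi$ is a homomorphism to an abelian group. It therefore factors through $H_1(X;\QQ)$ on the rational span, giving
\[
\phi\Big(\sum_j (i_j)_*[b_j]\Big)=\ch(X,B)\,\phi(f)=\ch(X,B)\,l .
\]
Concretely, applying $\phi$ to the long relation and to $c_k^{\alpha_k}f^{\beta_k}=1$ gives $\sum_j\phi(b_j)=(e_0+\sum_k\beta_k/\alpha_k)\,l$, using $\phi([x_i,y_i])=0$. Multiplying by $l$ yields \eqref{eqn:leeb_condition}.

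When $O$ is non-orientable, the $z_i$ act on $\RR$ by reflections, so $\phi(z_i^2)=0$ automatically. Restricting to the orientation double cover of the base and using the naturality \eqref{eqn:naturality_of_charge}, I expect the same identity to follow.

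\emph{Sufficiency.} This is the main obstacle. Given the $g_j$ satisfying the two conditions, write $\sigma_{g_j}([f_j],[b_j])=l\,t_j$ with $\sum_j t_j=\ch(X,B)\,l$. The plan is as follows.
\begin{enumerate}
\item Choose a hyperbolic (or Euclidean) orbifold metric on $O$ with geodesic boundary whose $j$-th boundary component has length $\ell_j=\sqrt{\sigma_{g_j}([b_j],[b_j])-t_j^2}$. This is possible by Teichm\"uller theory in the hyperbolic case, which is where the hypothesis that $X$ admits an NPC metric (so that $O$ is not spherical) enters. The Euclidean case forces an annulus base, where the lengths must match and the corresponding conditions must be checked separately.
\item Build a homomorphism $\phi:\pi_1(X)\to\RR$ with $\phi(f)=l$, $\phi(b_j)=t_j$, $\phi(c_k)=-\beta_k l/\alpha_k$ and $\phi(x_i),\phi(y_i)$ arbitrary (with reflections on the $z_i$ in the non-orientable case). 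This is well defined precisely because of \eqref{eqn:leeb_condition}, since the long relation then maps to $0$.
\item The product action $(\rho',\phi)$ on $\HH^2_{\mathrm{conv}}\times\RR$ is free, discrete and cocompact. Its quotient is a locally product NPC metric on a Seifert fibered space that is fiber-preservingly homotopy equivalent, and hence diffeomorphic, to $X$. Its boundary tori are flat with the prescribed $\sigma_{g_j}$.
\item Flat metrics on a torus are determined by $\sigma$ up to isotopy, so adjust by a diffeomorphism isotopic to the identity near $\partial X$ to match $g_j$ exactly. This step is flagged for later checking.
\end{enumerate}

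For (b), the locally product metric $\HH^2\times\RR$ is already flat near the geodesic boundary only if the base metric is flat near $\partial O$. So modify the base metric in a collar of each boundary geodesic to a flat cylinder: take a collar where the curvature transitions from $-1$ to $0$, which keeps curvature $\le 0$ and keeps the boundary geodesic. The resulting product metric is then flat on a collar of $\partial X$.
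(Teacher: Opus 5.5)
Your proposal follows the paper's proof essentially step for step. Necessity comes from evaluating the fiber holonomy $\phi$ on the long relation. Sufficiency pairs a base representation $\rho'$ realizing $\ell_j^2=\sigma_{g_j}([b_j],[b_j])-\sigma_{g_j}([f_j],[b_j])^2/l^2$ with the translation homomorphism $\phi$ on the standard generators. Part (b) flattens the base metric in a boundary collar.

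For (b), the paper makes the collar explicit with a warped product $dt^2+f_j(t)^2ds^2$, where $f_j$ is convex, $f_j\equiv 1$ near $0$ and $f_j=\cosh t$ near $r_j$, so the curvature $-f_j''/f_j$ stays $\le 0$. You should supply such a function rather than just asserting a curvature transition. Your step 4 is correct and fills a point the paper passes over: it matches $g_j$ itself, not just $\sigma_{g_j}$, by inserting an isometry isotopic to the identity in a collar.

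For a non-orientable base you do not need the double cover. The elements $b_j$, $c_k$, $f$ commute with $f$, so $\phi$ sends them to translations. Together with $\phi(z_i^2)=0$, the long relation gives $\sum_j\phi(b_j)=\ch(X,B)\,\phi(f)$ directly, as in the paper. Here $\ch(X,B)$ has to be read as $e_0+\sum_k\beta_k/\alpha_k$, since $2[f]=0$ in $H_1(X;\ZZ)$ when $O$ is non-orientable.

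The real gap is the Euclidean case, which you leave open and the paper dismisses as ``analogous''. Besides the annulus, the base can also be a M\"obius band or $D^2(2,2)$, both giving $K\tilde\times I$. These have one boundary component, so scaling realizes any $\ell_1$ and your construction goes through.

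For the annulus, however, the gap cannot be closed. For $X=T^2\times I$ the two boundary orientations are opposite, so unitarity forces $[b_1]+[b_2]=k[f]$ and $\ch(X,B)=k$. MEC then reduces to equal fiber lengths plus $\sigma_{g_1}([f],[b_1])=\sigma_{g_2}([f],[b_1])$, leaving $\sigma_{g_j}([b_1],[b_1])$ unconstrained. On the other hand, Gauss--Bonnet forces an NPC base annulus with geodesic boundary to be a flat cylinder, so $\ell_1=\ell_2$ is necessary.

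Concretely, take fiber $\theta$ and $g_1=ds^2+d\theta^2$, $g_2=4\,ds^2+d\theta^2$. These satisfy MEC with $\ch(X,B)=0$ but give $\ell_1=1\neq 2=\ell_2$, so they do not extend. Thus the ``if'' direction of (a) is false for $T^2\times I$ without the extra hypothesis $\ell_1=\ell_2$. Your instinct to flag this case was right, but neither your argument nor the paper's can establish the statement there as written.
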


\begin{proof}
    We first prove (a).
    Assume first that $X$ has an NPC metric $g$ restricting to each $g_j$ on $T_j$. Let $\rho: \pi_1(X) \rightarrow \Isom(Y \times \RR)$ be the isometric deck action of $\pi_1(X)$ on the universal Riemannian cover $Y \times \RR$. Consider the representation $\phi: \pi_1(X) \rightarrow \Isom(\RR)$ induced by $\rho$. Observe that $\phi(f)$ is the length of the regular fiber, where $f$ is the regular fiber generator in Equation \eqref{eqn:orientable_seifert_fundamental_group_pres} or \eqref{eqn:non-orientable_seifert_fundamental_group_pres}. Moreover, for any $x \in \pi_1(X)$ whose monodromy action on the fiber circle preserves the orientation of a regular fiber, $\phi(x)$ measures the signed translation distance of $\rho(x) \in \Isom(Y \times \RR)$ in the $\RR$-direction. Otherwise, $\phi(x)$ is a reflection. Since the monodromy action of $b_j$ preserves the orientation of a regular fiber, we have
    \begin{equation} \label{eqn:leeb_condition_proof_1}
        \sigma_{g_j}([f_j], \, p[f_j] + q[b_j]) = p \phi(f)^2 + q \phi(f)\phi(b_j).
    \end{equation}
    Let $l = \phi(f)$. 
    Taking $p = 1$ and $q = 0$, we see that $\sigma_{g_j}([f_j], [f_j]) = \phi(f)^2 = l^2$. On the other hand, taking $p = 0$ and $q = 1$, we get 
    \begin{equation*}
        \sum_{j=1}^n \sigma_{g_j}([f_j], [b_j])
        = \phi(f) \cdot \sum_{j=1}^n \phi(b_j)
    \end{equation*}
    Now, if the base orbifold $O$ is orientable, then all generators of $\pi_1(X)$ in Equation \eqref{eqn:orientable_seifert_fundamental_group_pres} preserve the orientation of a regular fiber of $X$, so the long relation in Equation \eqref{eqn:orientable_seifert_fundamental_group_pres} gives us 
    \begin{equation*}
        \sum_{j=1}^n \phi(b_j) + \sum_{k=1}^m \phi(c_k) - e_0 \phi(f) 
        = \sum_{j=1}^n \phi(b_j) + \left( - \sum_{k=1}^m \frac{\beta_k}{\alpha_k} - e_0 \right) \phi(f)
        = 0.
    \end{equation*}
    In other words, $\sum_{j=1}^n \phi(b_j) = \ch(X, B) \phi(f)$. Combining this with Equation \eqref{eqn:leeb_condition_proof_1} gives us Equation \eqref{eqn:leeb_condition}. The case where $O$ is non-orientable is similar except that the monodromy action of the ``crosscap'' generators $z_i$ in Equation \eqref{eqn:non-orientable_seifert_fundamental_group_pres} revert the orientation of a regular fiber, so $z_i^2$ is a reflection, which implies $\phi(z_i^2) = 0$.

    For the converse direction, suppose $B = b_1 \cup \ldots \cup b_n$ is a unitary framing of $X$ and that $\sigma_{g_j}([f_j], [f_j]) = l$ is independent of $j$. We assume that $X$ is modeled on $\HH^2 \times \RR$ (the Euclidean case is analogous, so we omit it). Choose $\rho': \pi_1^{\orb}(O) \rightarrow \Isom(\HH^2)$ so that the squared length $s_j^2$ of the $j$-th boundary curve is the length of the component of $b_j$ orthogonal to $f_j$ in the flat torus metric $g_j$:
    \begin{equation*}
        s_j^2 = \sigma_{g_j}([b_j], [b_j]) - \frac{\sigma_{g_j}([f_j], [b_j])^2}{\sigma_{g_j}([f_j], [f_j])}.
    \end{equation*}
    Such a representation exists because hyperbolic structures on $O$ can realize arbitrary boundary curve lengths. Construct $\phi: \pi_1(X) \rightarrow \Isom(\RR)$ by specifying the images of the generators in Equation \eqref{eqn:orientable_seifert_fundamental_group_pres} or \eqref{eqn:non-orientable_seifert_fundamental_group_pres}. Specifically, we let $\phi$ map $f$ to $\sqrt{\sigma_{g_j}([f_j], [f_j])}$ and $b_j$ to $\sigma_{g_j}([f_j], [b_j]) / \phi(f)$. We then map $c_k$ to $-\frac{\beta_k}{\alpha_k}\,\phi(f)$. If $O$ is orientable, then we send $x_i, y_i \mapsto 0$. If $O$ is non-orientable, we send $z_i$ to the reflection across $0$. To check that $\phi$ is well-defined, we must check that $\phi$ respects the relations in Equation \eqref{eqn:orientable_seifert_fundamental_group_pres} or \eqref{eqn:non-orientable_seifert_fundamental_group_pres}. The only non-trivial relation to check is the long relation, which holds by construction of $\phi$ and Equation \eqref{eqn:leeb_condition}.
    
    The pair $(\rho', \phi)$ then induces a cocompact discrete faithful representation $\rho: \pi_1(X) \rightarrow \Isom(\HH^2 \times \RR)$ by
    \begin{equation*}
        \rho(a) 
        = (\rho' \circ \pi_* (a), \phi(a)) 
        \in \Isom(\HH^2) \times \Isom(\RR) 
        \cong \Isom(\HH^2 \times \RR),
    \end{equation*}
    where $\pi_*: \pi_1(X) \rightarrow \pi_1^{\orb}(O)$ is the induced homomorphism. The metric $g$ induced by $\rho$ satisfies the desired properties.

    For (b), notice that if $X$ is modeled on $\EE^3$, then the geometric metric induced by the representation $\rho: \pi_1(X) \rightarrow \Isom(\EE^3)$ is already flat, so we only need to consider the case where $X$ is modeled on $\HH^2 \times \RR$. Since we are working locally near $\partial X$, we may remove the exceptional fibers of $X$ and treat it as a circle bundle over a hyperbolic surface $\Sigma$ with geodesic boundary curves. Near each boundary component $T_j \subset \partial X$, there is a unique geodesic $\gamma_j$ in $\HH^2$ that projects to $T_j \cap \Sigma$. We can choose $r_j > 0$ small enough so that the $r_j$-neighborhood $N_{r_j}$ of $T_j$ in $X$ pulls back under the universal covering map to
    \begin{equation*}
        \tilde N_{r_j} = \gamma_j \times \RR \times [0,r_j).
    \end{equation*}
    Here, $\tilde N_{r_j}$ projects to $\gamma_j \times [0,r_j) \subset \HH^2$. The metric on $\HH^2 \times \RR$ restricts to a warped product metric $\tilde g_j$ on $\tilde N_{r_j}$
    \begin{equation*}
        \tilde g_j = \cosh^2(t) \, ds^2 + d\theta^2 + dt^2.
    \end{equation*}
    Here, $s$ is the coordinate in the direction of the geodesic $\gamma_j$, $t$ is the coordinate in $\HH^2$ normal to the geodesic $\gamma_j$, and $\theta$ is the coordinate in the $\RR$-factor of $\HH^2 \times \RR$.
    In particular, $N_{r_j}$ is the quotient of $\tilde N_{r_j}$ by an isometric action of $\pi_1(T_j)$ given by a representation $\rho_j: \pi_1(T_j) \hookrightarrow \Isom(\tilde N_{r_j}, \tilde g_j)$ sending
    \begin{equation*}
        \rho_j(\alpha)(s, \theta, t) = (\rho_j'(\alpha)(s, \theta), t)
        \quad \text{for all } \alpha \in \pi_1(T_j).
    \end{equation*}
    Here, $\rho_j'$ is the action of $\pi_1(T_j)$ on $\pi^{-1}(T_j) = \gamma_j \times \RR \times \{0\}$.

    To flatten the metric near $T_j$, we take a smooth convex functon $f_j: [0,r_j) \rightarrow \RR_{>0}$ such that $f_j \equiv 1$ in a neighborhood of $0$, and $f_j(t) = \cosh(t)$ in a neighborhood of $r_j$. We replace $\tilde g_j$ with the following warped product metric:
    \begin{equation*}
        \tilde h_j := f_j^2(t)\, ds^2 + d\theta^2 + dt^2.
    \end{equation*}
    Observe that the metric $\tilde h_j$ is NPC (see \cite[$\S$7]{bishop-oneill-1969} for computing the sectional curvature of $\tilde h_j$). Also, using the original action $\rho_j$, $\pi_1(T_j)$ still acts by isometries on $(\tilde N_{r_j}, \tilde h_j)$. The quotient manifold $N_{r_j}$ is equipped with the metric $h_j$ that restricts to $g_j$ on $T_j$, is flat near $T_j$, and agrees with $g$ outside the $r_j-\varepsilon$-neighborhood of $T_j$ for some small $\varepsilon > 0$. This proves (b).
\end{proof}

\medskip

\begin{corollary} \label{cor:leeb_condition}
    Let $M$ be an orientable graph manifold with a collection of incompressible tori $\cT = \{T_1, \ldots, T_n\}$ such that the components
    \begin{equation*}
        M \mid \cT = X_1 \sqcup \cdots \sqcup X_m.
    \end{equation*}
    in the splitting are all Seifert fibered. Then, a collection of flat metrics $g_j$ on $T_j$ extends to an NPC metric $g$ on $M$ such that each $T_j$ is totally geodesic if, for each Seifert fibered component $X_i$, there exists a unitary framing $B_i$ satisfying the conditions in Proposition \ref{prop:leeb_condition}.
\end{corollary}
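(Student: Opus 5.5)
The plan is to apply Proposition~\ref{prop:leeb_condition} to each Seifert fibered piece separately and then glue the resulting local metrics along the tori of $\cT$.

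Fix a component $X_i$ of $M \mid \cT$. Its boundary consists of copies of tori $T_j \in \cT$; a torus appearing twice in $\partial X_i$ is treated as two separate boundary components, each carrying $g_j$ (suitably identified). By hypothesis there is a unitary framing $B_i$ of $X_i$ satisfying the conditions of Proposition~\ref{prop:leeb_condition}(a). That is, the boundary metrics give every regular fiber of $X_i$ on its boundary the same squared length $l_i^2$, and Equation~\eqref{eqn:leeb_condition} holds for $B_i$. We also need $X_i$ to admit a totally geodesic NPC metric compatible with its fibration. This holds because each JSJ-type piece of a graph manifold carrying these tori has a Euclidean or hyperbolic base orbifold; the author's hypothesis is implicitly that Proposition~\ref{prop:leeb_condition} applies, and I would state this explicitly. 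Proposition~\ref{prop:leeb_condition}(a) then gives an NPC metric $h_i$ on $X_i$ that restricts to the given $g_j$ on each boundary torus, and each boundary torus is totally geodesic.

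Next, invoke Proposition~\ref{prop:leeb_condition}(b) to modify $h_i$ so that it is flat on a collar of $\partial X_i$. By the construction there, the collar of $T_j$ is isometric to $(T_j, g_j) \times [0, \varepsilon)$, a Riemannian product near the boundary: the warping function equals $1$ near $t=0$, so the metric is $ds^2 + d\theta^2 + dt^2$ quotiented by a lattice acting trivially in $t$. In the Euclidean case I would also arrange a product collar. There, a flat manifold with totally geodesic flat boundary has a product collar automatically by the exponential map in the normal direction.

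Now glue. For each $T_j \in \cT$, the neighborhood $N(T_j) \cong T_j \times I$ is adjacent to two pieces $X_a$ and $X_b$, possibly with $a = b$. On each side we have product collars $(T_j, g_j) \times [0, \varepsilon)$, whose identifications with $T_j$ agree up to isotopy with the gluing maps. I would put the product metric $g_j + dt^2$ on $N(T_j)$. The metric obtained on $M$ is then smooth, since near each gluing locus it is a product metric on both sides. It is NPC, since curvature is local and each region is NPC. Each $T_j$ is a totally geodesic slice of a product, hence totally geodesic.

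The main obstacle is the bookkeeping of the identifications. The gluing map from $\partial X_a$ to $\partial X_b$ must be an isometry of the flat metrics. Since each $g_j$ is defined on $T_j$ itself and both $h_a$ and $h_b$ restrict to it under the inclusions of the boundary tori (which are isotopic to $T_j$ through the collar of $N(T_j)$), the required maps are isometries, possibly after an isotopy. By the remark preceding the proposition, such an isotopy does not change $\sigma_{g_j}$. So I would fix the parametrizations of $N(T_j)$ first, define $g_j$ on $T_j \times \{0\}$, and transport it by the product structure, so that the conditions checked in the MEC refer to the same flat metric on both sides.
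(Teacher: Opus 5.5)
Your proposal is correct and is essentially the paper's own proof: apply Proposition~\ref{prop:leeb_condition}(a) to each piece, use part (b) to make each $h_i$ equal to $g_j + dt^2$ near $\partial X_i$ (your remark that this collar is a Riemannian product, not merely flat, is exactly what makes the glued metric smooth), and put $g_j + dt^2$ on each $N(T_j)$. One caveat applies equally to the paper's proof, because it lies inside Proposition~\ref{prop:leeb_condition}(a) rather than in your gluing: when $\cT$ contains two parallel tori, the piece between them is $T^2 \times I$ with Euclidean annulus base, and there MEC forces only $\sigma_{g_0}([f],\cdot)=\sigma_{g_1}([f],\cdot)$ on the two boundary tori (comparing via $H_1(T^2\times I)$), whereas any NPC metric on $T^2 \times I$ with totally geodesic boundary forces $\sigma_{g_0}=\sigma_{g_1}$ (each $\gamma \in \pi_1$ has an axis in each boundary plane of the universal cover, and all axes of $\gamma$ have the same translation length), so your Euclidean-case step needs this stronger hypothesis for such pieces.
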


\begin{proof}
    By Proposition \ref{prop:leeb_condition}(a), we have a metric $h_i$ on each Seifert fibered component $X_i$ that restricts to $g_j$ on $T_j$ whenever $T_j \subset X_i$. Additionally, Proposition \ref{prop:leeb_condition}(b) implies we can make $h_i$ flat in a collar neighborhood of $\partial X_i$. We thus obtain a smooth NPC metric $g$ on $M$ by the following construction: Set $g|_{X_i} = h_i$, and set $g$ on tubular neighborhoods $T_j \times I$ in $M \setminus (M \mid \cT)$ to be the product metric $g_j + dt^2$. Since $h_i$ is flat in a collar neighborhood of $T_j \subset \partial X_i$ and $h_i|_{T_j} = g_j$, the metric $g$ is smooth. 
\end{proof}

\medskip

\begin{example}
    As mentioned earlier in the section, a graph manifold need not admit an NPC metric even when each Seifert fibered piece in its JSJ decomposition admits one. Figure \ref{fig:npc_examples} gives some simple examples of graph manifolds. Here we may assume that each JSJ piece $X_i$ is an $S^1$-bundle over a compact surface $\Sigma_i$. Then, $B_i = \Sigma_i \cap \partial X_i$ is a unitary framing such that $\ch(X_i, B_i) = 0$. By Proposition \ref{prop:leeb_condition} and Corollary \ref{cor:leeb_condition}, $M_1$ admits an NPC metric if and only if the induced map $\phi_*: H_1(T_-) \rightarrow H_1(T_+)$ sends $([f_-], [b_-])$ to $\pm ([b_+], [f_+])$. The manifolds $M_2$ and $M_3$ admit NPC metrics if and only if the entries of the matrices representing the gluing maps $\phi_i$, with respect to the chosen unitary framing, satisfy a system of quadratic equations. This condition is discussed in \cite{BuyaloSvetlov2005}. See also \cite{Leeb1995NPC3Manifolds} for a discussion of conditions under which linear graph manifolds admit NPC metrics.
\end{example}

\begin{figure}[ht]
    \centering
    \footnotesize
    
    \def\svgwidth{\columnwidth}
    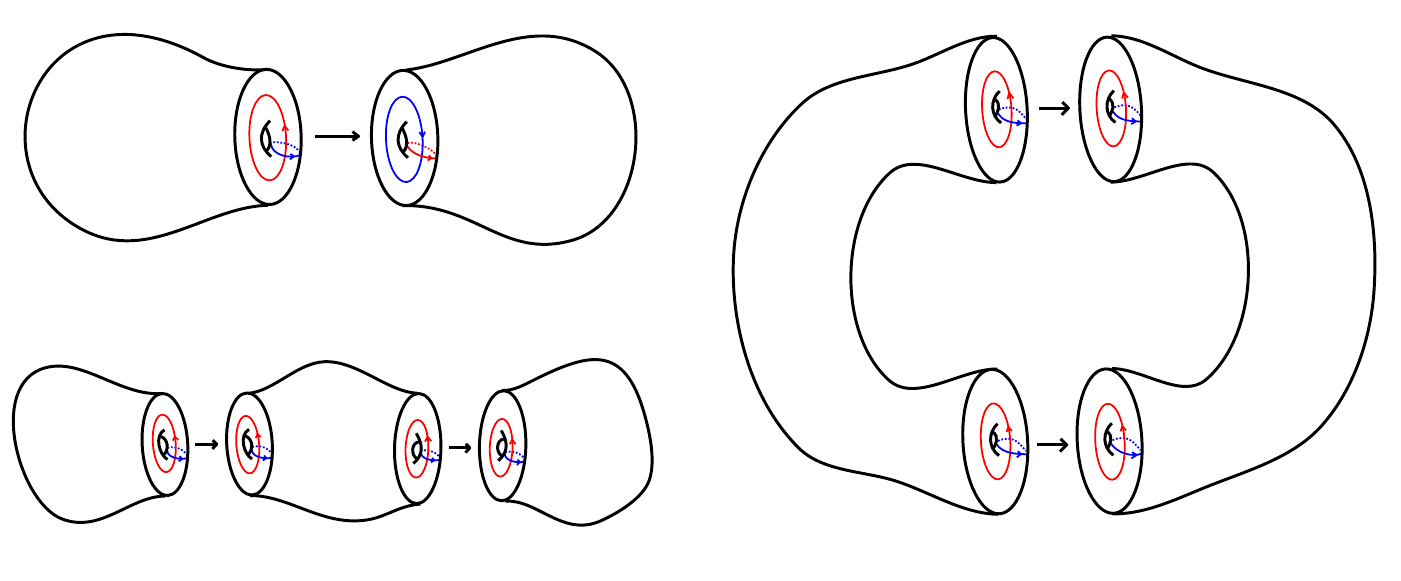

    \caption{Examples of graph manifolds and unitary framings $B_i$ for their JSJ pieces $X_i$. The red curves represent oriented regular fibers, and the blue curves represent the oriented curves in the unitary framings $B_i$.}
    \label{fig:npc_examples}
\end{figure}

\section{Proof of the Main Theorem} \label{sec:proof} 

In this section, we prove Theorem \ref{thm:npc_metric_free_action_invariant}. Let $M$ be a closed orientable graph manifold, and let $M' = M/G$ be the quotient of $M$ by a free, orientation-preserving diffeomorphism action of $G$. Denote the covering map by $p: M \rightarrow M'$. 

Let $\cT' = \{T'_1, \ldots, T'_{n}\}$ be a JSJ collection of incompressible tori in $M'$, and let $\Sigma' = \bigcup_{j=1}^n T'_j$ be their disjoint union. Since the action of $G$ on $M$ is free, the preimage of each torus $T'_i \in \cT'$ is a disjoint union of tori $T_{i,1}, \ldots, T_{i,n_i}$. By Lemma \ref{lem:JSJ_of_quotient_pulls_back_to_JSJ_of_original}, the collection
\begin{equation} \label{eqn:pullback_JSJ_tori}
    \cT = \{ T_{i,j} \subset p^{-1}(T'_i) \mid i = 1, \ldots, n; j = 1, \ldots, n_i \}
\end{equation}
is a collection of incompressible tori in $M$ such that every component of $M \mid \cT$ is Seifert fibered. Let $\Sigma = p^{-1}(\Sigma')$ be the union of the incompressible tori in $\cT$.

As discussed in $\S$\ref{subsec:roadmap}, if we can construct a $G$-invariant flat metric $\bar g$ on $\Sigma$ satisfying MEC on each component of $M \mid \cT$, then the induced flat metric $p_* \bar g$ on $\Sigma'$ also satisfies MEC on each component of $M' \mid \cT'$. We therefore first prove Theorem \ref{thm:npc_metric_free_action_invariant} under this assumption, which we state as the following proposition.

\medskip

\begin{proposition} \label{prop:npc_metric_on_M_equivariant_on_T}
    There exists a flat metric $g$ on $\Sigma$ that can be extended to an NPC metric on $M$, such that the metric
    \begin{equation} \label{eqn:sum_of_flat_metrics_is_flat}
        \bar g = \sum_{\phi \in G} \phi^* g
    \end{equation}
    on $\Sigma$ is flat, and for all $i = 1, \ldots, n$ and $j = 1, \ldots, n_i$, we have 
    \begin{equation} \label{eqn:sum_of_flat_metrics_is_flat 2}
        \sigma_{\bar g|_{T_{i,j}}} = \sum_{\phi \in G} \sigma_{(\phi^* g)|_{T_{i,j}}}.
    \end{equation}
    In particular, $\bar g$ extends to an NPC metric on $M$.
\end{proposition}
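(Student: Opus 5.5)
The plan is to build $g$ from an NPC metric on $M$ adapted to $\Sigma$, and then to fix the one real difficulty: a sum of flat metrics on a torus need not be flat. Once $\bar g$ is flat and \eqref{eqn:sum_of_flat_metrics_is_flat 2} holds, the extension follows from the linearity of MEC.

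\textbf{Step 1: an adapted NPC metric.} By Corollary~\ref{cor:aspherical_implies_irreducible} and Lemma~\ref{lem:JSJ_of_quotient_pulls_back_to_JSJ_of_original}, each component of $M\mid\cT$ is Seifert fibered. By Remark~\ref{rmk:generality_of_characteristic_coverings} we may take $p$ to be regular and characteristic. Leeb's theory lets us isotope the tori so that some NPC metric on $M$ is compatible with the Seifert fibrations and makes each torus of $\cT$ totally geodesic and flat. One subtlety: the tori of $\cT$ may be parallel, coming from $K\tilde\times I$ pieces, rather than JSJ tori. In that case we take parallel copies of a flat totally geodesic JSJ torus inside a flat product collar, which is available by Proposition~\ref{prop:leeb_condition}(b) and Corollary~\ref{cor:leeb_condition}. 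Restricting this metric to $\Sigma$ gives a flat $g$, and by Proposition~\ref{prop:leeb_condition}(a) it satisfies MEC on every component of $M\mid\cT$.

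\textbf{Step 2: making $\bar g$ flat (main obstacle).} For each $\phi\in G$, the pullback $\phi^*g$ restricted to $T_{i,j}$ is flat but in general not translation invariant with respect to $g$'s affine structure. The fix is to use the freedom of isotopy. On each torus $T$, fix the affine (flat) structure of $g|_T$. We want to replace $g$ by a flat metric whose pullbacks are all flat in one common affine structure; sums of translation-invariant quadratic forms are then flat. To do this:
\begin{itemize}
\item[(i)] Choose a $G$-invariant affine structure on $\Sigma$. The stabilizer of each torus is finite, and a finite group acting on $T^2$ preserving orientation on homology conjugates, via Lemma~\ref{lem:properties_of_diff_of_torus}-type arguments, to an affine action. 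Choose such a conjugacy on one torus per orbit and transport it by $G$.
\item[(ii)] Take $g$ to be the flat metric, translation invariant in this affine structure, with the same $\sigma_g$ as before.
\end{itemize}
Since $g$ changes only by an isotopy, the original torus flat metric is replaced by one isometric to it through a diffeomorphism isotopic to the identity. Hence $\sigma$ is unchanged, MEC still holds, and the extension to $M$ persists after composing with the isotopy. Each $\phi$ is now affine, so every $\phi^*g$ is translation invariant and $\bar g$ is flat. Each $\sigma$ is then the induced constant quadratic form on $H_1(T;\RR)$, which gives \eqref{eqn:sum_of_flat_metrics_is_flat 2} by linearity of pullback of forms.

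\textbf{Step 3: linearity of MEC.} For each component $X$ of $M\mid\cT$, choose the $G$-compatible unitary framings of Proposition~\ref{prop:charge_and_characteristic_covering}(b). Then $\phi$ maps $X$ to $\phi(X)$, carrying fibers to fibers and framings to framings up to homology. This works because $G$ preserves orientation, and fibers are canonical up to sign; the sign is handled by fixing fiber orientations $G$-equivariantly where possible, and since $\sigma(f,f)$ and $\sigma(f,b)$ are insensitive to a simultaneous sign flip, this causes no trouble.

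With these framings, $\phi^*g$ satisfies MEC on $X$ with the same fiber length $l$, since $\phi^*g$ on $X$ corresponds to $g$ on $\phi(X)$ and the charge is $G$-invariant. Summing over $G$ using \eqref{eqn:sum_of_flat_metrics_is_flat 2} gives
\[
\sigma_{\bar g}(f_j,f_j)=|G|\,l^2
\quad\text{and}\quad
\sum_j\sigma_{\bar g}(f_j,b_j)=\ch(X,B)\,|G|\,l^2 .
\]
This is exactly MEC with fiber length $\sqrt{|G|}\,l$. Corollary~\ref{cor:leeb_condition} then extends $\bar g$ to an NPC metric on $M$.

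I expect Step 2 to be the main obstacle: arranging simultaneous affinity of the $G$-action on $\Sigma$ while preserving the extension property.
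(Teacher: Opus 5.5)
Your proposal is correct and follows essentially the paper's route. You use Corollary~\ref{cor:geometric_JSJ}, then an isotopy on $\Sigma$ extended to $M$ that makes every $G$-translate of $g$ invariant under one $G$-equivariant translation structure (your invariant affine structure is the paper's family of torus subgroups $\rho_i=\phi_i\rho_1\phi_i^{-1}$ in Proposition~\ref{prop:isometric_finite_group_free_action_on_tori}), and then linearity of MEC with the framings of Proposition~\ref{prop:charge_and_characteristic_covering}(b).

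One correction in Step~3: the metric from Leeb--Scott is not $G$-invariant, so $\phi^*g$ has fiber length $l_{\phi(X)}$ on $X$, where $l_Y$ is the $g$-fiber length on the piece $Y$, not $l$. The sums therefore read $\sigma_{\bar g}(f_j,f_j)=\sum_{\phi\in G}l_{\phi(X)}^2$ and $\sum_j\sigma_{\bar g}(f_j,b_j)=\ch(X,B)\sum_{\phi\in G}l_{\phi(X)}^2$, which is the paper's $|S|\sum_i l_i^2$ rather than $|G|\,l^2$; this is still independent of $j$ and gives MEC.
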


\begin{proof}[Proof of Theorem \ref{thm:npc_metric_free_action_invariant}]
    Recall from Remark \ref{rmk:generality_of_characteristic_coverings} that every finite cover of graph manifolds can be refined to a regular characteristic finite cover, so we may assume that $p: M \rightarrow M'$ is $s$-characteristic for some $s \in \ZZ_{>0}$. 
    Let $g$ be the flat metric on $\Sigma$ given by Proposition \ref{prop:npc_metric_on_M_equivariant_on_T}. Then $\bar g$ descends to a flat metric $g'$ on $\Sigma'$. We will show that $g'$ satisfies the conditions in Proposition \ref{prop:leeb_condition} for each component of $M' \mid \cT'$. 
    
    Let $X'$ be a connected component of $M' \mid \cT'$ whose preimage $X = p^{-1}(X')$ under the covering $p: M \rightarrow M'$ consists of components $X_1, \ldots, X_n$. The group $G$ acts transitively on the collection $\{X_1, \ldots, X_n\}$, so the degree of the covering $p|_{X_i}: X_i \rightarrow X'$ is independent of $i$; denote it by $d$. Since $p|_{X_i}$ is $s$-characteristic, the degree of the covering map restricted to a regular fiber is $s$. Also, $p|_{X_i}$ is regular, and the deck group $S_i = \Deck(X_i \rightarrow X')$ is the subgroup of $G$ whose elements $\phi \in S_i$ satisfy $\phi(X_i) = X_i$. In particular, $|S_i| = d$.

    Denote by $T'_1, \ldots, T'_{m'}$ the boundary components of $X'$. Their preimages are the tori $T_{i,j,k} \subset X_i$, where $p(T_{i,j,k}) = T'_j$ and $k = 1, \ldots, d/s^2$, as each $p^{-1}(T'_j)$ has $d/s^2$ many components, each covering $T'_j$ with degree $s^2$. Since $p$ is $s$-characteristic, there exist unitary framings $B_i = \bigcup_{j,k} b_{i,j,k}$ of $X_i$ and $B' = \bigcup_j b'_j$ of $X'$ which satisfy Proposition \ref{prop:charge_and_characteristic_covering}. Let $f_{i,j,k}$ and $f'_j$ be regular fibers on $T_{i,j,k}$ and $T'_j$ respectively. Then,
    \begin{equation*}
        p_* [f_{i,j,k}] = s [f'_j]
        \quad
        \text{and}
        \quad
        p_* [b_{i,j,k}] = s [b'_j].
    \end{equation*}

    Let $l$ be the $\bar g$-length of the regular fiber in $M$. 
    We first observe that
    \begin{equation*}
        \begin{aligned}
            s^2 \, \sigma_{g'|_{T'_j}} ([f'_j], [f'_j])
            &= \sigma_{g'|_{T_j}} (p_* [f_{i,j,k}], p_* [f_{i,j,k}])
            = \sigma_{(p^* g')|_{T_{i,j,k}}} ([f_{i,j,k}], [f_{i,j,k}]) \\
            &= \sigma_{\bar g|_{T_{i,j,k}}} ([f_{i,j,k}], [f_{i,j,k}]) = l^2.
        \end{aligned}
    \end{equation*}
    It follows that $\sigma_{g'|_{T'_j}} ([f'_j], [f'_j]) = l^2 / s^2$ and is independent of $j$.

    Next, applying Proposition \ref{prop:leeb_condition} to $(X_i, \bar g|_{\partial X_i})$ gives
    \begin{equation*}
        \begin{aligned}
            \ch(X_i, B_i) \, l^2
            &= \sum_{j=1}^{m'} \sum_{k=1}^{d/s^2} \sigma_{\bar g|_{T_{i,j,k}}} ([f_{i,j,k}], [b_{i,j,k}]) 
            = \sum_{j=1}^{m'} \sum_{k=1}^{d/s^2} \sigma_{g'|_{T'_j}} (p_* [f_{i,j,k}], p_* [b_{i,j,k}]) \\
            &= \sum_{j=1}^{m'} \frac{d}{s^2} \cdot \sigma_{g'|_{T'_j}} (s[f'_j], s[b'_j]) 
            = d \cdot \sum_{j=1}^{m'} \sigma_{g'|_{T'_j}} ([f'_j], [b'_j]).
        \end{aligned}
    \end{equation*}
    Also, Proposition \ref{prop:charge_and_characteristic_covering} gives us $\ch(X_i, B_i) = \frac{d}{s^2} \ch(X', B')$, so 
    \begin{equation*}
        \sum_{j=1}^{m'} \sigma_{g'|_{T'_j}} ([f'_j], [b'_j])
        = \ch(X_i, B_i) \, \frac{l^2}{d} 
        = \ch(X', B') \, \frac{l^2}{s^2}.
    \end{equation*}

    It follows that the framed Seifert fibered piece $(X', B')$ with the flat boundary metric $g'$ satisfy the conditions of Proposition \ref{prop:leeb_condition}. By Corollary \ref{cor:leeb_condition}, $g'$ extends to an NPC metric on $M'$. This completes the proof.
\end{proof}

\begin{figure}[ht]
    \centering
    \footnotesize
    
    \def\svgwidth{\columnwidth}
    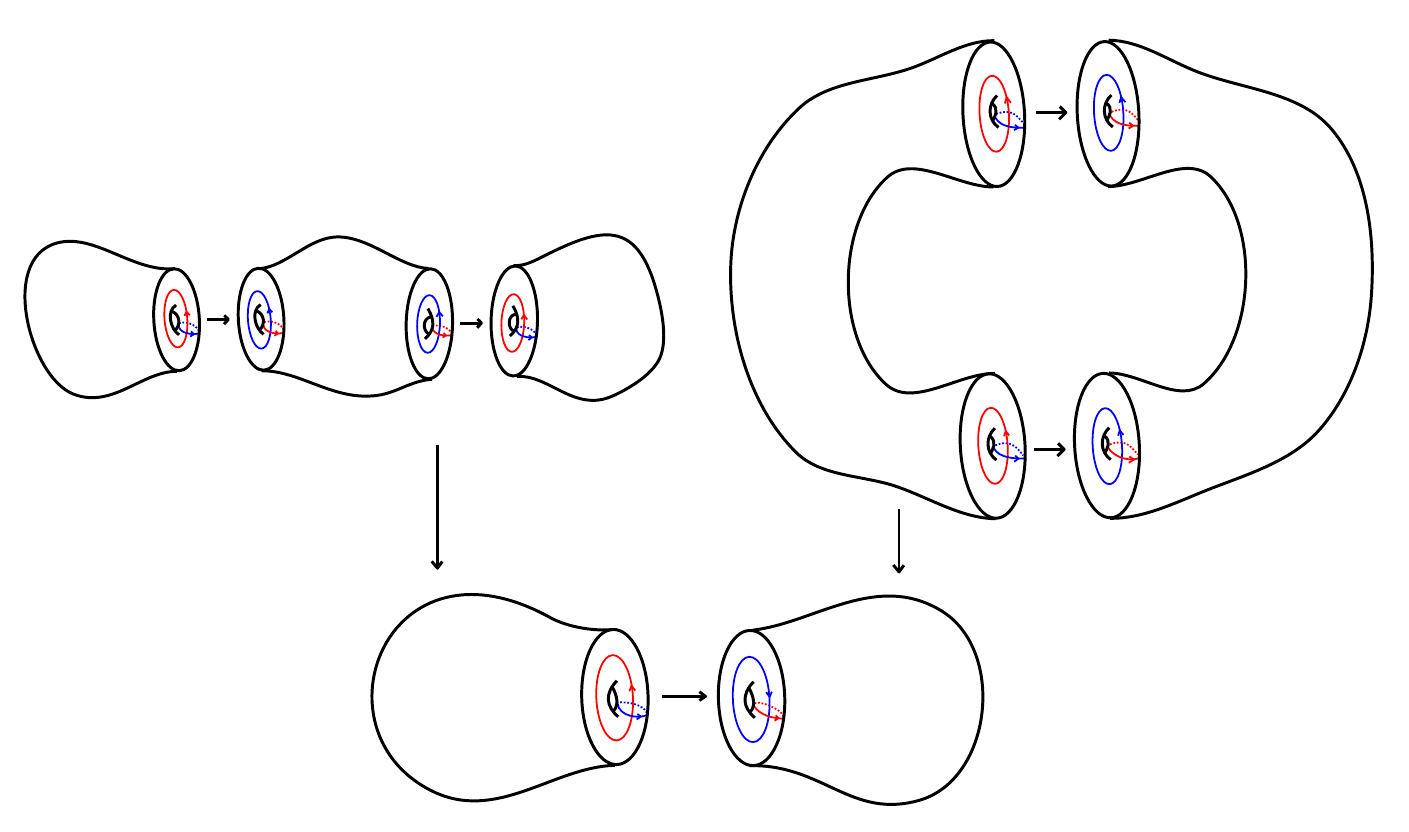

    \caption{Graph manifolds $M_2$ and $M_3$ admitting $\ZZ/2\ZZ$ symmetries whose quotient is $M_1$.}
    \label{fig:equivariant_npc_examples}
\end{figure}

\begin{example}
    Before proving Proposition \ref{prop:npc_metric_on_M_equivariant_on_T}, we give two examples illustrating how Theorem \ref{thm:npc_metric_free_action_invariant} applies; see Figure \ref{fig:equivariant_npc_examples}. The graph manifold $M_2$ in Figure \ref{fig:npc_examples} admits an involution preserving $X_2$ and swapping $X_1$ with $X_3$, provided that $X_1 \cong X_3$, that $X_2$ itself admits an involution swapping its two boundary components, and that the gluing maps $\phi_1$ and $\phi_2$ are compatible with these involutions. Similarly, the graph manifold $M_3$ in Figure \ref{fig:npc_examples} admits an involution if both $X_1$ and $X_2$ admit involutions swapping their two boundary components and if the gluing maps $\phi_1$ and $\phi_2$ are compatible with these involutions. In both cases, the quotient by the involution is $M_1$.
    By Theorem \ref{thm:npc_metric_free_action_invariant}, if $M_2$ or $M_3$ admits an NPC metric, then so does $M_1$. Therefore, the map $\phi'_*: H_1(T'_-) \rightarrow H_1(T'_+)$ must send $([f'_-], [b'_-])$ to $\pm ([b'_+], [f'_+])$. It follows that, for $i=1,2$, the maps $\phi_{i*}: H_1(T_{i-}) \rightarrow H_1(T_{i+})$ must send $([f_{i-}], [b_{i-}])$ to $\pm ([b_{i+}], [f_{i+}])$.
\end{example}

In the remaining part of this section, we will finish the proof of Theorem \ref{thm:npc_metric_free_action_invariant} by proving Proposition \ref{prop:npc_metric_on_M_equivariant_on_T} ($\S$\ref{subsec:proof_of_main_1}). Doing so requires us to state Proposition \ref{prop:isometric_finite_group_free_action_on_tori} regarding averaging flat metrics over a free finite group action on a disjoint collection of tori, which we prove in $\S$\ref{subsec:proof_of_main_2}.

\subsection{Proof of Proposition \ref{prop:npc_metric_on_M_equivariant_on_T}} \label{subsec:proof_of_main_1}

In this section, we prove Proposition \ref{prop:npc_metric_on_M_equivariant_on_T}. 
Our first step is to show that if $M$ admits an NPC metric, then it admits one for which the pullback tori $T_{i,j}$ from Equation \eqref{eqn:pullback_JSJ_tori} are totally geodesic and flat. For this we invoke a result of Leeb--Scott \cite{LeebScott1998CharacteristicAllDimensions}. The canonical decomposition in the theorem statement refers to a JSJ decomposition of $M$.

\medskip

\begin{theorem}[Leeb--Scott] \label{thm:geometric_JSJ}
    Let $M$ be a closed orientable $3$-manifold with an NPC metric. Then either $M$ has a flat metric, or $M$ can be canonically decomposed along finitely many totally geodesically embedded flat tori. The resulting pieces are Seifert fibered or atoroidal. The Seifert fibered components of $M$ admit a Seifert fibration by closed geodesics, and they are rigid in the sense that they split locally as a Riemannian product, with the fiber as the one-dimensional factor.
\end{theorem}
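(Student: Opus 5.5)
The plan is to build the decomposition directly from the topological JSJ decomposition, replacing each JSJ torus by a totally geodesic flat torus in its homotopy class. Fix the NPC metric on $M$. By Corollary~\ref{cor:aspherical_implies_irreducible}, $M$ is irreducible, so Theorem~\ref{thm:JSJ_decomposition} gives a JSJ collection $\cT$, unique up to isotopy. If $\cT=\emptyset$ and $M$ is Seifert fibered with Euclidean base, we land in the flat alternative; otherwise we proceed torus by torus. The ingredients I expect to need from outside the paper are the Flat Torus Theorem, least-area surface theory, and the splitting of minimal sets of central elements in NPC geometry.

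\textbf{Step 1 (one flat torus per JSJ torus).} For $T\in\cT$, the subgroup $\pi_1(T)\cong\ZZ^2<\pi_1(M)$ acts on the universal cover $\tilde M\cong\RR^3$. By the Flat Torus Theorem there is a $\pi_1(T)$-invariant totally geodesic flat plane on which $\pi_1(T)$ acts cocompactly by translations. Projecting it to $M$ gives a totally geodesic flat torus $F_T$ homotopic to $T$, but a priori only immersed. To get embeddedness I would use least-area tori. Minimize area in the homotopy class of $T$. The Gauss equation with ambient curvature $\le 0$ gives intrinsic curvature $K\le 0$ on a minimal torus, while Gauss--Bonnet gives $\int K=0$. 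Hence $K\equiv 0$, and both the ambient sectional curvature along the surface and the second fundamental form vanish. So every least-area representative is flat and totally geodesic. Freedman--Hass--Scott then says a least-area representative of an embedded incompressible torus is embedded, or double covers a one-sided Klein bottle. The same theory shows that least-area representatives of disjoint, nonparallel embedded classes are disjoint. This gives the canonical collection of totally geodesic flat tori.

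\textbf{Step 2 (the pieces and their topology).} Cutting $M$ along these flats gives pieces with totally geodesic, hence locally convex, boundary. Each piece is isotopic to a piece of the JSJ decomposition, so it is Seifert fibered or atoroidal. The flat alternative should arise exactly when the decomposition degenerates: no JSJ tori with a Euclidean base, or a torus bundle or Klein-bottle double-cover situation in which every piece is flat. In that case Bieberbach's theorem, applied to the resulting $\ZZ^3$-virtually abelian $\pi_1$, gives a flat metric.

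\textbf{Step 3 (rigidity of Seifert pieces).} Let $X$ be a Seifert piece with fiber class $f$, which is central in a finite-index subgroup of $\pi_1(X)$. The universal cover $\tilde X$ of the piece is a convex subset of $\tilde M$ with flat totally geodesic boundary planes. I want to show $\tilde X\subset \operatorname{Min}(f)$. The boundary flats contain axes of $f$, since $f$ lies in each boundary $\ZZ^2$. Moreover, $\operatorname{Min}(f)$ is a closed convex $\pi_1(X)$-invariant set splitting as $Y\times\RR$. Using cocompactness of $\pi_1(X)$ on $\tilde X$, the displacement of $f$ is bounded on $\tilde X$ and convex, so it is constant along the convex hull of the boundary flats. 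From this I would conclude that $\tilde X$ lies in $Y\times\RR$, with $\pi_1(X)$ preserving the splitting. The $\RR$-lines then descend to closed geodesic fibers of a common length, and the metric on $X$ is locally a Riemannian product.

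\textbf{Main obstacles.} I expect the main obstacle to be exactly this last convexity/displacement argument: showing that $\tilde X$ fills out a product region rather than only meeting $\operatorname{Min}(f)$ in the boundary flats. The Klein bottle alternative in Step 1 also needs a separate treatment. There I would pass to a double cover, or replace the torus by the boundary of the twisted $I$-bundle collapsed onto the flat Klein bottle, mirroring the $K\tilde\times I$ phenomenon in Lemma~\ref{lem:JSJ_of_quotient_pulls_back_to_JSJ_of_original}.
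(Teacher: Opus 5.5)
The paper does not prove this statement; it quotes Leeb--Scott and uses the result as a black box. So your proposal is an independent reconstruction, not a variant of an argument in the paper. Steps 1 and 2 are sound in outline: the Gauss-equation/Gauss--Bonnet argument does force least-area tori to be flat and totally geodesic, and Freedman--Hass--Scott supplies embeddedness and disjointness.

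\textbf{The gap in Step 3.} The genuine gap is the one you flagged, and the reason you give there is not the operative one. Boundedness plus convexity of $d_f$ gives nothing on $\tilde X$, because geodesics in $\tilde X$ end on $\partial\tilde X$ instead of extending to lines. What is true is the following. Every boundary flat of $\tilde X$ is $f$-invariant, since $\langle f\rangle$ is normal in $\pi_1(X)$, and $f$ acts on it by a translation, so the flat lies in $\operatorname{Min}(f)$. Since $\operatorname{Min}(f)$ is closed and convex, the closed convex hull $C$ of $\partial\tilde X$ lies in $\operatorname{Min}(f)$.

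The missing idea is that $C=\tilde X$. Suppose $x\in\tilde X\setminus C$, let $p$ be its nearest point in $C$, and let $c$ be the geodesic ray from $p$ through $x$. The function $t\mapsto d(c(t),C)$ is convex and equals $t$ on $[0,d(p,x)]$, hence equals $t$ for all $t\ge 0$. So $c$ can never reach $\partial\tilde X\subset C$; it stays in $\tilde X$, and $d(\cdot,C)$ is unbounded on $\tilde X$. This contradicts the $\pi_1(X)$-invariance of $d(\cdot,C)$ together with compactness of $X$.

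Thus $\tilde X\subset\operatorname{Min}(f)$. For each $x\in\tilde X$, the whole $f$-axis through $x$ is the union of the segments $[f^nx,f^{n+1}x]$, so it lies in $\tilde X$. Hence $\tilde X=Y_X\times\RR$ with $\pi_1(X)$ permuting the $\RR$-lines, which gives the geodesic Seifert fibration and the local product structure. If $\cT=\emptyset$ and $M$ is an $\HH^2\times\RR$-manifold, there is no boundary, and the classical Gromoll--Wolf/Lawson--Yau argument on complete lines applies.

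\textbf{The one-sided case.} Your remedy here fails. Passing to a double cover changes $M$. The boundary of a tubular neighbourhood of the flat Klein bottle is equidistant from it but in general not totally geodesic. Write $\operatorname{Min}(\pi_1(T))=\RR^2\times J$ by the Flat Torus Theorem, with $J$ a point or a compact interval. If $J$ is a point, no totally geodesic torus exists in the class of $T$ at all: any such torus is flat by your own Gauss--Bonnet argument, and its lift would lie in $\operatorname{Min}(\pi_1(T))$. In that case the decomposition must use the one-sided flat Klein bottle itself, and no modification of your Step 1 can produce a torus.

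\textbf{Canonicity.} The same description shows that when $J$ is a nondegenerate interval, least-area tori in a class are not unique. So ``canonical'' needs a selection rule, for example a boundary slice of $\RR^2\times J$, and Step 1 does not supply one.

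\textbf{Step 2.} Your description of the flat alternative is muddled but harmless. For an NPC manifold $M$, flatness occurs exactly when $M$ is Seifert fibered with Euclidean base, and then $\cT=\emptyset$.
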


\medskip 

\begin{corollary} \label{cor:geometric_JSJ}
    Let $M$ and $\cT$ be defined as in Equation \eqref{eqn:pullback_JSJ_tori}. Suppose $M$ admits an NPC metric. Then there exists an NPC metric $h$ on $M$ such that every $T_{i,j} \in \cT$ is totally geodesic and flat with respect to $h$.
\end{corollary}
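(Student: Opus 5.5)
Apply Theorem~\ref{thm:geometric_JSJ} to an NPC metric on $M$, then compare its totally geodesic tori with $\cT$ via JSJ uniqueness, and build $h$ piece by piece.

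\textbf{Leeb--Scott metric.} Fix an NPC metric $h_0$ on $M$. If $M$ admits a flat metric, then $M$ is Seifert fibered rather than a genuine graph manifold. In that case every incompressible torus can be made totally geodesic after an isotopy and a flat-metric choice, which I treat separately below. Otherwise Theorem~\ref{thm:geometric_JSJ} gives a JSJ collection $\cT_0$ of totally geodesic flat tori. The Seifert pieces of $(M,h_0)$ split locally as Riemannian products with geodesic fibers.

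\textbf{Comparing $\cT$ with $\cT_0$.} By Lemma~\ref{lem:JSJ_of_quotient_pulls_back_to_JSJ_of_original} and uniqueness in Theorem~\ref{thm:JSJ_decomposition}, every torus of $\cT$ is isotopic either to a torus of $\cT_0$ or to an incompressible torus inside a Seifert piece. Several tori of $\cT$ may be parallel to one JSJ torus, bounding $T^2\times I$ regions; these arise from $K\tilde\times I$ pieces downstairs. Any remaining tori of $\cT$ that are not parallel to a JSJ torus are vertical tori inside Seifert pieces. I would:
\begin{enumerate}
\item isotope each vertical torus to a union of fibers over an essential simple closed geodesic of the NPC base orbifold $Y/\rho'$. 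Such a torus is totally geodesic and flat because the metric is locally a product.
\item for tori parallel to a JSJ torus $T_0$, modify $h_0$ near $T_0$. Using Proposition~\ref{prop:leeb_condition}(b), make the metric flat on collars of the adjacent pieces, then insert a product region $T_0\times[0,L]$ with metric $g_{T_0}+dt^2$. The parallel copies $T_0\times\{t_i\}$ are then totally geodesic and flat.
\end{enumerate}

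\textbf{Transporting back to $\cT$.} Since these modifications keep the curvature nonpositive, the new metric $h_1$ is NPC. Each torus of $\cT$ is isotopic, through pairwise disjoint tori, to a totally geodesic flat torus of $h_1$. By isotopy extension there is a diffeomorphism $\psi$ of $M$ carrying this totally geodesic family onto $\cT$. Set $h=(\psi^{-1})^*h_1$.

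\textbf{Main obstacle.} The hardest step is the simultaneous isotopy: one must ensure the disjoint family $\cT$, including parallel copies and vertical tori, is isotopic as a family to the geodesic representatives without collisions. I expect to handle this with standard uniqueness of incompressible tori up to isotopy in Seifert pieces (vertical tori over disjoint geodesics are disjoint) and in product regions $T^2\times I$. The flat-manifold case is handled the same way, taking geodesic tori in the Euclidean structure.
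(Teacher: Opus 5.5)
Your outline is correct, but it takes a different route from the paper. The paper never moves the ambient metric. It transports only the flat metrics $\tilde h|_{\tilde T_{i,j}}$ to the $T_{i,j}$ along individual isotopies, then rebuilds an NPC metric from scratch: each non-product component of $M \mid \cT$ is isotopic to a component of $M \mid \tilde\cT$ and so satisfies MEC, each $T^2\times I$ component gets the product metric $\tilde h|_{\tilde T}+dt^2$, and Corollary~\ref{cor:leeb_condition} glues everything. You instead keep the Leeb--Scott metric away from collars, insert flat product regions to host the parallel copies, and push the whole metric forward by a diffeomorphism isotopic to the identity. This gives a metric isometric to a mild modification of the original and avoids re-checking MEC piece by piece. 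The cost is that the simultaneous isotopy of the disjoint family $\cT$ onto the geodesic family, with parallel copies on the correct sides, becomes load-bearing. That is a standard Waldhausen-type fact and should be cited rather than sketched; the paper uses a comparable input implicitly when it matches pieces of $M\mid\cT$ with pieces of $M\mid\tilde\cT$.

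Two smaller points. First, Proposition~\ref{prop:leeb_condition}(b) is proved for the metric built in part (a), not for an arbitrary Leeb--Scott metric. So either first replace the metric on the pieces adjacent to $T_0$ by the part (a) metric (legitimate, since the boundary data of $h_0$ satisfy MEC by the ``only if'' direction), which is the paper's move done locally, or redo the convex flattening in geodesic parallel coordinates $dt^2+J^2ds^2$, where the base curvature is $-J_{tt}/J$. Second, you can drop the flat case and Step~1 entirely. If $M$ is flat, then $M'$ is Seifert fibered and $\cT'=\cT=\emptyset$. Otherwise the fibrations of the lifted pieces are lifted from $M'$, so fiber slopes on the two sides of a lifted torus stay distinct, and $\cT$ is a JSJ collection plus parallel pairs cobounding lifts of $K\tilde\times I$ pieces. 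Dropping Step~1 is worthwhile, because as stated it can fail: over a hyperbolic base, the geodesic in the class of a curve enclosing two cone points of order $2$ is the arc between them traversed twice, so no embedded vertical torus lies over it.
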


\begin{proof}
    Start with an NPC metric $\tilde h$ on $M$. By Theorem \ref{thm:geometric_JSJ}, there exists a JSJ collection of incompressible tori $\tilde\cT$ such that every $\tilde T \in \tilde\cT$ is totally geodesic and flat with respect to $\tilde h$. The components $\tilde X \subset M \mid \tilde\cT$ all satisfy MEC with respect to the metrics $\tilde h|_{\tilde T}$. 

    For each $T_{i,j} \in \cT$, there exists a unique $\tilde T_{i,j} \in \tilde T$ in the same isotopy class of $T_{i,j}$. Notice that if $\cT$ is not minimal, then it might be possible that $\tilde T_{i,j} = \tilde T_{i',j'}$ even when $(i,j) \neq (i',j')$. (In this case, $T_{i,j}$ and $T_{i',j'}$ together bound a $T^2 \times I$.) Fix a smooth isotopy 
    \begin{equation*}
        H_{i,j}: T^2 \times I \rightarrow M
    \end{equation*}
    satisfying $H_{i,j}(T^2 \times \{0\}) = T_{i,j}$ and $H_{i,j}(T^2 \times \{1\}) = \tilde T_{i,j}$. Let $f_{i,j}: T_{i,j} \rightarrow \tilde T_{i,j}$ be the diffeomorphism given by
    \begin{equation*}
        f_{i,j}(H_{i,j}(x, 0)) = H_{i,j}(x, 1) \quad \text{for all $x \in T^2$.}
    \end{equation*}
    Equip $T_{i,j}$ with the flat metric $h_{i,j} = f_{i,j}^* \tilde h|_{\tilde T_{i,j}}$. 

    We claim that the flat metrics $h_{i,j}$ on $T_{i,j}$ can be extended to every component of $M \mid \cT$ as in Proposition \ref{prop:leeb_condition}. Observe that $M \mid \cT$ differs from $M \mid \tilde \cT$ only by adding components diffeomorphic to $T^2 \times I$. 
    If $X \subset M \mid \cT$ is a component not diffeomorphic to $T^2 \times I$, then there is a component $\tilde X \subset M \mid \tilde \cT$ isotopic to $X$. In particular, the tori $T_{i,j} \subset \partial X$ correspond bijectively to the tori $\tilde T_{i,j} \subset \partial\tilde X$. Since $\tilde X$ satisfies MEC with respect to the metrics $\tilde h|_{\tilde T_{i,j}}$, $X$ must also satisfy MEC with respect to $h_{i,j}$.

    On the other hand, suppose $X \subset M \mid \cT$ is diffeomorphic to $T^2 \times I$. Let $T_{i,j}, T_{i',j'} \in \cT$ be the two boundary components of $X$, and let $\tilde T_{i,j} \in \tilde \cT$ be the unique JSJ torus in the isotopy class of both $T_{i,j}$ and $T_{i',j'}$.
    we may construct a diffeomorphism 
    \begin{equation*}
        F: X \rightarrow \tilde T_{i,j}^2 \times I.
    \end{equation*}
    such that 
    \begin{equation*}
        F^{-1}(x, t) = 
        \begin{cases}
            H_{i,j}(x,2t) & t \in [0, \frac{1}{2}] \\
            H_{i',j'}(x,1-2t) & t \in [\frac{1}{2}, 1]
        \end{cases}
    \end{equation*}
    Consider the product flat metric 
    \begin{equation*}
        h_X := \tilde h|_{\tilde T_{i,j}} + dt^2 \quad \text{over $\tilde T_{i,j} \times I$.}
    \end{equation*}
    Then, the metric $F^* h_X$ is flat on $X$ and agrees with $h_{i,j}$ and $h_{i',j'}$ on $\partial X = T_{i,j} \sqcup T_{i',j'}$. We conclude that the flat metrics $h_{i,j}$ can be extended to an NPC metric $h$ on $M$.
\end{proof}

Next, we state a proposition showing that we can in fact find an NPC metric on $M$ with the following additional property: not only are the pullback tori $T_{i,j}$ totally geodesic and flat, but the metric restricted to $\Sigma$ is also $G$-invariant. The idea is to use the ``averaging trick'' to first obtain a $G$-invariant metric $\bar g$ on $\Sigma$, and then show that $\bar g$ extends to an NPC metric on $M$. Since the proof of this proposition is long, we delay its proof to $\S$\ref{subsec:proof_of_main_2}.

\medskip

\begin{proposition} \label{prop:isometric_finite_group_free_action_on_tori}
    Let $\Sigma$ be a closed orientable flat surface, that is, a disjoint union of tori. Let $G$ be a finite group acting freely on $\Sigma$ by orientation-preserving diffeomorphisms, such that the quotient space $\Sigma / G$ is a torus. Then, for every flat metric $g$ on $\Sigma$, there exists $f \in \Diff_0(\Sigma)$ such that the metric
    \begin{equation} \label{eqn:sum_metric}
        \bar g = \sum_{\phi \in G} \phi^* (f^* g)
    \end{equation}
    is flat. In particular, $G$ acts isometrically on $\Sigma$ with respect to $\bar g$, and
    \begin{equation} \label{eqn:inner_product_and_sum_metric}
        \sigma_{\bar g|_{T_i}} = \sum_{\phi \in G} \sigma_{\phi^* f^* g|_{T_i}}
    \end{equation}
    for each torus component $T_i$ of $\Sigma$.
\end{proposition}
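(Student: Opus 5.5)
Because $G$ acts freely on $\Sigma$ with quotient a torus $T' = \Sigma/G$, the projection $q:\Sigma \to T'$ is a regular covering with deck group $G$. The plan is to transfer the problem to the quotient. Every torus component $T_i$ covers $T'$, and $G$ permutes the components transitively; the stabilizer $S_i$ of $T_i$ acts freely on $T_i$ as the deck group of $T_i \to T'$. The basic observation is this: if each $\phi \in G$ acts by an \emph{affine} map with respect to a single fixed flat structure on $\Sigma$, then every $\phi^* g$ is again flat for any flat $g$ in that affine class. A sum of flat metrics that are constant in common affine coordinates is again constant in those coordinates, hence flat. So the goal is to produce $f\in\Diff_0(\Sigma)$ such that, in the flat coordinates of $f^*g$, every element of $G$ is affine.

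\textbf{Step 1: choose an affine model.} Put a flat metric $g'$ on $T'$ and let $h = q^* g'$. Then $G$ acts on $(\Sigma,h)$ by isometries, in particular by affine maps in $h$-flat coordinates. On $T_i$, the group $S_i$ acts by deck translations. An element $\phi$ carrying $T_i$ to $T_k$ is a lift of the identity of $T'$, so it is an isometry between $h$-flat tori.

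\textbf{Step 2: straighten $g$ into the affine class of $h$.} Flat metrics on a torus $T_i$ are exactly the translation-invariant metrics on $\RR^2/\Lambda$ for some affine structure. Any two flat structures on $T_i$ differ by a diffeomorphism, which we may compose with an element of the mapping class group so that it lies in $\Diff_0(T_i)$. Hence there is $f_i\in\Diff_0(T_i)$ such that $f_i^*g|_{T_i}$ is constant, that is, parallel, in the $h$-affine coordinates of $T_i$. Concretely, take the $h$-affine identification $T_i\cong \RR^2/\Lambda_i$ and the $g$-flat identification $T_i\cong\RR^2/\Lambda_i^g$. Both induce the same map on $H_1$ up to a linear map $A_i$, and the linear map $A_i$ realizes a diffeomorphism isotopic to the identity. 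Set $f=\sqcup f_i\in\Diff_0(\Sigma)$.

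We then verify that each $\phi^*(f^*g)$ is constant in $h$-affine coordinates, since $\phi$ is $h$-affine. The sum $\bar g$ is therefore a constant positive-definite form on each $T_i$, hence flat and $G$-invariant.

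\textbf{Step 3: identity for $\sigma$.} For constant metrics on $\RR^2/\Lambda$, the length squared of a primitive class $[a]$ is the value of the quadratic form on the translation vector of $a$. The quadratic form of a sum of metrics is the sum of the quadratic forms, and this gives \eqref{eqn:inner_product_and_sum_metric}.

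The main obstacle is Step 2: arranging that the straightening diffeomorphism is isotopic to the identity while keeping the affine identification compatible with the $G$-action. In particular, one must check that the $h$-flat structure and the $g$-flat structure induce the same marking on $H_1(T_i)$, which is what lets $A_i$ be chosen in $\Diff_0$. I would handle this by using the $GL_2(\RR)$-linear map on universal covers, which is homotopic through linear maps to a lattice isomorphism respecting homology. Orientation-preservation of $G$ ensures that no orientation-reversing issues arise.
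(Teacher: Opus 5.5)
Your argument is correct. It shares the paper's overall strategy: straighten $g$ on each component by an element of $\Diff_0$ so that it becomes invariant under a $G$-compatible translation structure, then average. Where it differs is in how that structure is produced. The paper builds it by hand. It fixes a base component $T_1$ and uses Lemma~\ref{lem:properties_of_diff_of_torus}(b) to put the stabilizer $S$ inside a torus subgroup $\rho_1$ of $\Diff_0(T_1)$. It transports this to $\rho_i=\phi_i\circ\rho_1\circ\phi_i^{-1}$ via coset representatives, and invokes the conjugacy statement of Lemma~\ref{lem:properties_of_diff_of_torus}(a) to obtain each $f_i$. It then needs a coset computation to see that every $(\phi^*f^*g)|_{T_i}$ is $\rho_i$-invariant, before concluding with Lemma~\ref{lem:properties_of_diff_of_torus}(c).

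You instead obtain a $G$-compatible affine structure on all components at once, by pulling back a flat metric from $\Sigma/G$. Its translation groups form exactly such a family $\rho_i$, but now equivariance is automatic. The base-point and coset-representative choices, which the paper itself flags as non-natural, disappear. Lemma~\ref{lem:properties_of_diff_of_torus} is replaced by two classical facts: isometries between flat tori are affine, and a diffeomorphism of $T^2$ acting trivially on $H_1$ lies in $\Diff_0(T^2)$. Your route also uses only that $G$ preserves \emph{some} flat metric on $\Sigma$. So neither orientation-preservation nor connectedness of $\Sigma/G$ is really needed; a Klein bottle quotient works equally well. Finally, your Step~3 states the actual reason for \eqref{eqn:inner_product_and_sum_metric}: all summands are constant in common coordinates. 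The paper leaves this implicit behind its remark that pulling back by $\Diff_0$ does not change $\sigma$.

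The ``main obstacle'' in your last paragraph is not really one. Since $h$ is $G$-invariant, the $f_i$ can be chosen independently on each component with no compatibility check. The two flat charts $\Psi_h\colon \RR^2/\Lambda_i\to T_i$ and $\Psi_g\colon \RR^2/\Lambda_i^g\to T_i$ also need not induce the same marking. Take the linear map $L_i$ with $L_i(\Lambda_i)=\Lambda_i^g$ that induces $(\Psi_g^{-1}\circ\Psi_h)_*$ on $H_1$. Then $f_i=\Psi_g\circ\bar L_i\circ\Psi_h^{-1}$ acts trivially on $H_1(T_i)$, hence lies in $\Diff_0(T_i)$. Moreover $f_i^*g$ is the constant form $L_i^{T}L_i$ in the $h$-chart. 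No homotopy through linear maps is needed, and no orientation hypothesis either.
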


\begin{proof}[Proof of Proposition \ref{prop:npc_metric_on_M_equivariant_on_T}]
    Again, by Remark \ref{rmk:generality_of_characteristic_coverings}, we may without loss of generality assume that $M \rightarrow M'$ is $s$-characteristic. 
    Let $h$ be the NPC metric on $M$ obtained from Corollary \ref{cor:geometric_JSJ}. Since $\Sigma$ is preserved under the $G$-action, we may apply Proposition \ref{prop:isometric_finite_group_free_action_on_tori} to $(T, h|_T)$ to obtain a diffeomorphism $f \in \Diff_0(T)$. Let $H_f: T \times [0,1] \rightarrow T$ be an isotopy from $f$ to $\Id_T$. Let $N_T \subset M$ be a regular neighborhood of $\Sigma$, where $N_T \cong T \times [-1,1]$ and $T \subset N_T$ is identified with $T \times \{0\}$. Let $\tau: [0,1] \rightarrow [0,1]$ be a smooth nondecreasing function that is equal to $0$ in a neighborhood of $0$ and equal to $1$ in a neighborhood of $1$. Define a map $F: M \rightarrow M$ that is supported in $N_T$ and acts on $N_T \cong T \times [-1,1]$ by
    \begin{equation*}
        F(x, t) = (H_f(x, \tau(t^2)), t).
    \end{equation*}
    Observe that $F|_{T \times \{\pm 1\}} = \Id_{T \times \{\pm 1\}}$ and $F|_{T \times \{0\}} = f$. Also, $F$ is a diffeomorphism since it restricts to a diffeomorphism on each slice $T \times \{t\}$. Let $g = F^* h$. Then $g|_T = f^*(h|_T)$, so Equations \eqref{eqn:sum_of_flat_metrics_is_flat} and \eqref{eqn:sum_of_flat_metrics_is_flat 2} hold by Proposition \ref{prop:isometric_finite_group_free_action_on_tori}.

    We show that the $G$-invariant metric $\bar g$ from Equation \eqref{eqn:sum_of_flat_metrics_is_flat} extends to an NPC metric on $M$.
    Let $X_1$ be a Seifert piece in $M \mid \cT$. The orbit of $X_1$ consists of Seifert pieces $X_1, X_2, \ldots, X_k$. Let $T_{i,1}, \ldots, T_{i,m}$ be the boundary tori of $X_i$ (it is possible here that $T_{i,j}$ and $T_{i',j'}$ are isotopic in $M$, but that does not affect the proof). Denote by $X = X_1 \sqcup \cdots \sqcup X_k$, and let $X' = X / G$. Since $X$ admits an NPC metric, and the covering $X \rightarrow X'$ is regular and $s$-characteristic, we may invoke Proposition \ref{prop:charge_and_characteristic_covering} to obtain unitary framings $B = \bigcup_{i,j} b_{i,j}$ and $B' = \bigcup_i b'_i$ on $X$ and $X'$ respectively. In particular, $\phi_* [B] = [B]$ for all $\phi \in G$. Also, let $f_{i,j}$ and $f'_i$ be oriented regular fibers on $T_{i,j}$ and $T'_i$ respectively. 

    Denote by $B_i = B \cap X_i = \bigcup_j b_{i,j}$ the unitary framing restricted to each component $X_i$. 
    By Corollary \ref{cor:leeb_condition} and the fact that $G$ acts transitively on the collection of Seifert pieces $\{X_1, \ldots, X_k\}$, it suffices to show that $(X_1, B_1)$ satisfies MEC with respect to $\bar g|_{\partial X_1}$. Equation \eqref{eqn:sum_of_flat_metrics_is_flat 2} gives us 
    \begin{equation} \label{eqn:npc_metric_on_M_equivariant_on_T}
        \sigma_{\bar g|_{T_{1,j}}} = \sum_{\phi \in G} \sigma_{(\phi^* g)|_{T_{1,j}}}
        \quad
        \text{for all $j = 1, \ldots, m$}.
    \end{equation}
    We first show that $\sigma_{\bar g|_{T_{1,j}}}(f_{1,j}, f_{1,j})$ is independent of $j$. Denote by $l_i$ the $g$-length of a regular fiber on $X_i$. Let $S = \{\phi \in G \mid \phi(X_1) = X_1\}$ be the stabilizer subgroup of $X_1$ with respect to the permutation action on $\{X_1, \ldots, X_k\}$. Choose $\phi_1, \phi_2, \ldots, \phi_k \in G$ such that $\phi_i(X_1) = X_i$ and $\phi_i(T_{1,j}) = T_{i,j}$ for all $j = 1, \ldots, m$. Then every element $\phi \in G$ can be uniquely written as $\phi_i \psi$ for some $\psi \in S$. 
    Equation \eqref{eqn:npc_metric_on_M_equivariant_on_T} gives us 
    \begin{equation*}
        \begin{aligned}
            \sigma_{\bar g|_{T_{1,j}}}([f_{1,j}], [f_{1,j}])
            &= \sum_{\phi \in G} \sigma_{(\phi^* g)|_{T_{1,j}}} ([f_{1,j}], [f_{1,j}]) 
            = \sum_{\phi \in G} \sigma_{g|_{\phi(T_{1,j})}} (\phi_* [f_{1,j}], \phi_* [f_{1,j}]) \\
            &= \sum_{i=1}^k \sum_{\psi \in S} \sigma_{g|_{T_{i,j}}} ((\phi_i \psi)_* [f_{1,j}], (\phi_i \psi)_* [f_{1,j}]).
        \end{aligned}
    \end{equation*}
    Since $\psi \in S$ fixes $X_1$, it permutes the boundary tori $T_{1,1}, \ldots, T_{1,m}$, so our choice of unitary framing from Proposition \ref{prop:charge_and_characteristic_covering} implies that $\psi$ permutes the pairs $([f_{1,j}], [b_{1,j}])$. Let $\psi(j)$ be the index such that $\psi(T_{1,j}) = T_{1,\psi(j)}$. It follows that
    \begin{equation*}
        \sigma_{\bar g|_{T_{1,j}}}([f_{1,j}], [f_{1,j}])
        = \sum_{i=1}^k \sum_{\psi \in S} \sigma_{g|_{T_{i,j}}} ([f_{i,\psi(j)}], [f_{i,\psi(j)}]) 
        = |S| \cdot \sum_{i=1}^k l_i^2.
    \end{equation*}
    Therefore, setting $l^2 = |S| \cdot \sum_{i=1}^k l_i^2$, we see that $\sigma_{\bar g|_{T_{1,j}}}([f_{1,j}], [f_{1,j}]) = l^2$ is independent of $j$. 

    Next, we show that Equation \eqref{eqn:leeb_condition} holds for $(X_1, B_1)$. Equation \eqref{eqn:npc_metric_on_M_equivariant_on_T} gives us
    \begin{equation*}
        \begin{aligned}
            \sum_{j=1}^m \sigma_{\bar g|_{T_{1,j}}}([f_{1,j}], [b_{1,j}])
            &= \sum_{\phi \in G} \sum_{j=1}^m \sigma_{(\phi^* g)|_{T_{1,j}}} ([f_{1,j}], [b_{1,j}]) \\
            &= \sum_{i=1}^k \sum_{\psi \in S} \sum_{j=1}^m \sigma_{(\phi_{i}\psi)^* g|_{T_{1,j}}} ([f_{1,j}], [b_{1,j}]) \\
            &= \sum_{i=1}^k \sum_{\psi \in S} \sum_{j=1}^m \sigma_{g|_{T_{i,j}}} ((\phi_{i}\psi)_* [f_{1,j}], (\phi_{i}\psi)_* [b_{1,j}]) .
        \end{aligned}
    \end{equation*}
    Again, since $\psi$ permutes the pairs $([f_{1,j}], [b_{1,j}])$, we get 
    \begin{equation*}
        \begin{aligned}
            \sum_{i=1}^k \sum_{\psi \in S} \sum_{j=1}^m \sigma_{g|_{T_{i,j}}} ((\phi_{i}\psi)_* [f_{1,j}], (\phi_{i}\psi)_* [b_{1,j}])
            &= \sum_{i=1}^k \sum_{\psi \in S} \sum_{j=1}^m \sigma_{g|_{T_{i,j}}} ((\phi_{i})_* [f_{1,\psi(j)}], (\phi_{i})_* [b_{1,\psi(j)}]) \\
            &= |S| \sum_{i=1}^k \sum_{j=1}^m \sigma_{g|_{T_{i,j}}} ((\phi_{i})_* [f_{1,j}], (\phi_{i})_* [b_{1,j}]).
        \end{aligned}
    \end{equation*}
    Finally, since $\phi_i(T_{1,j}) = T_{i,j}$, we have
    \begin{equation*}
        |S| \sum_{i=1}^k \sum_{j=1}^m \sigma_{g|_{T_{i,j}}} ((\phi_{i})_* [f_{1,j}], (\phi_{i})_* [b_{1,j}])
        = |S| \sum_{i=1}^k \sum_{j=1}^m \sigma_{g|_{T_{i,j}}} ([f_{i,j}], [b_{i,j}]) .
    \end{equation*}
    Now, for each $i$, the flat metrics $\{g|_{T_{i,j}}\}_{j=1}^m$ extend to an NPC metric on $X_i$, Proposition \ref{prop:leeb_condition} gives
    \begin{equation*}
        |S| \sum_{i=1}^k \sum_{j=1}^m \sigma_{g|_{T_{i,j}}} ([f_{i,j}], [b_{i,j}])
        = |S| \sum_{i=1}^k (\ch(X_i, B_i) \, l_i^2). 
    \end{equation*}
    Since $B$ is $G$-invariant, for all $i, i'$, the pair $(X_i, B_i)$ and $(X_{i'}, B_{i'})$ are isomorphic as framed Seifert fibered spaces, so $\ch(X_i, B_i)$ is independent of $i$. It follows that 
    \begin{equation*}
        \sum_{j=1}^m \sigma_{\bar g|_{T_{1,j}}}([f_{1,j}], [b_{1,j}])
        = \ch(X_1, B_1) \cdot \left( |S| \cdot \sum_{i=1}^k  l_i^2 \right)
        = \ch(X_1, B_1) \, l^2.
    \end{equation*}
    Therefore, Equation \eqref{eqn:leeb_condition} holds. 
    This completes the proof that $\bar g|_{\partial X_1}$ satisfies the conditions in Proposition \ref{prop:leeb_condition}.
\end{proof}

\subsection{Proof of Proposition \ref{prop:isometric_finite_group_free_action_on_tori}} \label{subsec:proof_of_main_2}

The goal of this section is to prove Proposition \ref{prop:isometric_finite_group_free_action_on_tori}, which requires us to work with properties of translation subgroups of the diffeomorphism group of a $2$-dimensional torus. 
Denote by $T^2$ the $2$-dimensional torus. Let $\TT^2 \cong \SO(2)^2$ be the rank-$2$ compact abelian Lie group. There is a homomorphism
\begin{equation*}
    \alpha: \GL(2, \ZZ) \hookrightarrow \Aut(\TT^2)
\end{equation*}
defined by
\begin{equation*}
    \alpha\!\left(
        \begin{pmatrix}
        a & b\\
        c & d
        \end{pmatrix}
    \right)
    :
    (z, w)
    \mapsto 
    (z^a w^b, z^c w^d)
\end{equation*}
Here, we treat $\SO(2)$ as the unit complex numbers.

A \emph{torus subgroup} of $\Diff_0(T^2)$ is an equivalence class of faithful representations
\begin{equation*}
    \rho: \TT^2 \hookrightarrow \Diff_0(T^2),
\end{equation*}
where two representations $\rho_1$ and $\rho_2$ are declared equivalent if there exists $A \in \GL(2, \ZZ)$ such that 
\begin{equation*}
    \rho_2 = \rho_1 \circ \alpha(A).
\end{equation*}
Thus, we quotient by reparametrizations of $\TT^2$ so that a torus subgroup records only the image $\rho(\TT^2) \subset \Diff_0(T^2)$ and not the particular marking of $\TT^2$.

We start by showing that faithful actions of $\TT^2$ on $T^2$ are free and transitive. To do so, we need the following lemma, which follows from naturality of the Riemannian metric exponential map with isometries. For a proof, see \cite[Proposition 5.22]{lee2019riemannian}.

\medskip

\begin{lemma} \label{lem:isometry_fixed_point_trivial_differential}
    Let $(M, g)$ be a connected Riemannian manifold, and let $\phi: M \rightarrow M$ be an isometry such that there exists $x \in M$ with $\phi(x) = x$ and $d\phi_x = \Id_{T_xM}$. Then, $\phi = \Id_M$. 
\end{lemma}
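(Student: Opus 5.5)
The statement is Lemma~\ref{lem:isometry_fixed_point_trivial_differential}: an isometry with a fixed point and trivial differential there is the identity. I will prove it by showing that the set where $\phi$ agrees with the identity to first order is open, closed, and nonempty.

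Define
\begin{equation*}
    A = \{ y \in M \mid \phi(y) = y \text{ and } d\phi_y = \Id_{T_yM} \}.
\end{equation*}
By hypothesis $x \in A$, so $A$ is nonempty. Since $M$ is connected, it then suffices to show that $A$ is closed and open.

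\textbf{Closedness.} The condition $\phi(y) = y$ is closed because $\phi$ is continuous. Given that, $d\phi_y$ is an endomorphism of $T_yM$ that depends continuously on $y$, as a section of the bundle $TM$ over the closed fixed-point set. Comparing with $\Id$ in a local trivialization therefore gives a closed condition, so $A$ is closed.

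\textbf{Openness.} This step uses naturality of the exponential map. Because $\phi$ is an isometry, it maps geodesics to geodesics, so
\begin{equation*}
    \phi(\exp_y v) = \exp_{\phi(y)}(d\phi_y v)
\end{equation*}
wherever the left side is defined. Fix $y \in A$ and choose $\varepsilon > 0$ so that $\exp_y$ is a diffeomorphism from the $\varepsilon$-ball in $T_yM$ onto a geodesic ball $U$ around $y$. For $v$ in this ball, naturality gives
\begin{equation*}
    \phi(\exp_y v) = \exp_y v,
\end{equation*}
so $\phi = \Id$ on $U$. Consequently $d\phi_z = \Id$ for every $z \in U$, which gives $U \subset A$ and shows that $A$ is open.

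Since $A$ is nonempty, open, and closed in the connected manifold $M$, we get $A = M$, hence $\phi = \Id_M$.

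The only real input is the naturality identity $\phi\circ\exp_y = \exp_{\phi(y)}\circ d\phi_y$. It holds because, for each $v$, both $t \mapsto \phi(\gamma_v(t))$ and $t \mapsto \gamma_{d\phi_y v}(t)$ are geodesics with the same initial point and the same initial velocity, so they agree by uniqueness of geodesics. Note that completeness of $M$ is never needed, since the argument only uses small normal balls.
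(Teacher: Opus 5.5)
Your proof is correct and is essentially the argument the paper defers to (the paper cites naturality of the exponential map and Lee's Proposition~5.22). The set where $\phi$ fixes the point with identity differential is closed by continuity, is open because $\phi\circ\exp_y=\exp_{\phi(y)}\circ d\phi_y$ forces $\phi=\Id$ on a normal ball, and is therefore all of $M$ by connectedness. One small wording slip: on the fixed-point set, $y\mapsto d\phi_y$ is a continuous section of $\mathrm{End}(TM)$, not of $TM$, but this does not affect the closedness step.
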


\medskip

\begin{corollary} \label{cor:isometry_fixed_point_trivial_differential}
    Let $(M, g)$ be a connected Riemannian manifold, and let $G$ be a Lie group acting isometrically on $M$. If there exists a point $x \in M$ such that for all $\phi \in G$, $\phi(x) = x$ and $d\phi_x = \Id_{T_xM}$, then, $G$ acts trivially on $M$. 
\end{corollary}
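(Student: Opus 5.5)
The plan is to reduce Corollary~\ref{cor:isometry_fixed_point_trivial_differential} to Lemma~\ref{lem:isometry_fixed_point_trivial_differential}, applied one group element at a time. Let $\phi \in G$ be arbitrary. Since $G$ acts isometrically, the diffeomorphism of $M$ induced by $\phi$ (still denoted $\phi$) is an isometry of $(M,g)$. By hypothesis it fixes $x$, and its differential at $x$ is $d\phi_x = \Id_{T_xM}$. Because $M$ is connected, Lemma~\ref{lem:isometry_fixed_point_trivial_differential} applies and gives $\phi = \Id_M$.

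Since $\phi$ was arbitrary, every element of $G$ acts as the identity on $M$, so the action is trivial. No Lie-theoretic input is needed: the Lie group structure on $G$ plays no role beyond $G$ being a group acting by isometries.

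The only point needing care, and it is minor, is the \emph{connectedness} hypothesis. Lemma~\ref{lem:isometry_fixed_point_trivial_differential} depends on it, because the exponential-map argument only propagates the identity from $x$ across the component of $M$ containing $x$. The corollary assumes $M$ is connected, so this hypothesis is available, and there is no genuine obstacle.
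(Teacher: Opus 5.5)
Your proposal is correct and is essentially the paper's proof: the paper also applies Lemma~\ref{lem:isometry_fixed_point_trivial_differential} to each $\phi \in G$ separately to get $\phi = \Id_M$. Your remarks that the Lie structure on $G$ plays no role and that connectedness of $M$ is the hypothesis the lemma needs are both accurate.
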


\begin{proof}
    Apply Lemma \ref{lem:isometry_fixed_point_trivial_differential} to every $\phi \in G$ to get $\phi = \Id_M$. 
\end{proof}

\medskip

\begin{lemma} \label{lem:faithful_torus_action_is_free_transitive}
    Every faithful smooth action of $\TT^2$ on $T^2$ is free and transitive. 
\end{lemma}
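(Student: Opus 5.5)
The plan is to average a metric over $\TT^2$, reduce to properties of a single stabilizer via Corollary~\ref{cor:isometry_fixed_point_trivial_differential}, and finish with a dimension count. Let $\rho: \TT^2 \hookrightarrow \Diff_0(T^2)$ be a faithful smooth action. Since $\TT^2$ is compact, I start from any Riemannian metric on $T^2$ and average it over $\TT^2$ with respect to Haar measure. This gives a $\rho(\TT^2)$-invariant metric $h$, so $\TT^2$ acts by isometries of $(T^2,h)$.

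\textbf{Freeness.} Fix $x \in T^2$ and let $K = \operatorname{Stab}(x)$, a closed subgroup of $\TT^2$. The isotropy representation $K \to O(T_xT^2) \cong O(2)$, $\phi \mapsto d\phi_x$, is a continuous homomorphism. The key observation is that, because $\TT^2$ is abelian, the stabilizer is the same at every point of the orbit: for $y = \gamma x$ we have $\operatorname{Stab}(y) = \gamma K \gamma^{-1} = K$. I will split into cases according to the orbit dimension.

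First, suppose the orbit $\TT^2 \cdot x$ has dimension $2$. Then it is open and, being compact, also closed, so it is all of $T^2$. Hence $K$ fixes every point, and faithfulness forces $K$ trivial.

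Second, suppose the orbit has dimension $\le 1$; I want to derive a contradiction. Pick $x$ on an orbit of maximal dimension; the set of such points is open. Using a small neighborhood, the action near $x$ has all stabilizers equal up to conjugacy, hence equal since the group is abelian. So $K$ fixes an open set, and Corollary~\ref{cor:isometry_fixed_point_trivial_differential}, applied at a point with trivial differential, gives that $K$ acts trivially, hence $K = 1$. Then $\dim \TT^2 \cdot x = 2$, a contradiction.

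The delicate point is justifying that $K$ has trivial differential at some point of an open set. I handle it as follows. The connected component $K^0$, if nontrivial, is a circle acting on $T_xT^2$ by rotations, so it cannot fix a whole orbit neighborhood. The principal orbit type theorem, whose slice argument is quick here, says that points of maximal orbit dimension with minimal stabilizer form an open dense set on which stabilizers are constant. On that set, $K$ fixes an open subset $U$, and then $d\phi_y = \Id$ for $y \in U$ because $\phi$ is the identity on $U$. Lemma~\ref{lem:isometry_fixed_point_trivial_differential} then yields $\phi = \Id$, so the principal stabilizer is trivial by faithfulness. Consequently principal orbits are $2$-dimensional, and by the first case the action is transitive on all of $T^2$.

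\textbf{Conclusion.} Transitivity together with the abelian stabilizer argument from the first case shows that every stabilizer equals one common $K$. This $K$ acts trivially on $T^2$, so $K = 1$ by faithfulness, and the action is free and transitive. I expect the main obstacle to be ruling out non-principal behavior cleanly, that is, establishing that some open set has constant stabilizer. I will first try the dimension argument: the orbit map $\TT^2 \to T^2$ has constant rank along orbits by equivariance, and the maximal-rank locus is open. I will fall back on the slice theorem if that argument is not enough.
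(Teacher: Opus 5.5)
Your proof is correct, but it is organized differently from the paper's.

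The paper argues pointwise, splitting on $\dim S_x$:
\begin{itemize}
\item If $\dim S_x=2$, then $S_x=\TT^2$ fixes $x$. Its image under the isotropy representation in $\GL(T_xT^2)$ is compact of dimension at most $1$, so the kernel is nontrivial.
\item If $\dim S_x=1$, then $S_x$ fixes the circle $\Orb(x)$ pointwise. So each $d\phi_x$ fixes the orbit's tangent vector, and the isotropy image is finite.
\item In both of these cases, a nontrivial subgroup fixes $x$ with identity differential. It therefore acts trivially by Lemma~\ref{lem:isometry_fixed_point_trivial_differential}, contradicting faithfulness.
\item If $\dim S_x=0$, the orbit map is a local diffeomorphism. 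So $\Orb(x)$ is open and closed, hence all of $T^2$, and $S_x=1$.
\end{itemize}

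You instead use the principal orbit type theorem to find an open dense set with a common stabilizer $H$, show $H=1$, and run the open--closed orbit argument once. Your route is more structural: it is the general fact that faithful compact abelian actions on connected manifolds have trivial principal isotropy, followed by a dimension count. Once $H$ fixes a dense set, continuity alone gives $H=1$, so the isometry lemma is not even needed. The cost is the slice theorem, whereas the paper needs only the isotropy representation and that lemma.

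Two side remarks in your second case are inaccurate, though neither is load-bearing.
\begin{itemize}
\item Maximal orbit dimension alone does not make stabilizers locally constant near $x$. Their finite parts can jump at exceptional orbits, so that sentence is true only on the principal set.
\item When $\dim K=1$, $K^0$ does not act on $T_xT^2$ by nontrivial rotations. It fixes the orbit's tangent vector, so it acts trivially on $T_xT^2$, and Lemma~\ref{lem:isometry_fixed_point_trivial_differential} disposes of it at once. That is precisely the paper's argument, and following it would have made the principal orbit type theorem unnecessary.
\end{itemize}

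Your fallback ``dimension argument'' also needs one more ingredient. On the maximal-rank locus, the stabilizer Lie algebras are rational subspaces varying continuously, hence locally constant. Only with this does $K^0$ fix an open set, so that Lemma~\ref{lem:isometry_fixed_point_trivial_differential} and faithfulness finish the argument.
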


\begin{proof}
    We first show that any faithful action of $\TT^2$ on $T^2$ is free. We start by observing that there exists a Riemannian metric $g$ on $T^2$ such that $\TT^2$ acts isometrically: one may take $g$ to be the average of an arbitrary Riemannian metric on $T^2$ by the Haar measure of $\TT^2$. Therefore, we may without loss of generality assume that $\TT^2$ acts isometrically on $(T^2, g)$. 
    
    For all $x \in T^2$, let $S_x < \TT^2$ be the stabilizer subgroup of $x$. Note that $S_x$ is always a closed Lie subgroup of $\TT^2$. Denote by $\Orb(x) \subset T^2$ the orbit of $x$ under the action of $\TT^2$. By the orbit-stabilizer theorem for smooth Lie group actions, we have $\Orb(x) \cong \TT^2 / S_x$. In particular, since $\TT^2$ is abelian, $S_x$ acts trivially on $\Orb(x)$. We show that $S_x = \{e\}$ and $\Orb(x) = T^2$. 

    We first consider the case where $\dim S_x = 2$. In this case, $S_x$ is an open Lie subgroup of $\TT^2$. Since $\TT^2$ is connected, we must have $S_x = \TT^2$. It follows that $\TT^2$ fixes $x$. Also, the image of $S_x$ under the differential map $d_x: S_x \rightarrow \GL(T_xT^2)$ given by 
    \begin{equation} \label{eqn:differential_map}
        d_x(\phi) = \left( d\phi_x: T_xT^2 \rightarrow T_xT^2 \right) \in \GL(T_xT^2)
    \end{equation}
    must be contained in a maximal compact subgroup of $\GL(T_xT^2) \cong \GL(2, \RR)$, which is 1-dimensional. It follows that $\ker(d_x) < \TT^2$ is nontrivial, so by Corollary \ref{cor:isometry_fixed_point_trivial_differential}, $\ker(d_x)$ acts trivially on $T^2$, which contradicts faithfulness of the $\TT^2$ action. 

    Next, we consider the case where $\dim S_x = 1$. In this case, we have $S_x \cong \SO(2) \times \ZZ/n\ZZ$. By the orbit-stabilizer theorem for smooth Lie group actions, we get that the orbit $\Orb(x) \cong \TT^2 / S_x \cong S^1$, so $\Orb(x)$ is a smoothly embedded simple closed curve in $T^2$. Now, let $d_x: S_x \rightarrow \GL(T_xT^2)$ be the differential map as in Equation \eqref{eqn:differential_map}. Since $S_x$ acts trivially on $\Orb(x)$, each element of $S_x$ fixes the tangent line $T_x\Orb(x)$ pointwise. By picking a suitable basis for $T_xT^2$, we get that 
    \begin{equation*}
        d_x(\phi) = 
        \begin{pmatrix}
            1 & f_1(\phi) \\ 0 & f_2(\phi)
        \end{pmatrix}
    \end{equation*}
    where $f_1(\phi) \in \RR$ and $f_2(\phi) \in \RR^{\times}$. It follows that $d_x(S_x)$ lies in a Lie subgroup of $\GL(T_xT^2)$ that is isomorphic to $\RR \rtimes \RR^{\times}$. Since the maximal compact subgroup of $\RR \rtimes \RR^{\times}$ is finite, the kernel $K = \ker(d_x) < S_x < \TT^2$ is a nontrivial subgroup and acts trivially on $T^2$ by Corollary \ref{cor:isometry_fixed_point_trivial_differential}, which contradicts faithfulness of the $\TT^2$ action. 

    Finally, we consider the case where $\dim S_x = 0$. In this case, $S_x$ is a finite subgroup of $\TT^2$, so $S_x \cong \ZZ/m\ZZ \times \ZZ/n\ZZ$ for some positive integers $m,n$. Again, by the orbit-stabilizer theorem for smooth Lie group actions, we get $\Orb(x) \cong \TT^2 / S_x \cong T^2$. We claim that $\Orb(x) = T^2$. Denote by $\Phi_x: \TT^2 \rightarrow T^2$ the orbit map sending 
    \begin{equation*}
        \Phi_x(\phi) = \phi(x).
    \end{equation*}
    Observe that $\Phi_x(\TT^2) = \Orb(x)$. Notice that $\Phi_x$ is closed, since the domain is compact and the codomain is Hausdorff. To see that $\Phi_x$ is open, we show that it is a local diffeomorphism. It suffices to show that $\Phi_x$ is a local diffeomorphism at $e$. Notice that $\ker d(\Phi_x)_e$ is precisely the Lie algebra $\ks_x$ of $S_x$, and from $\dim S_x = 0$ we get that $\dim \ker d(\Phi_x)_e = \dim \ks_x = 0$, $d(\Phi_x)_e$ is full-rank. 
    It follows that $\Phi_x$ is both a closed map and an open map, so$\Orb(x) = \Phi_x(\TT^2)$ is clopen in $T^2$, which gives us $\Orb(x) = T^2$. Finally, since every element of $S_x$ acts trivially on $\Orb(x) = T^2$, the faithfulness of the $\TT^2$ action gives us $S_x = \{e\}$. We conclude that $\TT^2$ must act freely and transitively. 
\end{proof}

Next, we prove the following properties of torus subgroups, which will become useful in the proof of Proposition \ref{prop:isometric_finite_group_free_action_on_tori}.

\medskip

\begin{lemma} \label{lem:properties_of_diff_of_torus}
    The following statements are true:
    \begin{enumerate}
        \item[\textnormal{(a)}] Every pair of torus subgroups of $\Diff_0(T^2)$ is conjugate in $\Diff_0(T^2)$. That is, given two torus subgroups $[\rho_1], [\rho_2]$, there exist representatives $\rho_1 \in [\rho_1]$, $\rho_2 \in [\rho_2]$, and $f \in \Diff_0(T^2)$ such that
        \begin{equation*}
            \rho_2(t) = f \circ \rho_1(t) \circ f^{-1} \quad \text{for all $t \in \TT^2$.}
        \end{equation*}

        \item[\textnormal{(b)}] Every finite subgroup of $\Diff_0(T^2)$ factors through a torus subgroup. That is, given any representation $\phi: G \hookrightarrow \Diff_0(T^2)$ of a finite group $G$, there exist $\phi': G \hookrightarrow \TT^2$ and $\rho: \TT^2 \hookrightarrow \Diff_0(T^2)$ such that the following diagram commutes:
        \[\begin{tikzcd}
            G && {\Diff_0(T^2)} \\
            & \TT^2
            \arrow["\phi", from=1-1, to=1-3]
            \arrow["{\phi'}"', from=1-1, to=2-2]
            \arrow["\rho"', from=2-2, to=1-3]
        \end{tikzcd}\]

        \item[\textnormal{(c)}] Let $g$ be a Riemannian metric on $T^2$. Then $g$ is flat if and only if there exists a torus subgroup $\rho: \TT^2 \hookrightarrow \Diff_0(T^2)$ that factors through $\Isom(T^2, g)$, meaning that there exists a map $\TT^2 \hookrightarrow \Isom(T^2, g)$ such that the following diagram commutes:
        % https://q.uiver.app/#q=WzAsNCxbMCwwLCJcXFRUIl0sWzEsMCwiXFxEaWZmXzAoVF4yKSJdLFsxLDEsIlxcRGlmZihUXjIpIl0sWzAsMSwiXFxJc29tKFReMiwgZykiXSxbMCwxLCJcXHJobyJdLFsxLDJdLFswLDMsIlxcZXhpc3RzIiwyXSxbMywyXV0=
        \[\begin{tikzcd}
            \TT^2 & {\Diff_0(T^2)} \\
            {\Isom(T^2, g)} & {\Diff(T^2)}
            \arrow["\rho", from=1-1, to=1-2]
            \arrow["\exists"', from=1-1, to=2-1]
            \arrow[from=1-2, to=2-2]
            \arrow[from=2-1, to=2-2]
        \end{tikzcd}\]
        Here, $\Diff_0(T^2) \rightarrow \Diff(T^2)$ is the natural inclusion, and $\Isom(T^2, g) \rightarrow \Diff(T^2)$ is the natural forgetful map.
        In particular, if $\rho$ factors through $\Isom(T^2, g)$, then $\rho \circ \alpha(A)$ also factors through $\Isom(T^2, g)$ for all $A \in \GL(2, \ZZ)$.
    \end{enumerate}
\end{lemma}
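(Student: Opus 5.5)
The common tool for all three parts is Lemma~\ref{lem:faithful_torus_action_is_free_transitive}. Any faithful smooth action $\rho$ of $\TT^2$ on $T^2$ is free and transitive. Hence for a basepoint $x_0$ the orbit map $\Phi^\rho_{x_0}\colon \TT^2 \to T^2$, $t \mapsto \rho(t)(x_0)$, is a bijective local diffeomorphism, hence a diffeomorphism. It intertwines left translation on $\TT^2$ with $\rho$:
\begin{equation*}
\rho(t) = \Phi^\rho_{x_0} \circ L_t \circ (\Phi^\rho_{x_0})^{-1}.
\end{equation*}
So every torus subgroup is the translation group of $T^2$ transported by a diffeomorphism $\TT^2 \to T^2$.

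\textbf{Part (a).} Given $\rho_1, \rho_2$, I would set $f_0 = \Phi^{\rho_2}_{x_0} \circ (\Phi^{\rho_1}_{x_0})^{-1}$. Then $f_0 \rho_1(t) f_0^{-1} = \rho_2(t)$ for all $t$, but $f_0$ need not be isotopic to the identity. Its action on $H_1(T^2;\ZZ)$ is some $A \in \GL(2,\ZZ)$. I would correct this by precomposing $\Phi^{\rho_1}_{x_0}$ with the automorphism $\alpha(A')$ of $\TT^2$ for a suitable $A'$, using $\Phi^{\rho_1\circ\alpha(A')}_{x_0} = \Phi^{\rho_1}_{x_0}\circ\alpha(A')$. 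Since $\alpha(A')$ realizes $A'$ on $H_1(\TT^2)$, this changes the homology action of $f$ by $A'^{-1}$. Choosing $A'$ appropriately makes $f_*=\Id$. Because $\pi_0\Diff(T^2)\cong\GL(2,\ZZ)$, it follows that $f\in\Diff_0(T^2)$.

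\textbf{Part (c).} For ($\Leftarrow$), suppose $\TT^2$ acts by $g$-isometries, freely and transitively. Then $(T^2,g)$ is homogeneous, so its curvature is constant. By Gauss--Bonnet, $\chi(T^2)=0$ forces that constant to be $0$, so $g$ is flat. For ($\Rightarrow$), write $(T^2,g)\cong \RR^2/\Lambda$. The translations $\RR^2/\Lambda\cong\TT^2$ act by isometries and are isotopic to the identity, which gives $\rho$. The ``in particular'' clause is immediate, since $\rho\circ\alpha(A)$ has the same image as $\rho$.

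\textbf{Part (b).} This is the main obstacle. The plan is as follows.
\begin{itemize}
\item[(1)] Average a metric over the finite group $G$ to make $G$ act by isometries of some metric $h$.
\item[(2)] Replace $h$ by a $G$-invariant flat metric. By uniformization, $h$ is conformal to a unique flat metric of area one. This flat metric is $G$-invariant because $G$ acts by conformal maps of $h$ and the normalized flat representative in a conformal class is unique.
\item[(3)] Each element of $G$ is now an isometry of the flat torus $\RR^2/\Lambda$, so it has the form $x\mapsto Bx+v$. It is isotopic to the identity, so $B$ acts trivially on $\Lambda$, giving $B=I$. Hence every element of $G$ is a translation.
\item[(4)] Conclude that $G$ lies in the translation group $\rho(\TT^2)$ from (c), with $\phi'=\rho^{-1}\circ\phi$.
\end{itemize}
The delicate point is the uniformization step (2) with its equivariance. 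An alternative route would invoke the classification of free $\Diff_0$-actions via the quotient torus.
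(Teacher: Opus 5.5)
Your proposal is correct. Parts (a) and (c) follow the paper's argument. In (a) the paper also uses the orbit maps $\Phi_i$ from Lemma~\ref{lem:faithful_torus_action_is_free_transitive} and corrects by $\alpha(A)$ with $A=A_1^{-1}A_2$ so that $f_*=\Id$ on $H_1$. In (c) it uses translations of a flat torus in one direction, and homogeneity plus Gauss--Bonnet in the other.

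Part (b) takes a genuinely different route. The paper argues topologically:
\begin{itemize}
\item it asserts that because $G\subset\Diff_0(T^2)$ acts trivially on $H_1$, every nontrivial element acts freely, so $T^2\to T^2/G$ is a covering of a torus;
\item it puts a Lie group structure on $T^2/G$ and lifts it to $T^2$, so that the covering becomes a homomorphism;
\item it observes that the deck transformations, i.e.\ $G$, are then left translations by the kernel.
\end{itemize}
You instead average a metric and pass to the unique area-one flat metric in its conformal class. Uniqueness holds because if $e^{2u}h_0$ and $h_0$ are both flat then $u$ is harmonic, hence constant. This gives a $G$-invariant flat metric, and you then kill the linear part $B$ of each isometry $x\mapsto Bx+v$ using $B|_\Lambda=\phi_*=\Id$.

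Your route costs the uniformization theorem for tori, but it gets freeness for free, since nontrivial translations have no fixed points. The paper's step ``acts trivially on $H_1$, so acts freely'' is stated without justification and actually needs an extra argument. One option is that the Lefschetz number is $0$ while each isolated fixed point of a periodic orientation-preserving map has index $+1$. Another is precisely your flat-metric argument. With your approach, neither freeness nor the identification of $T^2/G$ as a torus ever has to be established separately. The paper's route avoids analysis but relies on covering-group theory and on that unproved freeness claim.
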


\begin{proof}
    For (a), we start with arbitrary representatives $\rho_i: \TT^2 \hookrightarrow \Diff_0(T^2)$ of the torus subgroups $[\rho_i]$ for $i = 1, 2$. Fix a point $p \in T^2$. Since each $\rho_i$ is a faithful representation, each action of $\TT^2$ on $T^2$ induced by $\rho_i$ is free and transitive. This implies the orbit map 
    \begin{equation*}
        \Phi_i : \TT^2 \rightarrow T^2,
        \qquad
        \Phi_i(g) = \rho_i(g)(p)
    \end{equation*}
    are diffeomorphims. Fix a basis $(a, b)$ of $H_1(T^2)$, and let $(a', b')$ be a basis of $H_1(\TT^2)$ generated by the coordinate directions of $\SO(2)^2$. Let $A_i \in \GL(2, \ZZ)$ be the matrix of the induced homomorphism $(\Phi_i)_*: H_1(\TT^2) \rightarrow H_1(T^2)$. Set 
    \begin{equation*}
        A = A_1^{-1} A_2 \in \GL(2, \ZZ)
        \quad \text{and} \quad
        f = \Phi_2 \circ \alpha(A^{-1}) \circ \Phi_1^{-1}.
    \end{equation*}
    By construction, $(\Phi_2)_* \circ A^{-1} \circ (\Phi_1)_*^{-1} = \Id$, so $f_* = \Id$ on $H_1(T^2)$, hence $f \in \Diff_0(T^2)$. 
    
    It then remains to show that for all $g \in \TT^2$, we have
    \begin{equation*}
        f \circ (\rho_1 \circ \alpha(A))(g) \circ f^{-1} = \rho_2(g).
    \end{equation*}
    Given any $q \in T^2$, we let $g' \in \TT^2$ satisfy $\rho_2(g')(p) = q$, so $\Phi_2^{-1}(q) = g'$. This gives us
    \begin{equation*}
        f^{-1}(q) 
        = (\Phi_1 \circ \alpha(A) \circ \Phi_2^{-1}) (q)
        = \Phi_1 (\alpha(A)(g')) 
        = (\rho_1 \circ \alpha(A)) (g') (p).
    \end{equation*}
    It follows that 
    \begin{equation*}
        \begin{aligned}
            (f \circ (\rho_1 \circ \alpha(A))(g) \circ f^{-1}) (q)
            &= (f \circ (\rho_1 \circ \alpha(A))(g + g')) (p)
            = \Phi_2 \circ \alpha(A^{-1}) (\alpha(A)(g + g')) \\
            &= \Phi_2(g + g')
            = \rho_2(g + g') (p)
            = \rho_2(g)(q).
        \end{aligned}
    \end{equation*}
    This completes the proof of (a).

    For (b), since $G$ is a finite subgroup of $\Diff_0(T^2)$, it preserves the orientation of $T^2$. Moreover, $G$ acts trivially on $H_1(T^2)$, so every nontrivial element of $G$ acts freely on $T^2$. Hence the quotient $T^2/G$ is again homeomorphic to $T^2$.

    We equip the quotient $T^2/G$ with a Lie group structure $\TT'$. This Lie group structure induces a Lie group structure $\TT$ on $T^2$ such that the quotient map $T^2 \rightarrow T^2/G$ is a Lie group homomorphism $\TT \rightarrow \TT'$. It follows that the deck transformations $\Deck(T^2 \rightarrow T^2/G) \cong G$ act by left multiplication with respect to the Lie group structure $\TT$ on $T^2$.

    Now consider the homomorphism $\rho: \TT \rightarrow \Diff_0(T^2)$ given by left multiplication $\rho(g)(h) = g \cdot h$. This is a faithful representation, and the preceding discussion shows that the action of $G$ factors through $\rho$, as desired.

    For (c), notice that if $g$ is a flat metric on $T^2$, then $\Isom(T^2, g)$ always contain a translation subgroup isomorphic to $\TT^2$. On the other hand, if $g$ is a left-invariant metric on $T^2$ under some transitive Lie group action, then by the Gauss--Bonnet theorem, $g$ must be a flat metric. The last remark of (c) follows from the fact that $(\rho \circ \alpha(A))(\TT^2) = \rho(\TT^2)$ for all $A \in \GL(2, \ZZ)$.
\end{proof}

\medskip

\begin{remark}
    Lemma \ref{lem:faithful_torus_action_is_free_transitive} could be avoided by defining torus subgroups to act freely and transitively. With this stronger condition in the definition, all parts of Lemma \ref{lem:properties_of_diff_of_torus} would still hold. We instead choose to only assume faithfulness and provide a self-contained proof of Lemma \ref{lem:faithful_torus_action_is_free_transitive}.
\end{remark}

\medskip

\begin{corollary} \label{cor:properties_of_diff_of_torus}
    If $g_1, \ldots, g_n$ are flat metrics on $T^2$, and $\rho: \TT^2 \hookrightarrow \Diff_0(T^2)$ factors through $\Isom(T^2, g_i)$ for every $i = 1, \ldots, n$, then $\rho$ factors through $\Isom(T^2, \bar g)$, where
    \begin{equation*}
        \bar g = \sum_{i=1}^n g_i.
    \end{equation*}
    In particular, the metric $\bar g$ is flat on $T^2$.
\end{corollary}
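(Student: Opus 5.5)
The plan is to show directly that every element of $\rho(\TT^2)$ is an isometry of $\bar g$, and then to invoke Lemma~\ref{lem:properties_of_diff_of_torus}(c) to conclude that $\bar g$ is flat.

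First, $\bar g$ is a Riemannian metric. It is a finite sum of smooth symmetric positive-definite $2$-tensors, and such a sum is again symmetric and positive definite at every point.

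Next, fix $t \in \TT^2$ and write $\psi = \rho(t)$. By hypothesis, for each $i$ there is a homomorphism $\TT^2 \to \Isom(T^2, g_i)$ that is compatible with $\rho$ under the forgetful map to $\Diff(T^2)$. Since the forgetful map is injective, this says exactly that $\psi$, as a diffeomorphism, satisfies $\psi^* g_i = g_i$ for every $i$. Pullback is linear on symmetric $2$-tensors, so
\begin{equation*}
    \psi^* \bar g = \sum_{i=1}^n \psi^* g_i = \sum_{i=1}^n g_i = \bar g .
\end{equation*}
Hence $\psi \in \Isom(T^2, \bar g)$. The assignment $t \mapsto \rho(t)$, now viewed as a map into $\Isom(T^2,\bar g)$, is a homomorphism because $\rho$ is one. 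It makes the diagram in Lemma~\ref{lem:properties_of_diff_of_torus}(c) commute by construction. Thus $\rho$ factors through $\Isom(T^2, \bar g)$.

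Finally, $\rho$ is a torus subgroup, i.e.\ a faithful representation $\TT^2 \hookrightarrow \Diff_0(T^2)$. The ``if'' direction of Lemma~\ref{lem:properties_of_diff_of_torus}(c) therefore gives that $\bar g$ is flat. Concretely, Lemma~\ref{lem:faithful_torus_action_is_free_transitive} says $\rho$ acts transitively, so $\bar g$ is homogeneous and has constant curvature, and Gauss--Bonnet on $T^2$ forces that constant to be $0$.

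There is no real obstacle here. The only point needing care is interpreting ``factors through $\Isom$'' as pointwise invariance $\rho(t)^* g_i = g_i$, which follows from the injectivity of the forgetful map.
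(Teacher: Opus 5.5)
Your proposal is correct and follows essentially the same route as the paper's proof. In both, each $\rho(t)$ preserves $\bar g$ by linearity of pullback, and flatness then follows from the ``if'' direction of Lemma~\ref{lem:properties_of_diff_of_torus}(c); your extra remarks (positive definiteness of $\bar g$, and the transitivity-plus-Gauss--Bonnet explanation via Lemma~\ref{lem:faithful_torus_action_is_free_transitive}) only spell out details the paper leaves implicit.
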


\begin{proof}
    For all $\phi \in \rho(\TT^2)$, we know that $\phi^* g_i = g_i$. This in particular implies
    \begin{equation*}
        \phi^* \bar g 
        = \phi^* \left( \sum_{i=1}^n g_i \right)
        = \sum_{i=1}^n \phi^* g_i 
        = \sum_{i=1}^n g_i 
        = \bar g.
    \end{equation*}
    Therefore, $\rho: \TT^2 \hookrightarrow \Diff_0(T^2)$ factors through $\Isom(T^2, \bar g)$. By Lemma \ref{lem:properties_of_diff_of_torus}(c), $\bar g$ is flat.
\end{proof}

We now proceed to prove Proposition \ref{prop:isometric_finite_group_free_action_on_tori}. The central idea behind the proof is the following. Suppose $X = \{x_1, \ldots, x_n\}$ is a finite set on which a finite group $G$ acts transitively, and suppose one would like to assign a structure $a_i$ to each $x_i \in X$ in a way that is compatible with the group action, but assigning all $a_i$ simultaneously is difficult. One begins by choosing a structure $a_1$ on $x_1$ that is compatible with the action of the stabilizer subgroup $S < G$ of $x_1$. Then, by the orbit--stabilizer theorem, $G$ can be decomposed into cosets $\phi_1 S, \ldots, \phi_n S$, where each $\phi_i \in G$ sends $x_1$ to $x_i$. One then defines a structure $a_i$ on $x_i$ by requiring that $\phi_i^* a_i$ and $a_1$ are compatible. The resulting collection $a_1, \ldots, a_n$ is compatible with the action of the whole group $G$ by purely group-theoretic considerations. Nevertheless, this construction is not ``natural'' in the sense that it depends on the choice of the basepoint $x_1$ and the choice of representatives $\phi_i$.

In the proof of Proposition \ref{prop:isometric_finite_group_free_action_on_tori}, the set $X$ will be the collection of torus components of $\Sigma$, the group $G$ will be a finite group acting freely on $\Sigma$, which acts transitively on $X$ since $\Sigma/G$ is connected, and the structures $a_i$ will be diffeomorphisms $f_i \in \Diff_0(T_i)$ such that the torus subgroups of the isometry groups of $f_i^* g|_{T_i}$ and $f_j^* g|_{T_j}$ are related by the $G$-action.

\begin{proof}[Proof of Proposition \ref{prop:isometric_finite_group_free_action_on_tori}]
    We write $\Sigma = T_1 \sqcup \cdots \sqcup T_n$. Consider the subgroup $S < G$ given by
    \begin{equation*}
        S := \{ \phi \in G \, | \, \phi(T_1) = T_1 \}.
    \end{equation*}
    Since $\Sigma / G$ is a single torus, we see that $G$ acts transitively on the collection of components $\{T_1, \ldots, T_n\}$, so we can choose $\phi_i \in G$ for each $i = 1, \ldots, n$ such that $\phi_i(T_1) = T_i$ (in particular, we may pick $\phi_1 = \Id$), and the left $S$-cosets of $G$ are precisely
    \begin{equation*}
        \phi_1S = S, 
        \phi_2 S, 
        \ldots, 
        \phi_n S.
    \end{equation*}
    Also, since $g|_{T_i}$ is flat on each torus $T_i$, Lemma \ref{lem:properties_of_diff_of_torus}(c) implies that for each $i = 1, \ldots, n$, there exists a torus subgroup
    \begin{equation*}
        \tau_i: \TT^2 \hookrightarrow \Diff_0(T_i)
    \end{equation*}
    that factors through $\Isom(T_i, g|_{T_i})$.

    We now consider the action of $S$ on $T_1$. Since $S$ is finite, Lemma \ref{lem:properties_of_diff_of_torus}(b) implies that $S \rightarrow \Diff_0(T_1)$ factors through a map $\rho_1: \TT^2 \hookrightarrow \Diff_0(T_1)$ as
    \begin{equation*}
        S \rightarrow \TT^2 \xrightarrow{\rho_1} \Diff_0(T_1).
    \end{equation*}
    By Lemma \ref{lem:properties_of_diff_of_torus}(a), after possibly replacing $\tau_1$ with $\tau_1 \circ \alpha(A)$ for some $A \in \GL(2, \ZZ)$, there exists $f_1 \in \Diff_0(T_1)$ such that
    \begin{equation*}
        \rho_1 = f_1 \circ \tau_1 \circ f_1^{-1}.
    \end{equation*}
    This in particular implies that $\rho_1$ factors through $\Isom(T_1, f_1^*(g|_{T_1}))$, so the action of $S$ restricted to $T_1$ is isometric with respect to the flat metric $f_1^* (g|_{T_1})$.

    Next, we take care of the remaining pieces $T_i$. Consider the torus subgroup
    \begin{equation*}
        \rho_i := \phi_i \circ \rho_1 \circ \phi_i^{-1}: \TT^2 \hookrightarrow \Diff_0(T_i).
    \end{equation*}
    Again, by Lemma \ref{lem:properties_of_diff_of_torus}(a), after replacing $\tau_i$ by $\tau_i \circ \alpha(A_i)$ for some $A_i \in \GL(2, \ZZ)$ if necessary, we can find $f_i \in \Diff_0(T_i)$ such that
    \begin{equation*}
        \rho_i = f_i \circ \tau_i \circ f_i^{-1}.
    \end{equation*}
    In particular, the torus subgroup $\rho_i$ factors through $\Isom(T_i, f_i^*(g|_{T_i}))$. Equivalently, $\rho_1$ factors through
    \begin{equation*}
        \Isom(T_1, \phi_i^* f_i^* (g|_{T_i})) \quad \text{for all $i = 1, \ldots, n$,}
    \end{equation*}
    so Lemma \ref{lem:properties_of_diff_of_torus}(b) implies that $S$ acts by isometries on $\phi_i^* f_i^* (g|_{T_i})$.

    We now define $f \in \Diff_0(\Sigma)$ by
    \begin{equation*}
        f|_{T_i} = f_i \quad \text{for all $i = 1, \ldots, n$.}
    \end{equation*}
    We show that $f$ is the desired map in Proposition \ref{prop:isometric_finite_group_free_action_on_tori}. We first show that the torus subgroup $\rho_1: \TT^2 \hookrightarrow \Diff_0(T_1)$ factors through
    \begin{equation*}
        \Isom(T_1, (\phi^* f^* g)|_{T_1}) \quad \text{for all $\phi \in G$.}
    \end{equation*}
    Write $\phi = \phi_i \circ s$ for some $i = 1, \ldots, n$ and $s \in S$. Since $S$ acts on $(\phi_i^*f^*g)|_{T_1} = \phi_i^*f_i^*(g|_{T_i})$ by isometries, we have
    \begin{equation*}
        (\phi^* f^*g)|_{T_1}
        = (s^* \phi_i^* f^* g)|_{T_1}
        = (\phi_i^* f^* g)|_{T_1}
        = \phi_i^* f_i^* (g|_{T_i}).
    \end{equation*}
    Since we have already shown that $\rho_1$ factors through $\Isom(T_1, \phi_i^* f_i^* (g|_{T_i}))$, it must also factor through $\Isom(T_1, (\phi^* f^* g)|_{T_1})$. In particular, this implies that $\rho_i = \phi_i \circ \rho_1 \circ \phi_i^{-1}$ factors through
    \begin{equation*}
        \Isom(T_i, ((\phi_i^{-1})^* \phi^* f^* g)|_{T_i}) \quad \text{for all $\phi \in G$ and $i = 1, \ldots, n$.}
    \end{equation*}
    Since every $\phi \in G$ can be written as $\psi \circ \phi_i^{-1}$ for some $\psi \in G$, we see that for all $i = 1, \ldots, n$ and $\phi \in G$, the torus subgroup $\rho_i$ factors through $\Isom(T_i, (\phi^* f^* g)|_{T_i})$. Applying Corollary \ref{cor:properties_of_diff_of_torus} to each $T_i$ with the flat metrics $(\phi^* f^* g)|_{T_i}$ for each $\phi \in G$ and the torus subgroup $\rho_i: \TT^2 \hookrightarrow \Diff_0(T_i)$, we see that the metric $\bar g|_{T_i}$ from Equation \eqref{eqn:sum_metric} is flat.

    Finally, Equation \eqref{eqn:inner_product_and_sum_metric} holds because $f|_{T_i} = f_i$ is chosen from $\Diff_0(T_i)$, and pulling back by $\Diff_0(T_i)$ does not change the metric-induced inner product on $H^1(T_i)$.
\end{proof}

\printbibliography

\end{document}